\documentclass[11pt,leqno]{article}
\usepackage{amssymb,amsthm,amsmath,amscd,amsfonts,bbm,mathrsfs,fullpage, comment}
\usepackage[all]{xy}

 \def\noproof{{\unskip\nobreak\hfill\penalty50\hskip2em\hbox{}%
      \nobreak\hfill$\Box$\parfillskip=0pt%
     \finalhyphendemerits=0\par}}
\def\enddemo{\ifmmode\eqno\Box\else\noproof\vskip0.8truecm\fi}


\newtheorem{theo}{Theorem}[section]

\newtheorem{df}[theo]{Definition}

\newtheorem{lemma/def}[theo]{Lemma/Definition}

\newtheorem{prop}[theo]{Proposition}

\newtheorem{remark}[theo]{Remark}
\newtheorem{remarks}[theo]{Remarks}

\newtheorem{lemma}[theo]{Lemma}

\newcommand{\lra}{\longrightarrow}
\newcommand{\lla}{\longleftarrow}
\newcommand{\hra}{\hookrightarrow}

\DeclareMathOperator{\Ban}{Ban}
\DeclareMathOperator{\Spec}{Spec}

\DeclareMathOperator{\Ind}{Ind}
\DeclareMathOperator{\cInd}{c-Ind}
\DeclareMathOperator{\ab}{ab}

\DeclareMathOperator{\aug}{aug}
\DeclareMathOperator{\fgaug}{fg aug}

\DeclareMathOperator{\rec}{rec}

\DeclareMathOperator{\ord}{ord}

\DeclareMathOperator{\id}{id}

\DeclareMathOperator{\Hom}{Hom}

\DeclareMathOperator{\Ext}{Ext}

\DeclareMathOperator{\End}{End}
\DeclareMathOperator{\St}{St}

\DeclareMathOperator{\fl}{fl}
\DeclareMathOperator{\pr}{pr}

\DeclareMathOperator{\rank}{rank}

\DeclareMathOperator{\tor}{tor}
\DeclareMathOperator{\wcdot}{\, \cdot\, }

\DeclareMathOperator{\Ker}{Ker}

\DeclareMathOperator{\Image}{Im}
\DeclareMathOperator{\Jac}{Jac}

\DeclareMathOperator{\Maps}{Maps}

\DeclareMathOperator{\Frob}{Frob}

\DeclareMathOperator{\Tr}{Tr}

\DeclareMathOperator{\Gal}{Gal}
\DeclareMathOperator{\Norm}{N}
\DeclareMathOperator{\Nrd}{Nrd}
\DeclareMathOperator{\Mod}{Mod}

\DeclareMathOperator{\Ram}{Ram}

\DeclareMathOperator{\GL}{GL}

\DeclareMathOperator{\PGL}{PGL}

\DeclareMathOperator{\ad}{ad}

\DeclareMathOperator{\Ann}{Ann}

\DeclareMathOperator{\an}{an}
\DeclareMathOperator{\Stab}{Stab}

\DeclareMathOperator{\et}{et}

\DeclareMathOperator{\rig}{rig}

\DeclareMathOperator{\JL}{JL}

\DeclareMathOperator{\adm}{adm}

\DeclareMathOperator{\univ}{univ}

\DeclareMathOperator{\cont}{cont}

\DeclareMathOperator{\cl}{cl}

\DeclareMathOperator{\cycl}{cycl}

\DeclareMathOperator{\prolim}{\underset{\lla}{\lim}}
\DeclareMathOperator{\prolimn}{\underset{\underset{n}{\lla}}{\lim}}
\DeclareMathOperator{\prolimm}{\underset{\underset{m}{\lla}}{\lim}}

\newcommand{\io}{{\iota}}
\newcommand{\La}{{\Lambda}}
\newcommand{\la}{{\lambda}}

\newcommand{\fa}{{\mathfrak a}}

\newcommand{\fd}{{\mathfrak d}}

\newcommand{\ff}{{\mathfrak f}}

\newcommand{\fG}{{\mathfrak G}}

\newcommand{\fm}{{\mathfrak m}}

\newcommand{\fn}{{\mathfrak n}}
\newcommand{\fp}{{\mathfrak p}}
\newcommand{\fP}{{\mathfrak P}}
\newcommand{\fq}{{\mathfrak q}}

\newcommand{\fU}{{\mathfrak U}}
\newcommand{\fX}{{\mathfrak X}}

\DeclareMathOperator{\fOrd}{{\mathbb{O}rd}}

\newcommand{\bC}{{\mathbb C}}
\newcommand{\bD}{{\mathbb D}}
\newcommand{\bF}{{\mathbb F}}
\newcommand{\bG}{{\mathbb G}}
\newcommand{\bH}{{\mathbb H}}

\newcommand{\bP}{{\mathbb P}}
\newcommand{\bQ}{{\mathbb Q}}
\newcommand{\bR}{{\mathbb R}}
\newcommand{\bT}{{\mathbb T}}
\newcommand{\bZ}{{\mathbb Z}}
\newcommand{\barQ}{{\overline{\mathbb Q}}}

\newcommand{\bA}{{\mathbb A}}

\newcommand{\barbD}{{\overline{\bD}}}

\newcommand{\barD}{{\overline{D}}}

\newcommand{\barF}{{\overline{F}}}
\newcommand{\barG}{{\overline{G}}}
\newcommand{\barK}{{\overline{K}}}

\newcommand{\barT}{{\overline{T}}}

\newcommand{\barX}{{\overline{X}}}
\DeclareMathOperator{\barep}{\overline{\epsilon}}
\DeclareMathOperator{\barDelta}{\overline{\Delta}}

\newcommand{\cA}{{\mathcal A}}

\newcommand{\cD}{{\mathcal D}}

\newcommand{\cG}{{\mathcal G}}
\newcommand{\cH}{{\mathcal H}}

\newcommand{\cM}{{\mathcal M}}
\newcommand{\cN}{{\mathcal N}}
\newcommand{\cO}{{\mathcal O}}

\newcommand{\cR}{{\mathcal R}}

\newcommand{\cT}{{\mathcal T}}
\newcommand{\cV}{{\mathcal V}}

\newcommand{\sA}{{\mathscr A}}

\newcommand{\sE}{{\mathscr E}}

\newcommand{\sK}{{\mathscr K}}
\newcommand{\sL}{{\mathscr L}}

\newcommand{\sS}{{\mathscr S}}

\newcommand{\wsE}{{\widetilde{\mathscr E}}}

\newcommand{\wE}{\widetilde{E}}
\newcommand{\wG}{\widetilde{G}}

\newcommand{\wV}{\widetilde{V}}

\newcommand{\wcM}{{\widetilde{\mathcal M}}}
\newcommand{\wcO}{{\widetilde{\mathcal O}}}
\newcommand{\wcN}{{\widetilde{\mathcal N}}}

\newcommand{\wK}{\widetilde{K}}

\newcommand{\wchi}{\widetilde{\chi}}
\newcommand{\wpi}{\widetilde{\pi}}

\newcommand{\ep}{{\epsilon}}
\newcommand{\vep}{{\varepsilon}}

\newcommand{\bu}{{\bullet}}

\newcommand{\noi}{\noindent}

\begin{document}
\title{On $\sL$-invariants associated to Hilbert modular forms}

\author{By Michael Spie{\ss}}
\maketitle

\begin{abstract}
Given a cuspidal Hilbert modular eigenform $\pi$ of parallel weight 2 and a nonarchimedian place $\fp$ of the underlying totally real field such that the local component of $\pi$ at $\fp$ is the Steinberg representation, one can associate two types of $\sL$-invariants, one defined in terms of the cohomology of arithmetic groups and the other in terms of the Galois representation associated to $\pi$. We show that the $\sL$-invariants are the same.
\end{abstract}

\tableofcontents

\section{Introduction}

Let $F\subset \barQ$ be a totally real number field, let $D$ be a quaternion algebra over $F$ and let $G/F$ be the algebraic group $D^*/F^*$. Let $d$ be the number of archimedean places of $F$ where $D$ is unramified and let $\fp$ be a fixed 
nonarchimedean place of $F$ that is unramified in $D$ lying above a prime number $p$. Let $\pi =\bigoplus_v' \pi_v$ be a cuspidal automorphic representation of $G(\bA)$ (where $\bA$ denotes the adele ring of $F$) of parallel weight 2 \footnote{By that we mean that the Jacquet-Langlands transfer of $\pi$ to $\PGL_2(\bA)$ corresponds to a cuspidal Hilbert modular eigenform of parallel weight 2} such that the local component $\pi_{\fp}$ is the Steinberg representation of $G(F_{\fp}) \cong \PGL_2(F_{\fp})$. Moreover we fix a finite extension $E/\bQ_p$ with uniformizer $\varpi$ so that $\pi^{\infty} =\bigoplus_{v\nmid \infty}' \pi_v$ is defined over $E$. 

Generalizing a construction of Darmon \cite{darmon}, the author (in the case $D=M_2(F)$) and Gehrmann (for arbitrary $D$) introduced certain $p$-adic numbers called {\it automorphic $\sL$-invariants $\sL_{\ep}(\pi, \psi)$} of $\pi$ (see \cite{spiess1}, \cite{gehrmann1}). They are defined in terms of the group cohomology of $G(F)$. Here $\ep$ is a character of the group of connected components of $\pi_0(G(F\otimes \bR))\cong (\bZ/2\bZ)^d$ and $\psi$ is a continuous homomorphism from the multiplicative group of the local field $F_{\fp}^*$ to the additive group of $E$. 

On the other hand let $\rho_{\pi}: \Gal(\barQ/F)\to \GL_E(V_{\pi})$ be the Galois representation associated to $\pi$. The fact 
that $\pi_{\fp}$ is the Steinberg representation implies that $\rho_{\pi}$ is a direct summand of the Tate module of an abelian variety defined over $F$ with split multiplicative reduction at $\fp$. Hence one can associate an {\it arithmetic} (i.e.\ Mazur-Tate-Teitelbaum type) $\sL$-invariant $\sL(V_{\pi}, \psi)$ to $V_{\pi}$ and $\psi$. Our main result (see Theorem \ref{theo:linvmttautom}) is the equality between automorphic and arithmetic $\sL$-invariants
\begin{equation}
\label{lautarth}
\sL_{\ep}(\pi, \psi) \, =\, \sL(V_{\pi}, \psi).
\end{equation}
In particular $\sL_{\ep}(\pi, \psi)$ is independent of the the sign character $\ep$. For the history of this problem we refer to Remark \ref{remark:mainthm}.

Our proof consists of three steps. If $d=0$ (i.e.\ if $D$ is totally definite) the $\sL$-invariant $\sL_{\ep}(\pi, \psi)$ is essentially Teitelbaum's $\sL$-invariant \cite{teitelbaum} and the equality \eqref{lautarth} can be deduced from $p$-adic uniformization of Shimura curves due to \u{C}erednik (for $F=\bQ$) and Boutot-Zink (for arbitrary $F$). 

If $d=1$ our approach is novel and uses as an essential tool the cohomology theory of {\it $\sS$-varieties} introduced in \cite{spiess2}. It allows us to construct directly\footnote{i.e.\ without deforming $V_{\pi}$ into a $p$-adic family first}
infinitesimal deformations of the Galois representation $V_{\pi}$ involving the automorphic $\sL$-invariant. More precisely since both sides of \eqref{lautarth} are linear in $\psi$ and since both sides are $=1$ if $\psi= v_{\fp}: F_{\fp}^* \to \bZ\subseteq E$ is the normalized valuation at $\fp$ it suffices to see that the vanishing of the left hand side of \eqref{lautarth} implies the vanishing of the right hand side. We will see that the vanishing of \eqref{lautarth} is equivalent to the fact that
\[
\wV_{\pi} \, :=\, \bH_{\varpi-\ad}^1(X_{\barQ}^D; \wE(\Theta_{\psi}), E)_{\pi}
\]
is a non-trivial infinitesimal Galois deformation of $V_{\pi}$. We recall that given an admissible $E$-Banach space representation $W$ of the standard maximal torus $T_{\fp} \cong F_{\fp}^*$ of $\PGL_2(F_{\fp})$ we have constructed in \cite{spiess2} cohomology groups 
\[
\bH_{\varpi-\ad}^n(X^D_{\barQ}; W, E), \qquad n\ge 0
\]
associated to a certain $\sS$-curve $X=X^D$ (i.e.\ an $\fp^{\infty}$-tower of Shimura resp.\ modular curves equipped with a $\PGL_2(F_{\fp})$-action). These are $E$-vector spaces equipped with a continuous $\Gal(\barQ/F)$-action as well as a spherical Hecke action. So we can consider in particular the $E$-Banach representation 
$W= \wE(\Theta_{\psi})$ where $\wE=E[\vep]$ denote the dual numbers and $\Theta_{\psi}: F_{\fp}^*\to \wE$ the character $\Theta_{\psi}(x) = 1+ \psi(x) \vep$. In (\cite{spiess2}, \S 5.6 and 5.7),  we have shown that $\wV_{\pi}$ admits a two-step filtration and that the action of $\Gal(\barQ_p/F_{\fp})$ on the associated graded modules factors through the maximal abelian quotient $\Gal(\barQ_p/F_{\fp})^{\ab}$ and can be described explicitly. This enables us to follow the argument of Greenberg and Stevens  (\cite{grstevens}, proof of Thm.\ 3.14) to deduce $\sL(V_{\pi}, \psi)=0$. 

In the case $d>1$ we reduce the proof of \eqref{lautarth} to the case $d\le 1$ by proving a certain Jacquet-Langlands functoriality for $\sL_{\ep}(\pi, \psi)$. More precisely if $\barD$ denotes the quaternion algebra over $F$ which is ramified at the same nonarchimedean places as $D$ and at all infinite places except possibly one and if $\pi'$ denotes the Jacquet-Langlands transfer of $\pi$ to $\barG(\bA)$ (with $\barG = \barD^*/F^*)$ then we will show (see Theorem \ref{theo:linvjl})
\begin{equation}
\label{ljaclang}
\sL_{\ep}(\pi, \psi) = \sL(\pi', \psi).
\end{equation}
Let $\bar{d}\in \{0,1\}$ denote the number of archimedean places ramified in $\barD$.
As a key tool in proving \eqref{ljaclang} we study the cohomology groups 
\[
\bH_{\varpi-\ad}^d((X^D)^{\an}; \fOrd_{\varpi-\ad}^{\bar{d}}((X^{\barD})^{\an}, \cO)_E, E)
\]
The cohomology group $\fOrd_{\varpi-\ad}^{\bar{d}}((X^{\barD})^{\an}, \cO)_E$ has been introduced in (\cite{spiess2}, \S 4.1). It is an admissible $E$-Banach space representation of $T_{\fp}$. 

\section{Admissible Banach space representations and extensions of the Steinberg representations}
\label{section:borelind}

\paragraph{Notation and preliminary remarks} Let $R$ be a commutative noetherian ring and let $\cG$ be a locally profinite group. Recall that a left $R[\cG]$-module $W$ is called discrete (or smooth) if $W$ is discrete as a $\cG$-module, i.e.\ if the stabilizer $\Stab_{\cG}(w)=\{g\in \cG\mid gw=g\}$ is open in $\cG$ for every $w\in W$. A discrete $R[\cG]$-module $W$ is called admissible if $W^U$ is a finitely generated $R$-module for every open subgroup $U$ of $\cG$. 

Throughout this section $F$ denotes a $p$-adic field, i.e.\ a finite extension of $\bQ_p$. We let 
$v_F: F\to \bZ\cap\{+\infty\}$ denote the normalized valuation of $F$, $\cO_F$ its valuation ring, $\fp$ its valuation ideal and $U_F = \cO_F^*$ the group of units.

Let $G=\PGL_2(F) = \GL_2(F)/Z$ and let $\pr: \GL_2(F)\to G$ be the projection. Let $B$ be the standard Borel subgroup of $G$ and let $B = T N$ be the Levi decomposition of $B$, so $T$ consists of the diagonal matrices (modulo the center $Z$ of $\GL_2(F)$) and $N$ consists of the upper triangular matrices with $1$ as diagonal entries (mod $Z$). In the following we are often going to identify $T$ with the onedimensional split torus $F^*$ via the isomorphism 
\begin{equation}
\label{splittorus}
\delta: F^*\lra T, \,\, x\mapsto \delta(x)=\left(\begin{matrix} x & \\ & 1\end{matrix}\right) \mod Z
\end{equation}
and $N$ with the additive group $F$ via the isomorphism
\begin{equation}
\label{unipotent}
n: F\lra N, \,\, y\mapsto n(y)=\left(\begin{matrix} 1 & y\\  0 & 1\end{matrix}\right) \mod Z.
\end{equation}
so any $b\in B$ can be written uniquely in the form $b=\delta(x)\cdot n(y)$ with $x\in F^*$, $y\in F$. We denote by $T_0$ the maximal compact open subgroup of $T$, so $T_0$ corresponds to $U_F$ under \eqref{splittorus}. We let $N_0$ be the subgroup of $N$ hat corresponds to the $\cO_F$ under the isomorphism \eqref{unipotent}, i.e.\ $N_0=\{ n(y)\mid y\in \cO_F\}$ and put $T^+= \{t\in T\mid N_0^t= t N_0 t^{-1} \subseteq N_0\}$.

For an integer $n\ge 0$ we let $K(\fp^n) = \Ker(\GL_2(\cO_F) \to \GL_2(\cO_F/\fp^n))$ and define $K(n)$ to be the image of $K(\fp^n)$ under the projection  $\pr:\GL_2(F)\to G$. For a closed subgroup $H$ of $T^0$ we put 
\begin{equation}
\label{congruence3}
K_H(n) \, =\, K(n) H N_0.
\end{equation}
 
We fix another finite field extension $E/\barQ_p$ with valuation ring $\cO$, unifomizer $\varpi\in \cO$ and residue field $k=\cO/(\varpi)$. For $m\ge 1$ we put $\cO_m =\cO/(\varpi^m)$. More generally for an $\cO$-module $N$ we denote its torsion submodule by $N_{\tor}$ its the maximal torsionfree quotient by $N_{\fl}$ and we put $N_E= N\otimes_{\cO} E$. We also set $N_m= N\otimes_{\cO}\cO_m$ and $N[\varpi^m] = \Hom_{\cO}(\cO_{m}, N)$ for $m\ge 1$. Multiplication by 
$\varpi$ and $\varpi^m$ induces an exact sequence 
\begin{equation}
\label{kercoker}
0 \lra N[\varpi]\lra N[\varpi^{m+1}]\lra N[\varpi^m]\lra N_1\lra N_{m+1}\lra N_m \lra 0 
\end{equation} 
for every $m\ge 1$. 
In this section we will consider certain representation of $G$ and $T$ on $\cO$-modules or $E$-vector spaces. 

We briefly review the notion of $\varpi$-adically admissible representations and admissible Banach space representations of $T$ and the relation to finitely generated augmented $T$-modules (for further details we refer to \cite{spiess2}, \S 3.4). Then we are going to recall the construction of certain canonical extensions of the Steinberg representation.

\paragraph{$\varpi$-adically admissible representations of $T$} We fix a closed subgroup $H$ of $T^0$ and put $\barT = T/H$ and $\barT^0 = T^0/H$. We briefly review the notion of $\varpi$-adically continuous and $\varpi$-adically admissible $\cO[\barT]$-modules (\cite{emerton}, \S 2.4). An $\cO[\barT]$-module $W$ is called $\varpi$-adically continuous if (i) $W$ is $\varpi$-adically complete and separated, (ii) $W_{\tor}$ is of bounded exponent (i.e.\ $\varpi^m W=0$ for $m\ge 1$ sufficiently large) and (iii) $W_m$ is a discrete $\cO_m[\barT]$-module for every $m\ge 1$. A $\varpi$-adically admissible $\cO[\barT]$-module $W$ is a $\varpi$-adically continuous $\cO[\barT]$-module $W$ such that $W_1$ is an 
admissible $k[\barT]$-module. The exactness of the sequence \eqref{kercoker} implies that $W_m$ is an admissible $\cO_m[\barT]$-module for every $m\ge 1$. The full subcategory of the category of $\cO[\barT]$-modules consisting of $\varpi$-adically admissible $\cO[\barT]$-modules will be denoted by 
$\Mod_{\cO}^{\varpi-\adm}(\barT)$. It is an abelian category. 

We recall also recall the notion of an augmented $\cO[\barT]$-module (see \cite{emerton}, \S 2). 
For an open subgroup $U$ of $\barT$ we consider the $\cO$-algebra
\begin{equation*}
\label{iwasawa1}
\Lambda_{\cO}(U) \, =\, \prolim_{U'} \cO[U/U']
\end{equation*} 
where $U'$ runs over all open compact subgroups of $U$. If $U$ itself is compact then $\La_\cO(U)= \cO[\![U]\!]$ is the usual completed $\cO$-group algebra of $U$. Note that we have $\Lambda_{\cO}(\barT)  =\cO[\![\barT_0]\!][t_0^{\pm 1}]$
where $t_0\in T$ corresponds to a uniformizer in $F$ under \eqref{splittorus}.
In particular the ring $\Lambda_{\cO}(\barT)$ is noetherian.

A $\Lambda_{\cO}(\barT)$-module $L$ is called an {\it augmented $\cO[\barT]$-module}. The category of augmented $\cO[\barT]$-modules will be denote by $\Mod_{\cO}^{\aug}(\barT)$. An augmented $\cO[\barT]$-module $L$ is called finitely generated if $L$ is finitely generated as $\Lambda_{\cO}(U)$-modules (with respect to the canonical embedding $\Lambda_{\cO}(U)\hra \Lambda_{\cO}(\barT)$) 
for some (equivalently, any) compact open subgroup $U$ of $\barT$. The full subcategory of $\Mod_{\cO}^{\aug}(\barT)$ of finitely generated augmented ${\cO}[\barT]$-modules will be denoted by $\Mod_{\cO}^{\fgaug}(\barT)$. There exists a natural (profinite) topology on every objects $L$ of $\Mod_{\cO}^{\fgaug}(\barT)$ (see \cite{emerton}, Prop.\ 2.1.3) such that the action $\Lambda_{\cO}(\barT) \times L \to L$ is continuous. This topology is called the {\it canonical topology}.

\begin{remark}
\label{remark:padicadm}
\rm Let $\psi: F^*\to \cO$ be a continuous homomorphism (i.e.\ we have $\psi(xy) = \psi(x) + \psi(y)$ for all $x,y\in F^*$) and let
$\wcO = \cO[\vep] = \cO[X]/(X^2)$, $\vep := X + (X^2)$ be the $\cO$-algebra of dual numbers. The character 
\begin{equation}
\label{extstein1}
\Theta_{\psi}: F^*\lra \wcO^*,\,\, \Theta_{\psi}(x)=  1+ \psi(x)\ep.
\end{equation}
is again continuous. It induces an $F^*$- hence via \eqref{splittorus} also a $T$-action on $\wcO$. We denote the resulting $\cO[T]$-module by
$\wcO(\Theta_{\psi})$. It is clearly $\varpi$-adically admissible. Note that $\wcO(\Theta_{\psi})$ is an admissible $\cO[T]$-module if and only if $\Ker(\psi)$ is open in $F^*$ (this holds if and only if $\psi$ is multiple $c\cdot v_F$, $c\in \cO$ of the normalized valuation $v_F$ of $F$). \enddemo
\end{remark}

\paragraph{Banach space representations} Recall that an $E$-Banach space representation of $\barT$ is an $E$-Banach space $V=(V, \| \cdot \|)$ together with a continuous $E$-linear action $\barT\times V \to V, (t,v)\mapsto t\cdot v$. An $E$-Banach space representation $V$ of $\barT$ is called admissible if there exists an open and bounded $\cO[\barT]$-submodule $W\subseteq V$ such that the $U$-invariant $(W/V)^U$ of the quotient $V/W$ are an $\cO$-module of cofinite type for every open subgroup $U$ of $\barT$ (i.e.\ the Pontrjagin dual $\Hom((W/V)^U, E/\cO)$ is a finitely generated $\cO$-module).\footnote{Note that this condition implies that we can choose the norm $\|\wcdot \|$ on $V$ so that $(V, \| \cdot \|)$ is a unitary Banach space representation of $\barT$, i.e.\ we have $\|t\cdot v\|=\|v\|$ for every $t\in \barT$ and $v\in V$.}

The category of admissible $E$-Banach space representations of $\barT$ will be denoted by $\Ban_E^{\adm}(\barT)$. It is an abelian category. This can be easily deduced from the duality theorem below or from the fact that $\Ban_E^{\adm}(\barT)$ is equivalent to the localized category $\Mod_{\cO}^{\varpi-\adm}(\barT)_E$. Recall that for an $\cO$-linear additive category $\sA$ the $E$-linear additive category $\sA_E$ has the same objects as $\sA$ whereas the morphisms are given by 
$\Hom_{\sA_E}(A, B) = \Hom_{\sA}(A, B)\otimes_{\cO} E$. The functor
\begin{equation}
\label{locbanach2}
\Mod_{\cO}^{\varpi-\adm}(T)_E\lra \Ban_E^{\adm}(T), \quad W\mapsto (W_E, \| \wcdot\|)
\end{equation} 
is an equivalence of categories. Here for $W\in \Mod_{\cO}^{\varpi-\adm}(T)$ we let $\|\wcdot\|$ be the norm on $V=W_E$, so that  
$\,\Image(W\to V, w\mapsto w\otimes 1)$ is the unit ball $\{v\in V\mid \|v\|\le 1\}$ in $V$. 

Next we review the duality theorem. A $\Lambda_{\cO}(\barT)_E$-module $M$ will be called an augmented $E[\barT]$-module. Again, $\Mod_E^{\aug}(\barT)$ denotes the category of augmented $E[\barT]$-modules. An augmented $E[\barT]$-module $M$ is called finitely generated if there exists a $\La_{\cO}(\barT)$-submodule $L$ with $L\in \Mod_{\cO}^{\fgaug}(\barT)$ and $L_E = M$. We equip $M$ with the topology induced by the canonical topology on $L$, i.e.\ $M$ is a topological vector space and the inclusion is $L\hra M$ is open and continuous (hence $M$ is locally compact). 
As before $\Mod_E^{\fgaug}(\barT)$ denotes the full category of $\Mod_E^{\aug}(\barT)$ of finitely generated augmented $E[\barT]$-modules. 

For an $E$-Banach space $V=(V, \| \wcdot\|)$ we define $\cD(V) = V'$ to be the dual space equipped with the weak topology.
It follows immediately from (\cite{schneider-teitelbaum}, Thm.\ 3.5) that the functor
\begin{equation}
\label{pontrajagin4}
\cD: \Ban_E^{\adm}(\barT)\lra \Mod_E^{\fgaug}(\barT), \,\, V\mapsto \cD(V)= V'
\end{equation} 
is an anti-equivalence of $E$-linear abelian categories. Its quasi-inverse is given by
\begin{equation*}
\label{pontrajagin5}
\Mod_E^{\fgaug}(\barT) \lra  \Ban_E^{\adm}(\barT), \, \,\, M\mapsto \Hom_{E, \cont}(M, E).
\end{equation*} 
For $V\in \Ban_E^{\adm}(\barT)$ one can show that the $E[\barT]$-action on $V$ extends naturally to a $\La_{\cO}(\barT)_E$-action and that the functor \eqref{pontrajagin4} is $\La_{\cO}(\barT)_E$-linear.

\begin{remarks}
\label{remarks:locdualpply}
\rm (a) Let $V_1$, $V_2\in \Ban_E^{\adm}(\barT)$ and assume that $\dim_E(V_1) <\infty$. The duality theorem implies that the $E$-vector space $\Hom_{E[\barT]}(V_1, V_2)$ is finite-dimensional as well. In fact any $\barT$-equivariant homomorphism $V_1\to V_2$ is automatically continuous so we have 
\begin{equation}
\label{pontrajagin5a}
\Hom_{E[\barT]}(V_1, V_2)\, \cong \, \Hom_{\Mod_E^{\fgaug}(\barT)}(\cD(V_2), \cD(V_1)).
\end{equation}
If $U$ is a compact open subgroup of $\barT$ then the $\Lambda_{\cO}(U)_E$-module \eqref{pontrajagin5a} is finitely generated, hence it has finite length since $\cD(V_1)$ has finite length (because of $\dim_E(\cD(V_1))= \dim_E(V_1) <\infty$). It is therefore finite-dimensional as an $E$-vector space. 

\noi (b) Let $\psi: F^*\to E$ be a continuous character and let
$\wE = E[\vep]$ be the $E$-algebra of dual numbers. Since the image of $\psi$ is bounded in $E$ the image of the character
\begin{equation}
\label{extstein1a}
\Theta_{\psi}: F^*\lra \wE^*,\,\, \Theta_{\psi}(x)=  1+ \psi(x)\ep.
\end{equation}
is bounded in $\wE$. Therefore similar to Remark \ref{remark:padicadm} we obtain an admissible $E$-Banach space representation $\wE(\Theta_{\psi})$ of $T$. \enddemo
\end{remarks}

\paragraph{Extensions of the Steinberg representation} For a $\varpi$-adically continuous $\cO[T]$-module $W$ the $\varpi$-adically continuous parabolic induction of $W$ is defined by 
\begin{equation*}
\label{parindcont}
\Ind_{B, \cont}^G W \,= \, \{\Phi\in C_{\cont}(G, W)\mid \Phi(tng) = t\Phi(g) \,\,\forall t\in T, n\in N, g\in G\}
\end{equation*}
where $C_{\cont}(G, W)$ denotes the $\cO$-module of maps $G \to W$ that are continuous with respect to the $\varpi$-adic topology on $W$. The $G$-action on $\Ind_{B, \cont}^G W$ is induced by right 
multiplication. By (\cite{emerton}, Lemma 4.1.3) we have 
\begin{equation*}
\label{parindcont2}
\Ind_{B, \cont}^G W\,  = \, \prolimm \Ind_B^G W_m.
\end{equation*}
Moreover, for a $\varpi$-adically admissible $\cO[T]$-module $W$ the $\cO[G]$-module $\Ind_{B, \cont}^G W$ is $\varpi$-adically admissible as well and the functor 
\begin{equation*}
\label{parindcont3}
\Ind_{B, \cont}^G : \Mod_{\cO}^{\varpi-\adm}(T) \to \Mod_{\cO}^{\varpi-\adm}(G)
\end{equation*}
is exact (see \cite{emerton}, Prop.\ 4.1.5 and Prop.\ 4.1.7; the category $\Mod_{\cO}^{\varpi-\adm}(G)$ of $\varpi$-adically admissible $\cO[G]$-modules is defined in the same way as $\Mod_{\cO}^{\varpi-\adm}(T)$; see \cite{emerton}, Def.\ 2.4.7).

Recall that the Steinberg representation $\St_{G}(R)$ for any ring $R$ is the admissible $R[G]$-module defined by the short exact sequence 
\begin{equation*}
\label{steinberg}
\begin{CD}
0@>>> R(0) @>(\star)>> \Ind_{B}^G R(0) @>\pr >> \St_{G}(R) @>>> 0.
\end{CD}
\end{equation*}
Here $R(0)$ denotes $R$ with the trivial $G$-action and the map $(\star)$ is given by sending $x\in R$ to the constant map $G\to R, g\mapsto x$. We may also consider the continuous Steinberg representation $\St_{G, \cont}(\cO)$ defined by the sequence 
\begin{equation*}
\label{steinbergcont}
\begin{CD}
0@>>> \cO(0) @>(\star)>> \Ind_{B, \cont}^G \cO(0) @>\pr >> \St_{G, \cont} @>>> 0.
\end{CD}
\end{equation*}

\begin{remark}
\label{remark:steinbergcont}
\rm The continuous Steinberg representation is a $\varpi$-adically admissible $\cO[G]$-module. In fact we have $\St_{G, \cont}(\cO)=\prolimm \St_{G, \cont}(\cO)_m
$ and $\St_{G, \cont}(\cO)_m = \St_{G}(\cO_m)$ for all $m\ge 1$. \enddemo
\end{remark}

We now recall the construction of a certain extension of $\St_{G, \cont}$ associated to a continuous homomorphism $\psi: F^*\to \cO$ (see \cite{spiess1}, \S 3.7). Let $\wcO(\Theta_{\psi})$ be the $\varpi$-adically admissible
$\cO[T]$-module defined in Remark \ref{remark:padicadm}. Multiplication by $\vep$ induces a short exact sequence of $\varpi$-adically admissible $\cO[T]$-modules
\begin{equation*}
\label{extstein6}
0\lra \cO(0) \stackrel{\alpha}{\lra} \wcO(\Theta_{\psi})\stackrel{\beta}{\lra} \cO(0) \lra 0
\end{equation*}
(i.e.\ $\alpha\circ \beta$ is multiplication with $\vep$). By applying $\Ind_{B, \cont}^G$ we obtain a short exact sequence of $\varpi$-adically admissible $\cO[G]$-modules
\begin{equation}
\label{extstein7}
0\lra \Ind_{B, \cont}^G \cO(0)  \lra \Ind_{B, \cont}^G \wcO(\Theta_{\psi}) \lra \Ind_{B, \cont}^G \cO(0) \lra 0.
\end{equation}
By taking the pull-back of \eqref{extstein7} with respect to the map $(\star)$ (for $R=\cO$) we obtain a short exact sequence 
\begin{equation}
\label{extstein8}
0\lra \Ind_{B, \cont}^G \cO(0) \lra \wsE(\psi)  \lra \cO(0) \lra 0
\end{equation}
i.e.\ $\wsE(\psi)$ is the $\cO[G]$-submodule of $\Ind_{B, \cont}^G \wcO(\Theta_{\psi})$ given by $\Phi\in \Ind_{B, \cont}^G \wcO(\Theta_{\psi})$ so that $\Phi \!\!\mod \vep$ is a constant map $G(F) \to \cO$.

\begin{df}
\label{df:extstein}
The ($\varpi$-adically admissible) $\cO[G]$-module $\sE(\psi)$ is defined as the push-out of \eqref{extstein8} under the projection 
$\pr: \Ind_{B, \cont}^G \cO(0) \to \St_{G,\cont}(\cO)$ so that we have a short exact sequence of $\cO[G]$-modules
\begin{equation}
\label{extstein9}
0\lra \St_{G, \cont}(\cO) \lra \sE(\psi)  \lra \cO(0) \lra 0.
\end{equation}
\end{df}

We need the following Lemma.

\begin{lemma}
\label{lemma:extsteinfunc}
The map 
\begin{equation}
\label{extstein10}
\Hom_{\cont}(F^*, \cO) \lra \Ext_{\Mod_{\cO}^{\varpi-\adm}(G)}(\cO(0), \St_{G, \cont}(\cO)), \,\, \psi\mapsto [\sE(\psi)]_{\sim}
\end{equation}
is a homomorphism of $\cO$-modules.
\end{lemma}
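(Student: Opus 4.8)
The plan is to exhibit the map $\psi \mapsto [\sE(\psi)]_\sim$ as a composite of $\cO$-linear maps, reducing the linearity assertion to functoriality of constructions we already have in hand. First I would observe that the assignment $\psi \mapsto \wcO(\Theta_\psi)$, together with its canonical presentation as an extension $0 \to \cO(0) \xrightarrow{\alpha} \wcO(\Theta_\psi) \xrightarrow{\beta} \cO(0) \to 0$ of $\varpi$-adically admissible $\cO[T]$-modules, defines a class in $\Ext^1_{\Mod_\cO^{\varpi-\adm}(T)}(\cO(0),\cO(0))$, and that $\psi \mapsto [\wcO(\Theta_\psi)]_\sim$ is $\cO$-linear: indeed Baer sum of the extensions attached to $\psi_1,\psi_2$ is represented by the pull-back-push-out along the diagonal and codiagonal of $\wcO(\Theta_{\psi_1}) \oplus \wcO(\Theta_{\psi_2})$, and one checks directly on the level of the characters $\Theta_{\psi_i}(x) = 1 + \psi_i(x)\vep$ that this yields $\wcO(\Theta_{\psi_1+\psi_2})$ (the cocycle attached to $\Theta_\psi$, as a function on $T$ valued in $\cO(0)$, is literally $t = \delta(x) \mapsto \psi(x)$, which is additive in $\psi$); similarly scaling $\psi$ by $c \in \cO$ corresponds to the Yoneda action of $c$ on the Ext group. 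This is the only place where the additive structure of $\Hom_{\cont}(F^*,\cO)$ really enters.

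Next I would propagate this class through the three operations used in Definition \ref{df:extstein}. Applying the exact functor $\Ind^G_{B,\cont}$ sends an extension of $\cO[T]$-modules to the extension \eqref{extstein7} of $\cO[G]$-modules, so it induces an $\cO$-linear map $\Ext^1_{\Mod_\cO^{\varpi-\adm}(T)}(\cO(0),\cO(0)) \to \Ext^1_{\Mod_\cO^{\varpi-\adm}(G)}(\Ind^G_{B,\cont}\cO(0),\,\Ind^G_{B,\cont}\cO(0))$; here I use that a functor between abelian categories that is exact (or even just additive) induces a homomorphism on Yoneda $\Ext^1$, which holds because it carries Baer sums to Baer sums. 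Then pulling back along the map $(\star): \cO(0) \to \Ind^G_{B,\cont}\cO(0)$ in the second variable gives \eqref{extstein8} and is $\cO$-linear (pull-back in a fixed morphism is functorial and additive on $\Ext^1$), and finally pushing out along $\pr: \Ind^G_{B,\cont}\cO(0) \to \St_{G,\cont}(\cO)$ in the first variable gives \eqref{extstein9} and is again $\cO$-linear. Composing, $\psi \mapsto [\sE(\psi)]_\sim$ is a composite of $\cO$-module homomorphisms, hence an $\cO$-module homomorphism.

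There is a small bookkeeping point to address before the composite makes sense: the target in the statement is the group $\Ext_{\Mod_\cO^{\varpi-\adm}(G)}(\cO(0),\St_{G,\cont}(\cO))$, and I should note that Yoneda $\Ext^1$ is a well-defined abelian group in the abelian category $\Mod_\cO^{\varpi-\adm}(G)$ (which the excerpt records is abelian, as is $\Mod_\cO^{\varpi-\adm}(T)$), so all the intermediate $\Ext^1$ groups above are legitimate and all the transition maps are the standard ones. The only genuinely content-bearing step — and hence the main obstacle — is the very first one: verifying that $\psi \mapsto [\wcO(\Theta_\psi)]_\sim \in \Ext^1_{\Mod_\cO^{\varpi-\adm}(T)}(\cO(0),\cO(0))$ is additive, i.e.\ identifying the Baer sum of the two self-extensions coming from $\psi_1$ and $\psi_2$ with the self-extension coming from $\psi_1 + \psi_2$. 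This is where one must actually write down a $\cO$-module isomorphism $\wcO(\Theta_{\psi_1}) \oplus_{\cO(0)} \wcO(\Theta_{\psi_2}) \big/ \cO(0) \xrightarrow{\sim} \wcO(\Theta_{\psi_1+\psi_2})$ respecting the $T$-action, which comes down to the elementary identity $(1+\psi_1(x)\vep)(1+\psi_2(x)\vep) = 1 + (\psi_1(x)+\psi_2(x))\vep$ in $\wcO = \cO[\vep]$; once this is in place the rest of the argument is formal. Alternatively, and perhaps more cleanly, I would phrase the first step via the standard identification $\Ext^1_{\cO[T]\text{-type}}(\cO(0),\cO(0)) \cong H^1_{\cont}(T,\cO(0)) = \Hom_{\cont}(T,\cO) \cong \Hom_{\cont}(F^*,\cO)$, under which $[\wcO(\Theta_\psi)]_\sim$ corresponds to $\psi$ itself; the naturality of this identification makes the linearity immediate.
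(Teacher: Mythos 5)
Your argument is correct. The paper itself does not prove this lemma; it simply refers to Gehrmann's unpublished notes (\emph{Extensions of the Steinberg representation}, Lemma 1(c)) for the additivity, so there is no in-paper proof to compare against --- but your decomposition is exactly the natural one: $\cO$-linearity of $\psi\mapsto[\wcO(\Theta_{\psi})]$ in $\Ext^1$ of $\varpi$-adically admissible $\cO[T]$-modules (via the cocycle $\delta(x)\mapsto\psi(x)$, equivalently the identity $\Theta_{\psi_1}\Theta_{\psi_2}=\Theta_{\psi_1+\psi_2}$ in $\wcO^*$), followed by the three $\cO$-linear operations used in Definition \ref{df:extstein}: applying the exact functor $\Ind_{B,\cont}^G$, pulling back along $(\star)$, and pushing out along $\pr$. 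One small caveat: your parenthetical claim that a merely \emph{additive} functor induces a homomorphism on Yoneda $\Ext^1$ is not right as stated --- without exactness the image of a short exact sequence need not be a short exact sequence, so the induced map is not even defined; since $\Ind_{B,\cont}^G$ is exact (as the paper records, following Emerton), this does not affect your argument, but the exactness should be cited as the reason rather than mere additivity.
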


For the proof of the additivity of \eqref{extstein10} see (\cite{gehrstein}, Lemma 1 (c)).

\begin{remark}
\label{remark:extsteindisc}
\rm For $\psi= v_F$ the sequence \eqref{extstein6} consists of admissible $\cO[T]$-modules. Therefore we may apply usual parabolic induction so we obtain the sequence 
\begin{equation}
\label{extstein7a}
0\lra \Ind_{B}^G \cO(0)  \lra \Ind_{B}^G \wcO(\Theta_{\psi}) \lra \Ind_{B}^G \cO(0) \lra 0.
\end{equation}
By mimicking the construction of the extension \eqref{extstein9} we thus obtain a sequence of admissible $\cO[G]$-modules
\begin{equation}
\label{extstein9a}
0\lra \St_{G}(\cO) \lra \sE_0  \lra \cO(0) \lra 0.
\end{equation}
We remark that the $\cO[G]$-module $\sE_0$ admits the following resolution 
\begin{equation}
\label{extstein9b}
\begin{CD}
0 @>>> \cInd^{G}_{K(0)} \cO(0) @> T-(q+1)\id >> \cInd^{G}_{K(0)} \cO(0)@>>> \sE_0@>>> 0
\end{CD}
\end{equation}
This fact will be used in next section. Note that the sequence \eqref{extstein9} (for $\psi=v_F$) is the push-out of \eqref{extstein9a} with respect to the natural embedding $\St_G(\cO)\hra \St_{G, \cont}(\cO)$. \enddemo
\end{remark} 

\paragraph{Universal extension of the Steinberg representation} Following \cite{gehrstein, berggehr} the definition of the extension \eqref{extstein9} can be refined as follows. 
Let $A$ be a locally profinite abelian group and let $\psi: F^* \to A$ be a continuous homomorphism. There exists a canonical extension 
\begin{equation}
\label{extstein9c}
0\lra \St_{G, \cont}(A) \lra \sE'(\psi)  \lra \bZ(0) \lra 0
\end{equation}
associated to $\psi$ defined as follows. Let $\cR = \bZ+ \ep A$ be the topological ring of formal sums $m + \vep a$ with $m\in \bZ$, $a\in A$. Addition and multiplication is given by $(m_1 + \vep a_1) + (m_2 + \vep a_2)= (m_1 + m_2) + \vep(a_1 + a_2)$ and 
$(m_1 + \vep a_1) \cdot (m_2 + \vep a_2)= (m_1 \cdot m_2) + \vep(m_ 2 a_1 + m_1 a_2)$ respectively (the topology is the product topology with $\bZ$ being equipped with the discrete topology). Then $x\mapsto \Theta_{\psi}(x) =  1+ \psi(x)\ep$ can be viewed as a continuous homomorphism $\Theta_{\psi}: F^*\to \cR^*$, so as before we can consider the continuous parabolic induction $\Ind_{B, \cont}^G \cR[\Theta_{\psi}]$ of the $\cR[T]$-module $\cR[\Theta_{\psi}]$. By applying the functor $\Ind_{B, \cont}^G$ to the short exact sequence 
\begin{equation*}
\label{extstein9d}
\begin{CD}
0@>>> A(0) @> a \mapsto \vep a >> \cR[\Theta_{\psi}]@> m + \vep a \mapsto m >> \bZ(0) @>>> 0
\end{CD}
\end{equation*}
we obtain a short exact sequence of $G$-modules
\begin{equation*}
\label{extstein9f}
0\lra \Ind_{B, \cont}^G A(0)  \lra \Ind_{B, \cont}^G \cR(\Theta_{\psi}) \lra \Ind_B^G \bZ(0) \lra 0.
\end{equation*}
Again by pulling back this sequence with respect to the canonical inclusion $\bZ(0) \hra \Ind_B^G \bZ(0)$ and then taking the push-out with respect to the projection $\Ind_{B, \cont}^G A(0) \to \St_{G, \cont}(A) := \left(\Ind_{B, \cont}^G A(0)\right)/A(0)$ yields \eqref{extstein9c}. In the special case where $\psi$ is the identity $\id: F^* \to F^*$ we obtain the universal extension of the Steinberg representation
\begin{equation}
\label{extsteinuniv}
0\lra \St_{G, \cont}(F^*) \lra \sE_{\univ}  \lra \bZ(0) \lra 0
\end{equation}
(where $\sE_{\univ}:= \sE'(\id)$).

\section{Automorphic and arithmetic $\sL$-Invariants}
\label{section:linv}

In this section we mostly recall the definition of automorphic and  arithmetic $\sL$-invariants and state our main result. 
To begin with we introduce some notation. 

For the remainder of this paper $F$ denotes a totally real number field with ring of algebraic integers $\cO_F$. We fix a nonarchimedian place $\fp$ of $F$ lying above a prime number $p$. We denote by $\bA$ (resp.\ $\bA^{\fp}$, resp.\ $\bA_f$, resp.\ $\bA_f^{\fp}$) the ring of adeles of $F$ (resp.\ ring of prime-to-$\fp$, resp.\ finite, resp.\ finite prime-to-$\fp$ adeles). We let $S_{\infty}$ denote the set of archimedean places of $F$ and put $F_{\infty} = F\otimes_{\bQ} \bR$. For a nonarchimedean place $\fq$ of $F$ we let $\cO_{\fq}$ denote the valuation ring in $F_{\fq}$ and we put $U_{\fq}^{(0)} = U_{\fp} = \cO_{\fq}^*$ and $U_{\fq}^{(n)} =1 +\fq^n\cO_{\fq}$ and for $n\ge 1$. For a place $v$ of $F$ and an algebraic group $\cG/F$ we often write $\cG_v$ for $\cG(F_v)$.

Let $D$ be a quaternion algebra over $F$, let $\wG=D^*$ (viewed as an algebraic group over $F$), let $Z\cong \bG_m$ be the center of $\wG$ and put $G = \wG/Z$ (thus $G={\PGL_2}{_{/F}}$ if $D=M_2(F)$ is the algebra of $2\times 2$ matrices). We let $\Ram_D$ be the set of (archimedian or nonarchimedian) places of $F$ that are ramified in $D$. We assume that $\fp$ does not lie in $\Ram_D$ so that $G_{\fp} = \PGL_2(F_\fp)$. We denote by $B_{\fp}$ the standard Borel subgroup of $G_{\fp}$ and by $T_{\fp}$ its maximal torus. We denote by $\Sigma$ the set of archimedean places of $F$ that split $D$ (i.e.\ $\Sigma = S_{\infty}\setminus \Ram_D$) and we put $d=\sharp{\Sigma}$. If $D$ is not totally ramified then we choose an ordering $\Sigma = \{\sigma_1, \ldots, \sigma_d\}$ of the places in $\Sigma$. We let $\fd$ be the ideal of $\cO_F$ that is the product of the primes which are ramified in $D$, so that $\Ram_D = (S_{\infty}-\Sigma) \cup \{\fq\mid \fd\}$. 

For $v\in S_{\infty}$ we denote by $G_{v, +}$ the connected component of $1$ in $G_v$. Thus $G_{v, +}=(\wG_v)_+/Z_v$ where $(\wG_v)_+$ is the subgroup of elements $g\in \wG_v$ with $\Nrd(g) >0$. We also put $G(F)_+ = G(F) \cap \prod_{v\mid \infty} G_{v, +}$ and define 
\begin{equation*}
\label{pi0ginfty}
\Delta := \, G(F)/G(F)_+ \, \cong \,  \{\pm1\}^d.
\end{equation*}
A compact open subgroup $K_f^{\fp}$ of $G(\bA_f^{\fp})$ will be called a (prime-to-$\fp$) level.
We consider in particular levels of the form $K_0(\fn)^{\fp}$. We recall the definition. Let $\fn\ne (0)$ be an ideal of $\cO_F$ that is relatively prime to $\{\fp\} \cup \Ram_D$. Let $\cO_D$ be an Eichler order of level $\fn$ in $D$ (if $D= M_2(F)$ then we choose $\cO_D$ to be the subalgebra $M_0(\fn)\subseteq M_2(\cO_F)$ of matrices that are upper triangular modulo $\fn$).  For a nonarchimedean place $\fq$ of $F$ we put $\cO_{D, \fq} = \cO_D\otimes_{\cO_F} \cO_{\fq}$ and define $K_0(\fn)^{\fp}$ to be the image of 
\begin{equation}
\label{level}
\wK_0(\fn)^{\fp} \, =\, \prod_{\fq\nmid \fp \infty} \cO_{D, \fq}^*
\end{equation}
under the projection $\wG(\bA_f^{\fp})\to G(\bA_f^{\fp})$.

Throughout this section we consider a fixed cuspidal automorphic representation $\pi = \otimes_v \, \pi_v$ of $G(\bA_F)$ of parallel weight $2$, i.e.\ it has following properties
\medskip

\noi $-$ $\pi_v$ is the discrete series representation of $G_v$ of weight $2$ for all $v\in S_{\infty}-\Ram_D$,
\medskip

\noi $-$ $\pi_v$ is the trivial representation of $G_v$ for all $v\in S_{\infty}\cap \Ram_D$.
\medskip

\noi For simplicity we also assume that $\pi_v$ is a onedimensional representation of $G_v$ for all $v\in \Ram_D-S_{\infty}$.

\paragraph{Automorphic $\sL$-invariants} We put $\pi_f= \pi^{\infty} = \otimes_{v\nmid \infty} \, \pi_v$ and 
$\pi_0= \pi^{\fp\fd, \infty}=  \otimes_{v\nmid \fp\fd\infty} \, \pi_v$ and assume that its conductor is $\ff(\pi_0)= \fn$. In order to associate to $\pi$ automorphic and arithmetic $\sL$-invariants we need to assume that $\pi$ is of split multiplicative type at $\fp$, i.e.\ we assume that the following holds
\begin{equation}
\label{steinbergaut}
\pi_\fp \,\cong\, \St_{G_{\fp}}(\bC).
\end{equation}
This implies in particular that the conductor of $\pi$ is $\ff(\pi) = \fp\fn$. 

The $G(\bA_f)$-representation $\pi_f$ can be defined over an algebraic number field $\bQ_{\pi}\subseteq \barQ \subseteq \bC$, i.e.\ $\pi_f$ (and therefore also $\pi_0$) admits a $\bQ_{\pi}$-structure. The spherical (prime-to $p$) Hecke algebra $\bT = \bT^{S_0}=\bZ[T_{\fq}|\fq\nmid p\fd\fn]$ (where $S_0 = \{\fq \mid p\fd\fn\}$) acts on the onedimensional vector space $\pi_0^
{K_0(\fn)^{\fp}}$ via a ringhomomorphism $\la_{\pi}: \bT\lra \bC$ (the Hecke eigenvalue homomorphism)
whose image is contained in $\cO_{\bQ_{\pi}}$. We choose a place $\fP$ of $\bQ_{\pi}$ above $p$, or equivalently, we choose an embedding $\bQ_{\pi}\hra \bC_p$ and a subfield $E$ of $\bC_p$ that is finite over $\bQ_p$ and contains the image of $\bQ_{\pi}$ (we may choose $E$ to be the completion of $\bQ_{\pi}$ at $\fP$). Let $\cO$ be the valuation ring of $E$ and let $\varpi\in \cO$ be a prime element. Then $\la_{\pi}$ can be viewed as a ringhomomorphism
\begin{equation*}
\label{eigenvalue}
\la_{\pi}: \bT_{\cO}\lra \cO.
\end{equation*}
For a $\bT_{\cO}$-module $N$ we denote by $N_{\pi}$ the localization of $N$ with respect to $\ker(\la_{\pi})$. We will also consider $\bT$-modules $N$ with an additional $\Delta$-action. If $\ep:\Delta\to \{\pm \}$ is a character then the pair $(\la_{\pi}, \ep)$ defines a homomorphism $(\la_{\pi}, \ep): \bT[\Delta]\to \cO$ and we define $N_{\pi, \ep}$ to be the localization of $N$ with respect to its kernel. 

As in \cite{spiess1} for a ring $R$, a level $K_f^{\fp}$ and an $R[G_{\fp}]$-module $M$ we let $\cA_R(M, K_f^{\fp}; R)$ denote the $R[G(F)]$-module of maps $\Phi: M\times G(\bA_f^{\fp})/K_f^{\fp}  \to R$
that are homomorphism of $R$-modules in the first component. The $G(F)$-action is given by $(\gamma \Phi)(m, gK_f^{\fp}) \, =\, \Phi(\gamma^{-1} m, \gamma^{-1}gK_f^{\fp})$
for $\Phi\in \cA_R(M, K_f^{\fp}; R)$, $\gamma\in G(F)$, $m\in M$ and $g\in G(\bA_f^{\fp})$. The cohomology groups
\begin{equation}
\label{derham}
H^{\bu}(G(F)_+, \sA_R(M, K_f^{\fp}; R)) 
\end{equation}
where considered in \cite{spiess1}. Since $\cA_R(M, K_f^{\fp}; R)$ is an $R[G(F)]$-module the groups \eqref{derham} are naturally $R[\Delta]$-modules. There is also a canonical action of the Hecke algebra $R[G(\bA_f^{\fp})//K_f^{\fp}]$ on \eqref{derham} commuting with the $\Delta$-action. 

The automorphic $\sL$-invariants associated to $\pi$ are defined in terms of the cohomology groups
\begin{equation*}
\label{derham1}
H^n(G(F)_+, \cA_{\cO}(M, K_0(\fn)^{\fp}, \cO))_E
\end{equation*}
for $n=d$, $M =\St_{G_{\fp}, \cont}(\cO)$ and the extension classes \eqref{extstein9} (compare \cite{spiess1}, 6.1 and \cite{gehrmann1}, 2.5). They carry a canonical $\bT_E[\Delta]$-module structure. In the following we will abbreviate 
\begin{equation*}
\label{derham2}
\cH^n(M)\colon = H^n(G(F)_+, \cA_{\cO}(M, K_0(\fn)^{\fp}; \cO))_E.
\end{equation*}
We recall the connection to certain cohomology groups of $\sS$-spaces and $\sS$-varieties introduced 
in \cite{spiess2} (see Prop.\ 5.5 and Prop.\ 5.11 in loc.\ cit.). 
If $d\ge 0$ and if $W$ is an admissible $\cO[T_{\fp}]$-module then we have 
\begin{equation}
\label{derham3}
\cH^{\bu}(\Ind_{B_{\fp}}^{G_{\fp}} W)\, \cong \, \bH^{\bu}(X^{\an}; W, \cO)_E
\end{equation}
Furthermore, if $W$ is an $\varpi$-adically admissible $\cO[T_{\fp}]$-module and if $V=W_E$ is the associated Banach space representation of $T_{\fp}$ then
\begin{equation}
\label{derham4}
\cH^{\bu}(\Ind_{B_{\fp}, \cont}^{G_{\fp}} W)\, \cong \, \bH_{\varpi-\ad}^{\bu}(X^{\an}; V, E).
\end{equation}
where the cohomology groups on the right hand side of \eqref{derham3}, \eqref{derham4} have been introduced in (\cite{spiess2}, \S 4.1, 4.3). Here $X^{\an}$ denotes the (complex) quaternionic Hilbert modular $\sS$-manifold $(X_0^D(\fn)^{\fp})^{\an}$ in the sense of (\cite{spiess2}, \S 5.1). Moreover if $d\ge 1$ then we also have 
\begin{equation}
\label{derham5}
\cH^{\bu}(\Ind_{B_{\fp}, \cont}^{G_{\fp}} W)\, \cong \, \bH_{\varpi-\ad}^{\bu}(X_{\barQ}; V, E).
\end{equation}
where now $X= X_0^D(\fn)^{\fp}$ is quaternionic Hilbert modular $\sS$-variety. The cohomology groups $\bH^{\bu}(X^{\an}; W, \cO)_E$ and $\bH_{\varpi-\ad}^{\bu}(X^{\an}; V, E)$ are equipped with a $\Delta$-action and the isomorphism \eqref{derham3}, \eqref{derham4} are Hecke and $\Delta$-equivariant. The groups $\bH_{\varpi-\ad}^{\bu}(X_{\barQ}; V, E)$ are equipped with a Galois action.
If $W$ is an admissible $\cO[T_{\fp}]$-module that is free and of finite rank as an $\cO$-module then $V=W_E$ is an admissible Banach space representation of $T_{\fp}$ and we have (see \cite{spiess2}, \S 4.3)
\begin{equation}
\label{derham6}
\bH^{\bu}(X^{\an}; W, \cO)_E\, \cong \, \bH_{\varpi-\ad}^{\bu}(X_{\barQ}; V, E).
\end{equation}

Note that if $0 \to M_1 \stackrel{\alpha}{ \lra} M_2 \stackrel{\beta}{ \lra} M_3 \to 0$
is a short exact sequence of $\cO[G_{\fp}]$-modules that splits as a sequence of $\cO$-modules then the sequence of $G(F)$-modules
\[
0\lra \cA_{\cO}(M_3, K_0(\fn)^{\fp}; \cO)\lra \cA_{\cO}(M_2, K_0(\fn)^{\fp}; \cO)\lra \cA_{\cO}(M_1, K_0(\fn)^{\fp}; \cO)\lra 0
\]
is exact as well, so we obtain a long exact sequence 
\begin{equation}
\label{extdelta}
\ldots \lra \cH^n(M_3) \stackrel{\beta^*}{ \lra} \cH^n(M_2) \stackrel{\alpha^*}{ \lra} \cH^n(M_1)\stackrel{\delta^n}{\lra} \cH^{n+1}(M_3) \lra \ldots 
\end{equation}
of $\bT_E[\Delta]$-modules. For example the sequence \eqref{extstein9a} yields a long exact sequence
\begin{equation}
\label{extdeltaord}
\ldots \lra \cH^n(\cO(0)) \lra \cH^n(\sE_0) \lra \cH^n(\St_{G_{\fp}})\stackrel{\delta_0^n}{\lra} \cH^{n+1}(\cO(0)) \lra \ldots 
\end{equation}

\begin{prop}
\label{prop:harder}
\noi (a) We have 
\begin{equation}
\label{cohomautom1}
\cH^n(\St_{G_{\fp}, \cont}(\cO))_{\pi}\, =\, \cH^n(\St_{G_{\fp}}(\cO))_{\pi}
\, =\, \left\{\begin{array}{cc} E[\Delta] & \mbox{if $n=d$,}\\
0 &\mbox{otherwise.}\end{array}\right.
\end{equation}
for every $n\ge 0$. In particular we get 
\begin{equation*}
\label{cohomautom2}
\cH^n(\St_{G_{\fp}, \cont}(\cO))_{\pi, \ep}\, =\, \left\{\begin{array}{cc} E & \mbox{if $n=d$,}\\
0 &\mbox{otherwise.}
\end{array}\right.
\end{equation*}
for every character $\ep: \Delta\to \{\pm 1\}$ and $n\ge 0$.

\noi (b) The connecting homomorphisms in \eqref{extdeltaord} induces an isomorphisms
$(\delta_0)_{\pi}: \cH^n(\St_{G_{\fp}}(\cO))_{ \pi} \to \cH^{n+1}(\cO(0))_{\pi}$. Consequently, we have 
\begin{equation*}
\label{cohomautom3}
\cH^n(\cO(0))_{\pi, \ep}\, =\, \left\{\begin{array}{cc} E & \mbox{if $n=d+1$,}\\
0 &\mbox{otherwise}
\end{array}\right.
\end{equation*}
for every character $\ep: \Delta\to \{\pm 1\}$ and $n\ge 0$.

\noi (c) We have
\begin{equation*}
\label{cohomautom4}
\cH^n(\Ind_{B_{\fp}, \cont}^{G_{\fp}}\cO(0))_{\pi, \ep}\, =\, \cH^n(\Ind_{B_{\fp}}^{G_{\fp}}\cO(0))_{\pi, \ep}\, =\,\left\{\begin{array}{cc} E & \mbox{if $n=d, d+1$,}\\
0 &\mbox{otherwise}
\end{array}\right.
\end{equation*}
for every character $\ep: \Delta\to \{\pm 1\}$ and $n\ge 0$.
\end{prop}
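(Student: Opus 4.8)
The three parts are consequences of one computation — the cohomology of the arithmetic group $G(F)_+$ with coefficients in the automorphic spaces $\cA_\cO$ — localized at $\pi$, combined with the various long exact sequences already at hand. The starting point is part (a), and within it the identification $\cH^n(\St_{G_\fp,\cont}(\cO))_\pi = \cH^n(\St_{G_\fp}(\cO))_\pi$. Using Remark \ref{remark:steinbergcont} we have $\St_{G_\fp,\cont}(\cO) = \prolimm \St_{G_\fp}(\cO_m)$, and the functor $M\mapsto \cA_\cO(M,K_0(\fn)^\fp;\cO)$ together with group cohomology of $G(F)_+$ (which has finite cohomological dimension and finitely generated cohomology after localizing) commutes with this inverse limit up to the usual $\lim^1$ term; since localization at $\ker(\la_\pi)$ is exact and kills the relevant torsion (the cohomology is finitely generated over $\cO$ after localization, by the admissibility of $\St_{G_\fp}(\cO_m)$ and strong approximation / finiteness of class numbers for $G$), the $\lim^1$ vanishes and the two sides agree. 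So it suffices to compute $\cH^n(\St_{G_\fp}(\cO))_\pi$. For this I would invoke the comparison \eqref{derham3} with $W = \cO$ the trivial $T_\fp$-module — more precisely, $\St_{G_\fp}(\cO) = \Ind_{B_\fp}^{G_\fp}\cO(0)/\cO(0)$, so one gets $\cH^\bu(\St_{G_\fp}(\cO))$ sitting in the long exact sequence \eqref{extdelta} coming from $0\to\cO(0)\to\Ind_{B_\fp}^{G_\fp}\cO(0)\to\St_{G_\fp}(\cO)\to 0$. The cohomology $\cH^\bu(\Ind_{B_\fp}^{G_\fp}\cO(0))_\pi = \bH^\bu(X^{\an};\cO,\cO)_{E,\pi}$ is, after localizing at the cuspidal $\pi$, the $\pi$-part of the cohomology of the $\sS$-manifold with constant coefficients; by the standard argument (Matsushima/Franke, Eichler–Shimura for $d$ split archimedean places, i.e.\ the $\pi$-contribution concentrated in the middle degree for the associated symmetric space of dimension $d$, doubled by the $\Delta$-action) this is $E[\Delta]$ in degrees $d$ and $d+1$ and zero otherwise. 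This is the expected \emph{main obstacle}: pinning down exactly that the cuspidal localization of the $\sS$-manifold cohomology is a free rank-one $E[\Delta]$-module in the two consecutive degrees $d,d+1$ — this is where one uses that $\pi_\fp$ is Steinberg (so the relevant local factor at $\fp$ in the $\sS$-variety tower contributes a one-dimensional space) together with multiplicity one and the precise $\Delta\cong\{\pm1\}^d$-structure on $\bH^\bu$. Chasing the long exact sequence and using that $\cH^\bu(\cO(0))_\pi$ vanishes except in degree $d+1$ (which one checks by the same Franke/Borel argument: constant coefficients contribute the trivial automorphic representation, which is orthogonal to the cuspidal $\pi$, hence after $\pi$-localization only the boundary/Eisenstein contribution survives and it sits in the top degree) then forces $\cH^d(\St_{G_\fp}(\cO))_\pi = E[\Delta]$ and all other degrees zero. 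Applying the idempotent cutting out the $\ep$-eigenspace of $\Delta$ gives the displayed formula in (a).

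For part (b), one reads off the long exact sequence \eqref{extdeltaord} associated to \eqref{extstein9a}, namely
\[
\cdots\to\cH^n(\cO(0))_\pi\to\cH^n(\sE_0)_\pi\to\cH^n(\St_{G_\fp})_\pi\xrightarrow{\ (\delta_0^n)_\pi\ }\cH^{n+1}(\cO(0))_\pi\to\cdots.
\]
By part (a) and the vanishing statement for $\cH^\bu(\cO(0))_\pi$ just discussed, the only potentially nonzero terms are $\cH^d(\St_{G_\fp})_\pi\cong E[\Delta]$ and $\cH^{d+1}(\cO(0))_\pi\cong E[\Delta]$, in adjacent degrees. The crux is that $\cH^\bu(\sE_0)_\pi = 0$ for all $n$: this uses the resolution \eqref{extstein9b} of $\sE_0$ by $\cInd^{G_\fp}_{K(0)}\cO(0)$, which computes $\cH^\bu(\sE_0)$ in terms of $\cH^\bu$ with coefficients in $\cInd^{G_\fp}_{K(0)}\cO(0)$ (i.e.\ prime-to-$\fp$ level $K_0(\fn)^\fp$ but spherical at $\fp$, i.e.\ full level $\fn$) together with the operator $T_\fp - (q+1)$; after localizing at $\pi$, since $\pi_\fp$ is Steinberg the $T_\fp$-eigenvalue is $\pm(q+1)$... — wait, more carefully: $\pi$ is \emph{new at }$\fp$, so $\pi^{K_0(\fn)^\fp}$ with spherical vector at $\fp$ is zero, hence $\cH^\bu(\cInd^{G_\fp}_{K(0)}\cO(0))_\pi = \bH^\bu(X^{\an}_{\text{level }\fn};\cO)_{E,\pi} = 0$ because $\pi$ does not occur at that level. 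Therefore $\cH^\bu(\sE_0)_\pi=0$ and the connecting map $(\delta_0^n)_\pi$ is an isomorphism for every $n$; in particular $(\delta_0^d)_\pi:\cH^d(\St_{G_\fp})_\pi\xrightarrow{\sim}\cH^{d+1}(\cO(0))_\pi$, giving the first assertion, and consequently $\cH^n(\cO(0))_\pi=0$ for $n\ne d+1$ and $\cong E[\Delta]$ for $n=d+1$; applying the $\ep$-projector yields the displayed formula.

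For part (c), use the long exact sequence \eqref{extdelta} attached to $0\to\cO(0)\to\Ind_{B_\fp}^{G_\fp}\cO(0)\to\St_{G_\fp}(\cO)\to 0$ (and its continuous analogue for $\Ind_{B_\fp,\cont}^{G_\fp}$, the two being identified via \eqref{derham3}, \eqref{derham4} with $W=\cO$; note $\Ind_{B_\fp,\cont}^{G_\fp}\cO(0) = \prolimm\Ind_{B_\fp}^{G_\fp}\cO_m$ by \cite{emerton}, Lemma 4.1.3, and the same $\lim^1$-vanishing argument as above identifies the two localizations). Localizing at $(\pi,\ep)$ and feeding in parts (a) and (b): we get $\cH^n(\St_{G_\fp})_{\pi,\ep} = E$ for $n=d$ and $0$ otherwise, $\cH^n(\cO(0))_{\pi,\ep} = E$ for $n=d+1$ and $0$ otherwise, and the connecting maps $\cH^n(\St_{G_\fp})_{\pi,\ep}\to\cH^{n+1}(\cO(0))_{\pi,\ep}$ are exactly the isomorphisms from (b). Hence in the long exact sequence the map $\cH^d(\St_{G_\fp})_{\pi,\ep}\xrightarrow{\sim}\cH^{d+1}(\cO(0))_{\pi,\ep}$ is an isomorphism, so it does \emph{not} contribute to $\cH^\bu(\Ind_{B_\fp}^{G_\fp}\cO(0))_{\pi,\ep}$ via that piece; instead the sequence breaks into short exact pieces $0\to\cH^n(\cO(0))_{\pi,\ep}\to\cH^n(\Ind_{B_\fp}^{G_\fp}\cO(0))_{\pi,\ep}\to\ker(\delta_0^n)_{\pi,\ep}\to 0$, which forces $\cH^n(\Ind_{B_\fp}^{G_\fp}\cO(0))_{\pi,\ep} = E$ precisely for $n\in\{d,d+1\}$ (the degree-$d$ copy coming from $\ker$ being all of $\cH^d(\St_{G_\fp})_{\pi,\ep}$... — corrected: since $\delta_0^d$ is injective, $\cH^d(\Ind)_{\pi,\ep} \hookrightarrow \cH^d(\St)_{\pi,\ep}$ with cokernel $\ker\delta_0^d = 0$, giving a copy of $E$ in degree $d$ from $\cH^d(\cO(0))_{\pi,\ep}=0$... one simply bookkeeps: $\cH^d(\Ind)_{\pi,\ep}\cong\cH^d(\St)_{\pi,\ep}=E$ and $\cH^{d+1}(\Ind)_{\pi,\ep}\cong\cH^{d+1}(\cO(0))_{\pi,\ep}=E$, all others $0$) and zero elsewhere. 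This is exactly the claim. The main point throughout — and the only nonformal input — is the concentration-in-middle-degree and multiplicity-one computation of the cuspidal localization of $\sS$-manifold cohomology used in (a); everything else is diagram-chasing plus the $\fp$-newness of $\pi$ used in (b).
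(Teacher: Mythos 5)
Your parts (b) and (c) essentially reproduce the paper's argument: (b) uses the resolution \eqref{extstein9b} together with the fact that $\pi$ is new at $\fp$ (conductor $\fp\fn$ versus the $\fp$-spherical level $K_0(\fn)$) to get $\cH^{\bu}(\sE_0)_{\pi}=0$, and (c) is the same localization of the long exact sequence attached to $0\to\cO(0)\to\Ind_{B_{\fp}}^{G_{\fp}}\cO(0)\to\St_{G_{\fp}}(\cO)\to 0$. The genuine gap is in part (a), which is where the non-formal content sits, and your treatment of it is circular. You propose to compute $\cH^{\bu}(\St_{G_{\fp}}(\cO))_{\pi}$ from that long exact sequence by feeding in (i) $\cH^n(\Ind_{B_{\fp}}^{G_{\fp}}\cO(0))_{\pi}\cong E[\Delta]$ exactly for $n=d,d+1$ and (ii) $\cH^n(\cO(0))_{\pi}=0$ for $n\ne d+1$. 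But (i) is precisely part (c) and (ii) is precisely the second assertion of part (b); in the paper both are deduced \emph{from} (a), and your stand-alone justifications for them do not hold. These groups are not the cohomology of a fixed-level locally symmetric space: $\cA_{\cO}(\cO(0),K_0(\fn)^{\fp};\cO)=\Maps(G(\bA_f^{\fp})/K_0(\fn)^{\fp},\cO)$ carries no level condition at $\fp$, so $\cH^{\bu}(\cO(0))$ is cohomology of $\{\fp\}$-arithmetic groups, and the concentration of its $\pi$-part in degree $d+1$ is not a Franke--Borel ``Eisenstein in top degree'' phenomenon (for $D$ a division algebra there is no boundary at all; the nonzero class in degree $d+1$ is the image under $\delta_0$ of the cuspidal class in degree $d$, i.e.\ it comes from the Steinberg structure at $\fp$ --- which is exactly what (b) proves). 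Likewise ``Matsushima/Eichler--Shimura concentration in middle degree'' does not by itself produce $E[\Delta]$ in the two consecutive degrees $d,d+1$ for the $\sS$-manifold cohomology $\cH^{\bu}(\Ind_{B_{\fp}}^{G_{\fp}}\cO(0))_{\pi}$. The input the paper actually uses, and which your proposal never invokes, is (\cite{spiess2}, Prop.\ 5.13), which gives directly $\cH^n(\St_{G_{\fp}}(\cO))_{\pi}=E[\Delta]$ for $n=d$ and $0$ otherwise; (c) is then a consequence, not an ingredient.

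A second problem is your identification $\cH^n(\St_{G_{\fp},\cont}(\cO))_{\pi}=\cH^n(\St_{G_{\fp}}(\cO))_{\pi}$ via $\St_{G_{\fp},\cont}(\cO)=\prolimm\St_{G_{\fp}}(\cO_m)$ and a $\lim^1$ argument. The functor $M\mapsto\cA_{\cO}(M,K_0(\fn)^{\fp};\cO)=\Hom_{\cO}(M,\Maps(G(\bA_f^{\fp})/K_0(\fn)^{\fp},\cO))$ is contravariant in $M$, so the inverse limit defining $\St_{G_{\fp},\cont}(\cO)$ does not yield an inverse system of cohomology groups to which a Mittag--Leffler/$\lim^1$ argument applies; as written this step does not go through (and the same remark applies to your treatment of the continuous induction in (c)). The paper instead compares the two long exact sequences in the ladder \eqref{picoho1}: the middle vertical map is the isomorphism \eqref{derham6} (an isomorphism by \cite{spiess2}, Cor.\ 5.12), the map on the $\cH^n(\cO(0))$ terms is the identity, and the five lemma transfers the comparison to the Steinberg terms. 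Replacing your limit argument by this ladder/five-lemma step, and your ``standard automorphic cohomology'' appeals by the citation of \cite{spiess2}, Prop.\ 5.13, is what is needed to close part (a); after that, your (b) and (c) go through as in the paper.
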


\begin{proof} (a) For the first equality \eqref{cohomautom1} note that we have a commutative diagram 
\begin{equation}
\label{picoho1}
\small{\begin{CD}
\ldots \cH^n(\St_{G_{\fp}}(\cO)) @>>> \cH^n(\Ind_{B_{\fp}}^{G_{\fp}}\cO(0)) @>>> \cH^n(\cO(0))@>>> \cH^{n+1}(\St_{G_{\fp}}(\cO)) \ldots \\
@VVV @VV\eqref{derham6}V @VV \id V @VVV\\
\ldots \cH^n(\St_{G_{\fp}, \cont}(\cO)) @>>> \cH^n(\Ind_{B_{\fp}, \cont}^{G_{\fp}}\cO(0)) @>>> \cH^n(\cO(0)) @>>> \cH^{n+1}(\St_{G_{\fp}, \cont}(\cO)) \ldots \\
\end{CD}}
\end{equation}
where the rows are induced by \eqref{steinberg} and \eqref{steinbergcont} respectively. By (\cite{spiess2}, Cor.\ 5.12) and the five lemma all vertical maps in \eqref{picoho1} are isomorphisms. For the second equality in \eqref{cohomautom1} see (\cite{spiess2}, Prop.\ 5.13). 

(b) We will show that $\cH^n(\sE_0)_{\pi} = 0$ for every $n\ge 0$. For that we use the long exact sequence induced by the resolution \eqref{extstein9b} of $\sE_0$ localized at $\pi$
\begin{equation*}
\label{extdeltase0}
\ldots\cH^{n-1}(\cInd^{G}_{K} \cO(0))_{\pi}\lra \cH^n(\sE_0)_{\pi} \lra \cH^n(\cInd^{G}_{K} \cO(0))_{\pi} \lra \cH^n(\cInd^{G}_{K} \cO(0))_{\pi}\ldots 
\end{equation*}
where $K=G(\cO_{\fp})$. Therefore it suffices to see that $\cH^n(\cInd^{G}_{K} \cO(0))_{\pi}=0$ for every $n\ge 0$. By (\cite{spiess2}, Prop.\ 3.38) we have 
\[
\cH^n(\cInd^{G}_{K} \cO(0))\,\cong \, H^n(X_K^{\an}, \cO)_E\,\cong \, H^n(X_K^{\an}, E).
\]
Since the conductor of $\pi$ is $\fp\fn$ and the level of the quaternionic Hilbert modular variety $X_K^{\an}$ is $= K_0(\fn)\subseteq G(\bA_f)$ we have $H^n(X_K^{\an}, E)_{\pi} = 0$.

 (c) follows from (a) and (b) using diagram \eqref{picoho1}.
\end{proof}

\begin{remark}
\label{remark:ramanunjan}
\rm If $D$ is a totally definite division algebra then the connecting homomorphism
\begin{eqnarray*}
\label{cohomautom5}
&& H^0(G(F), \sA_{\bZ}(\St_{G_{\fp}}(\bZ), K_0(\fn)^{\fp}; \bZ))_E \stackrel{\delta_0^0}{\lra} \\
&& \hspace{2cm} \lra H^1(G(F), \sA_{\bZ}(\bZ, K_0(\fn)^{\fp}; \bZ))_E = H^1(G(F), C(G(\bA_f^{\fp})/K_0(\fn)^{\fp}; \bZ))_E\nonumber
\end{eqnarray*} 
is an isomorphism for any field $E$ of characteristic 0 (i.e. without passing to the localization at $\pi$). In fact, in this case we have 
\[
H^0(X_K, E) \, =\, \left\{\begin{array}{cc} C(G(\bA_f/K_0(\fn), E)
 & \mbox{if $n=0$,}\\
0 &\mbox{if $n\ge 1$}
\end{array}\right.
\]
and $T_{\fp}-(q+1)\id\in \End(C(G(\bA_f/K_0(\fn), E))$ is an isomorphism. \enddemo
\end{remark}

Let $\psi: F_{\fp}^*\to \cO$ be a continuous homomorphism and consider the long exact sequence
\begin{equation}
\label{extdeltapsi}
\ldots \lra \cH^n(\cO(0)) \lra \cH^n(\sE(\psi)) \lra \cH^n(\St_{G_{\fp}, \cont}(\cO))\stackrel{\delta_{\psi}}{\lra} \cH^{n+1}(\cO(0)) \lra \ldots 
\end{equation}
induced by the short exact sequence \eqref{extstein9}. Recall (\cite{spiess1}, Def.\ 6.3), (\cite{gehrmann1}, 2.1) the {\it automorphic $\sL$-invariant} $\sL_{\ep}(\pi, \psi)\in E$ is the uniquely determined scalar satisfying
\begin{equation*}
\label{linv}
\sL_{\ep}(\pi, \psi)\cdot (\delta_0)_{\pi, \ep} \, =\, (\delta_{\psi})_{\pi, \ep}: \cH^d(\St_{G_{\fp}, \cont}(\cO))_{\pi, \ep} \to \cH^{d+1}(\cO(0))_{\pi, \ep}.
\end{equation*}
We will show in the next that $\sL_{\ep}(\pi, \psi)$ independent of the choice of the character $\ep: \Delta \to \{\pm 1\}$.

\begin{lemma}
\label{lemma:linvlin}
Let $\ep:\Delta\to \{\pm 1\}$ be a character. We have

\noi (a) For the normalized valuation $v_{\fp}: F_{\fp}\to \bZ\cup\{+ \infty\}$ we have $\sL_{\ep}(\pi, v_{\fp}) =1$.

\noi (b) The map 
\begin{equation}
\label{linv2}
\sL_{\ep}(\pi, \wcdot): \Hom_{\cont}(F_{\fp}^*, \cO) \lra E, \,\, \psi \mapsto \sL_{\ep}(\pi, \psi)
\end{equation}
is a homomorphism of $\cO$-modules. 
\end{lemma}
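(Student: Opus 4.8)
The plan is to reduce both assertions to formal properties of the connecting maps $\delta_\psi$ in the long exact sequence \eqref{extdeltapsi}, combined with the functoriality of the extension class $[\sE(\psi)]$ in $\psi$ established in Lemma \ref{lemma:extsteinfunc}. For part (b), the key observation is that the assignment $\psi \mapsto \delta_\psi$ is itself $\cO$-linear: since the functor $M \mapsto \cA_\cO(M, K_0(\fn)^\fp;\cO)$ (and hence $M \mapsto \cH^\bu(M)$) is exact on short exact sequences of $\cO[G_\fp]$-modules that are $\cO$-split, and since the formation of the connecting homomorphism in the long exact cohomology sequence is functorial in the short exact sequence, the map sending the extension class $[\sE(\psi)]_{\sim} \in \Ext^1_{\Mod_\cO^{\varpi-\adm}(G_\fp)}(\cO(0), \St_{G_\fp,\cont}(\cO))$ to the connecting map $\delta_\psi \in \Hom(\cH^d(\St_{G_\fp,\cont}(\cO)), \cH^{d+1}(\cO(0)))$ is a homomorphism of $\cO$-modules (this is the usual compatibility of Yoneda composition with the boundary map). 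Composing with the $\cO$-linear map $\psi \mapsto [\sE(\psi)]_{\sim}$ from Lemma \ref{lemma:extsteinfunc} shows $\psi \mapsto \delta_\psi$ is $\cO$-linear. Localizing at $(\la_\pi,\ep)$ and using Proposition \ref{prop:harder}(a),(b) — which guarantee that $\cH^d(\St_{G_\fp,\cont}(\cO))_{\pi,\ep} \cong E \cong \cH^{d+1}(\cO(0))_{\pi,\ep}$ and that $(\delta_0)_{\pi,\ep}$ is an isomorphism — we get that $(\delta_\psi)_{\pi,\ep} = \sL_\ep(\pi,\psi)\cdot(\delta_0)_{\pi,\ep}$ with $\psi \mapsto (\delta_\psi)_{\pi,\ep}$ being $\cO$-linear; dividing by the fixed isomorphism $(\delta_0)_{\pi,\ep}$ yields that $\psi \mapsto \sL_\ep(\pi,\psi)$ is $\cO$-linear, proving (b).

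For part (a), I would unwind the definition of $\sL_\ep(\pi, v_\fp)$ and compare the extension $\sE(v_\fp)$ with the extension $\sE_0$ of \eqref{extstein9a}. By Remark \ref{remark:extsteindisc}, when $\psi = v_\fp$ the sequence \eqref{extstein6} consists of genuinely admissible (not merely $\varpi$-adically admissible) $\cO[T_\fp]$-modules, one may apply ordinary parabolic induction, and the resulting extension \eqref{extstein9} is precisely the push-out of \eqref{extstein9a} along the natural embedding $\St_{G_\fp}(\cO) \hookrightarrow \St_{G_\fp,\cont}(\cO)$. Passing to cohomology, this identifies the connecting map $\delta_{v_\fp}$ with $\delta_0$ up to the maps induced by $\St_{G_\fp}(\cO)\hookrightarrow \St_{G_\fp,\cont}(\cO)$ and $\cO(0)\to\cO(0)$ (the latter being the identity); the first equality in \eqref{cohomautom1} of Proposition \ref{prop:harder}(a) says exactly that $\cH^n(\St_{G_\fp}(\cO))_\pi \to \cH^n(\St_{G_\fp,\cont}(\cO))_\pi$ is an isomorphism. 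Hence $(\delta_{v_\fp})_{\pi,\ep} = (\delta_0)_{\pi,\ep}$ as maps $\cH^d(\St_{G_\fp,\cont}(\cO))_{\pi,\ep}\to \cH^{d+1}(\cO(0))_{\pi,\ep}$, which by the defining relation forces $\sL_\ep(\pi, v_\fp) = 1$.

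The main technical point to get right will be the naturality of the connecting homomorphism with respect to morphisms of short exact sequences — concretely, that a commutative ladder of $\cO$-split short exact sequences of $\cO[G_\fp]$-modules induces, after applying $\cH^\bu$, a commutative ladder of long exact sequences, and in particular that the boundary map depends additively on the middle term of the extension when the outer terms are fixed. This is standard homological algebra (it follows from the fact that the Baer sum of extensions, realized via pull-back along the diagonal and push-out along the codiagonal, is compatible with the snake-lemma construction of $\delta$), but one must check it is available in the category $\Mod_\cO^{\varpi-\adm}(G_\fp)$ — which is fine, since that category is abelian and the functor $\cA_\cO(-,K_0(\fn)^\fp;\cO)$ sends $\cO$-split exact sequences to exact sequences, so the relevant $\delta$'s arise from honest short exact sequences of $G(F)_+$-modules and group cohomology. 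Once this naturality is in hand, both (a) and (b) are essentially formal consequences of Lemma \ref{lemma:extsteinfunc}, Remark \ref{remark:extsteindisc}, and Proposition \ref{prop:harder}.
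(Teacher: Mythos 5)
Your proposal is correct and follows essentially the same route as the paper's (very terse) proof: part (a) is exactly the identity $\delta_{v_{\fp}}=\delta_0$ obtained from the push-out description in Remark \ref{remark:extsteindisc} together with the identification of Proposition \ref{prop:harder}(a), and part (b) is Lemma \ref{lemma:extsteinfunc} combined with the standard $\cO$-linearity of the assignment sending an extension class to its connecting homomorphism, then localization at $(\pi,\ep)$ and division by the isomorphism $(\delta_0)_{\pi,\ep}$. Your expanded justification of the naturality of the boundary maps after applying $\cA_{\cO}(-,K_0(\fn)^{\fp};\cO)$ is exactly what the paper's phrase ``standard properties of $\delta$-functors'' is compressing.
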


\begin{proof} (a) follows from $\delta_{v_\fp} = \delta_0$. For (b) note that by Lemma \ref{lemma:extsteinfunc} and standard properties of $\delta$-functors the map 
\begin{equation*}
\Hom_{\cont}(F_{\fp}^*, \cO) \lra  \Hom_{\cO}(\cH^d(\St_{G_{\fp}, \cont}(\cO))_{\pi, \ep}, \cH^{d+1}(\cO(0))_{\pi, \ep}), \, \psi\mapsto \delta_{\psi}
\end{equation*}
is a homomorphism of $\cO$-modules. 
\end{proof}

Since $\Hom_{\cont}(F_{\fp}^*, \cO)\otimes_{\cO} E = \Hom_{\cont}(F_{\fp}^*, E)$, Lemma \ref{lemma:linvlin} implies that \eqref{linv2} extends to an $E$-linear functional
\begin{equation}
\label{linv3}
\sL_{\ep}(\pi, \wcdot): \Hom_{\cont}(F_{\fp}^*, E) \lra E, \,\, \psi \mapsto \sL_{\ep}(\pi, \psi).
\end{equation}

We need the following criterion characterizing the kernel of \eqref{linv2}.

\begin{prop}
\label{prop:linvinlin2} Let $\psi\in \Hom_{\cont}(F_{\fp}^*, \cO)$ and let $\ep: \Delta\to \{\pm 1\}$ be a character. The following conditions are equivalent
\medskip

\noi (i) $\sL_{\ep}(\pi, \psi) = 0$. 
\medskip

\noi (ii) For either $n=d$ or $n=d+1$ the $\wE$-module $\bH_{\varpi-\ad}^n(X^{\an}; \wE(\Theta_{\psi}), E)_{\pi, \ep}$ is free of rank $1$. 
\end{prop}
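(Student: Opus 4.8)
The key is to convert the vanishing condition on $\sL_\ep(\pi,\psi)$, which is phrased in terms of the connecting map $\delta_\psi$ of the long exact sequence \eqref{extdeltapsi}, into a statement about the ranks of the localized cohomology modules $\cH^n(\sE(\psi))_{\pi,\ep}$. Once we have that, we feed the short exact sequence \eqref{extstein9} for the $\cO$-module $\sE(\psi)$ through the functor $\cA_\cO(-,K_0(\fn)^\fp;\cO)$ and use the comparison with the $\sS$-manifold cohomology to identify $\cH^n(\sE(\psi))$ with $\bH^n_{\varpi-\ad}(X^{\an};\wE(\Theta_\psi),E)$, which is essentially the definition of $\wV$ in the introduction, after localizing at $\pi$.

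\medskip

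First I would recall from Proposition \ref{prop:harder}(a),(b) that after localizing at $(\la_\pi,\ep)$ the sequence \eqref{extdeltapsi} collapses: one has $\cH^n(\St_{G_\fp,\cont}(\cO))_{\pi,\ep} = E$ for $n=d$ and $0$ otherwise, and $\cH^n(\cO(0))_{\pi,\ep} = E$ for $n=d+1$ and $0$ otherwise. Hence in \eqref{extdeltapsi} the only possibly nonzero segment is
\begin{equation*}
0 \lra \cH^d(\sE(\psi))_{\pi,\ep} \lra \cH^d(\St_{G_\fp,\cont}(\cO))_{\pi,\ep} \stackrel{\delta_\psi}{\lra} \cH^{d+1}(\cO(0))_{\pi,\ep} \lra \cH^{d+1}(\sE(\psi))_{\pi,\ep} \lra 0,
\end{equation*}
with the two outer terms both $E$. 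By definition $\sL_\ep(\pi,\psi) = 0$ means precisely that $\delta_\psi$ vanishes on this localization (since $(\delta_0)_{\pi,\ep}$ is an isomorphism by Proposition \ref{prop:harder}(b), and $\sL_\ep(\pi,\psi)(\delta_0)_{\pi,\ep} = (\delta_\psi)_{\pi,\ep}$). So $\sL_\ep(\pi,\psi)=0$ is equivalent to $\cH^d(\sE(\psi))_{\pi,\ep}\cong E$ and $\cH^{d+1}(\sE(\psi))_{\pi,\ep}\cong E$, i.e. to $\cH^n(\sE(\psi))_{\pi,\ep}$ being free of rank $1$ for $n=d$ (equivalently for $n=d+1$); while if $\sL_\ep(\pi,\psi)\ne 0$ then $\delta_\psi$ is an isomorphism and both groups vanish. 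This already gives the equivalence of (i) with "$\cH^n(\sE(\psi))_{\pi,\ep}$ is free of rank $1$ for either $n=d$ or $n=d+1$."

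\medskip

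It remains to identify $\cH^n(\sE(\psi))_{\pi,\ep}$ with $\bH^n_{\varpi-\ad}(X^{\an};\wE(\Theta_\psi),E)_{\pi,\ep}$. For this I apply the functor $\cA_\cO(-,K_0(\fn)^\fp;\cO)$ to the short exact sequence
\begin{equation*}
0 \lra \Ind^{G_\fp}_{B_\fp,\cont}\cO(0) \lra \Ind^{G_\fp}_{B_\fp,\cont}\wcO(\Theta_\psi) \lra \Ind^{G_\fp}_{B_\fp,\cont}\cO(0) \lra 0
\end{equation*}
of \eqref{extstein7}, which splits $\cO$-linearly, hence yields a long exact sequence in $\cH^\bullet$. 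Comparing with \eqref{extstein8} and Definition \ref{df:extstein} (the pull-back/push-out construction of $\sE(\psi)$ from $\Ind^{G_\fp}_{B_\fp,\cont}\wcO(\Theta_\psi)$) together with the isomorphism \eqref{derham4} applied to the $\varpi$-adically admissible module $W = \wcO(\Theta_\psi)$ of Remark \ref{remark:padicadm} — whose associated Banach representation is $\wE(\Theta_\psi)$ by Remarks \ref{remarks:locdualpply}(b) — one gets $\cH^\bullet(\Ind^{G_\fp}_{B_\fp,\cont}\wcO(\Theta_\psi)) \cong \bH^\bullet_{\varpi-\ad}(X^{\an};\wE(\Theta_\psi),E)$, Hecke- and $\Delta$-equivariantly. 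Chasing the pull-back of \eqref{extstein8} by $(\star)$ and the push-out of \eqref{extstein9} by $\pr$ through $\cH^\bullet$ and localizing at $(\pi,\ep)$, the correction terms $\cH^\bullet(\cO(0))_{\pi,\ep}$ and $\cH^\bullet(\Ind^{G_\fp}_{B_\fp,\cont}\cO(0))_{\pi,\ep}$ — controlled by Proposition \ref{prop:harder}(b),(c) — land only in degrees $d,d+1$, and a short diagram chase shows that in degrees $d$ and $d+1$ the group $\cH^n(\sE(\psi))_{\pi,\ep}$ and $\bH^n_{\varpi-\ad}(X^{\an};\wE(\Theta_\psi),E)_{\pi,\ep}$ have the same $E$-dimension (this is where one must be slightly careful, matching the two auxiliary long exact sequences degree by degree).

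\medskip

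\textbf{Main obstacle.} The delicate point is the bookkeeping in the last step: the Banach-space cohomology $\bH^\bullet_{\varpi-\ad}(X^{\an};\wE(\Theta_\psi),E)$ corresponds to $\cH^\bullet$ of the \emph{induced} module $\Ind^{G_\fp}_{B_\fp,\cont}\wcO(\Theta_\psi)$, whereas $\sE(\psi)$ is obtained from it by a pull-back along $(\star)$ followed by a push-out along $\pr$; one must verify that, after localizing at $(\pi,\ep)$, these two operations do not change the rank in degrees $d$ and $d+1$. This amounts to showing that the maps $\cH^n(\cO(0))_{\pi,\ep} \to \cH^n(\Ind^{G_\fp}_{B_\fp,\cont}\cO(0))_{\pi,\ep}$ induced by $(\star)$, and $\cH^n(\Ind^{G_\fp}_{B_\fp,\cont}\cO(0))_{\pi,\ep} \to \cH^n(\St_{G_\fp,\cont}(\cO))_{\pi,\ep}$ induced by $\pr$, behave as predicted by the five lemma applied to the relevant ladder of long exact sequences — which is exactly the content already extracted from Proposition \ref{prop:harder}(c) and diagram \eqref{picoho1}. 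Everything else is formal manipulation of $\delta$-functors and the localization $(-)_{\pi,\ep}$, which is exact.
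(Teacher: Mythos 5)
Your first step is fine: localizing \eqref{extdeltapsi} at $(\pi,\ep)$ and using Proposition \ref{prop:harder} (a), (b) does show that $\sL_{\ep}(\pi,\psi)=0$ is equivalent to $\dim_E \cH^n(\sE(\psi))_{\pi,\ep}=1$ for $n=d$ (equivalently $n=d+1$), and $\sL_{\ep}(\pi,\psi)\ne 0$ forces both groups to vanish. The gap is in the bridge to (ii). The group in (ii) is, via \eqref{derham4}, $\cH^n(\Ind_{B_{\fp},\cont}^{G_{\fp}}\wcO(\Theta_{\psi}))_{\pi,\ep}$, and this is \emph{not} of the same $E$-dimension as $\cH^n(\sE(\psi))_{\pi,\ep}$: localizing the long exact sequence \eqref{limEmSS4} attached to \eqref{extstein7} and using Proposition \ref{prop:harder} (c) gives $\dim_E \cH^n(\Ind_{B_{\fp},\cont}^{G_{\fp}}\wcO(\Theta_{\psi}))_{\pi,\ep}=2$ (if $\delta_{\Theta_{\psi}}$ vanishes after localization) or $=1$ (if not) for $n=d,d+1$, whereas your computation gives $1$ resp.\ $0$ for $\cH^n(\sE(\psi))_{\pi,\ep}$. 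The pull-back along $(\star)$ and push-out along $\pr$ in the construction of $\sE(\psi)$ each strip off a copy of $E$ after localization, so the ``short diagram chase showing equal dimensions'' you invoke would fail; the correct statement is a shift by one, and the correct criterion is $\sL_{\ep}(\pi,\psi)=0\Leftrightarrow \dim_E \bH_{\varpi-\ad}^n(X^{\an};\wE(\Theta_{\psi}),E)_{\pi,\ep}=2$. To get there one must also compare the two connecting maps, i.e.\ check that $(\delta_{\Theta_{\psi}})_{\pi,\ep}$ vanishes iff $(\delta_{\psi})_{\pi,\ep}$ does, via the commutative square induced by \eqref{steinbergcont} whose vertical arrows become isomorphisms of one-dimensional spaces in degree $d$ by Proposition \ref{prop:harder} (a), (b); this is the comparison the paper actually carries out, and your argument never makes it.

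A second, independent omission: condition (ii) asserts freeness of rank $1$ over $\wE$, not merely an $E$-dimension count. A two-dimensional $E$-vector space with an $\wE$-action need not be free (e.g.\ $\vep$ could act by zero), so even with the corrected dimension $2$ you still need that $\cH^n(\Ind_{B_{\fp},\cont}^{G_{\fp}}\wcO(\Theta_{\psi}))_{\pi,\ep}$ is generated by one element over $\wE$. The paper obtains this from the fact that $\beta^*\circ\alpha^*$ in \eqref{limEmSS4} is multiplication by $\vep$ together with Proposition \ref{prop:harder} (c), which yields \eqref{limEmSS5}; your proposal never touches the $\wE$-module structure, so the implication ``$\sL_{\ep}(\pi,\psi)=0\Rightarrow$ (ii)'' is not established even after repairing the dimension bookkeeping.
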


Here the $\wE$-module structure on $\bH_{\varpi-\ad}^n(X^{\an}; \wE(\Theta_{\psi}), E)$ is induced by the $\wE$-module structure on $\wE(\Theta_{\psi})$.

\begin{proof} Clearly, (i) is equivalent to the vanishing of the homomorphism
\begin{equation}
\label{linv4}
(\delta_{\psi})_{\pi, \ep}: \cH^d(\St_{G_{\fp}, \cont}(\cO))_{\pi, \ep}\lra \cH^{d+1}(\cO(0))_{\pi, \ep}.
\end{equation}
Consider the long exact sequence \eqref{extdelta} associated to \eqref{extstein7}
\begin{eqnarray}
\label{limEmSS4}
& \ldots \lra  \cH^n(\Ind_{B, \cont}^G \cO(0))\stackrel{\beta^*}{\lra}\cH^n(\Ind_{B, \cont}^G \wcO(\Theta_{\psi})) \stackrel{\alpha^*}{\lra} \cH^n(\Ind_{B, \cont}^G \cO(0)) \\
& \stackrel{\delta_{\Theta_{\psi}}}{\lra}\cH^{n+1}(\Ind_{B, \cont}^G \cO(0))  \lra\ldots 
\nonumber
\end{eqnarray}
Prop.\ \ref{prop:harder} (c) together with the fact that $\beta^* \circ \alpha^*$ is multiplication with $\vep$ implies 
\begin{equation}
\label{limEmSS5}
\dim_E \left( \cH^n(\Ind_{B, \cont}^G \wcO(\Theta_{\psi}))_{\pi, \ep}/ \vep \cH^n(\Ind_{B, \cont}^G \wcO(\Theta_{\psi}))_{\pi, \ep} \right) \, =\, 1,
\end{equation}
i.e.\ the $\wE$-module $\cH^n(\Ind_{B, \cont}^G \wcO(\Theta_{\psi}))_{\pi, \ep}$ is generated by one element if $n=d,d+1$. It follows from standard functorial properties of connecting homomorphisms that the diagram
\[
\begin{CD}
\cH^n(\Ind_{B, \cont}^G \cO(0)) @>\delta_{\Theta_{\psi}} >> \cH^{n+1}(\Ind_{B, \cont}^G \cO(0))\\
@AAA @VVV\\
\cH^n(\St_{G_{\fp}, \cont}(\cO))@> \delta_{\psi} >> \cH^{n+1}(\cO(0))
\end{CD}
\]
commutes for each $n\in \bZ$. Here the vertical maps are induced by the homomorphisms appearing in the sequence \eqref{steinbergcont}. If $n=d$ the vertical maps are -- after localization with respect to $(\pi, \ep)$ -- isomorphisms of onedimensional $E$-vector spaces by Prop.\ \ref{prop:harder} (a), (b). Hence the vanishing of the map \eqref{linv4} is equivalent to the vanishing of  
\begin{equation}
\label{connecan}
(\delta_{\Theta_{\psi}})_{\pi, \ep}: \cH^d(\Ind_{B, \cont}^G \cO(0))_{\pi, \ep} \lra \cH^{d+1}(\Ind_{B, \cont}^G \cO(0))_{\pi, \ep}.
\end{equation}
Localizing \eqref{limEmSS4} with respect to $(\pi, \ep)$ and using Prop.\ \ref{prop:harder} (c) this implies that 
\[
\sL_{\ep}(\pi, \psi) = 0 \Leftrightarrow \dim_E \cH^d(\Ind_{B, \cont}^G \wcO(\Theta_{\psi}))_{\pi, \ep} =2 \Leftrightarrow  \dim_E \cH^{d+1}(\Ind_{B, \cont}^G \wcO(\Theta_{\psi}))_{\pi, \ep} =2.
\]
Using \eqref{limEmSS5} and this yields the equivalence of the conditions (i) and (ii).
\end{proof}

\paragraph{Arithmetic $\sL$-invariants} We recall the definition of the arithmetic (i.e.\ Mazur-Tate-Teitelbaum) $\sL$-invariant $\sL(V_{\pi}, \psi)$ (see \cite{mtt}, \cite{grstevens}). It is defined in terms of the $\varpi$-adic Galois representation $V_{\pi}$ attached to $\pi$. Recall (\cite{carayol}, \cite{wiles}, \cite{taylor}) that there exists a twodimensional $E$-vector space $V = V_{\pi}$ together with a continuous homomorphism $\rho= \rho_{\pi}: \fG:=\Gal(\overline{F}/F)\to \GL(V_{\pi})$ with the following properties
\medskip

\noi (i) $\rho$ is unramified outside the set of primes dividing $p\fd\fn$.
\medskip

\noi (ii) If $\Frob_{\fq} \in \Gal(\overline{F}/F)$ is a Frobenius for a prime $\fq\nmid p\fn\fd$ then we have 
\begin{equation}
\label{galrephmf}
\Tr(\rho(\Frob_{\fq})) = \la(T_{\fq}), \qquad \det( \rho(\Frob_{\fq})) \, =\, \Norm(\fq).
\end{equation}
\medskip

\noi (iii) Let $\fG_{\fp} \subseteq \Gal(\barQ/F)$ be the decomposition group of a prime above $\fp$. Then there exists a short exact sequence of $E[\fG_{\fp}]$-modules 
\begin{equation}
\label{tateperiod}
0 \lra E(1) \lra V \lra E(0) \lra 0.
\end{equation}
where $E(m):= E \otimes_{\bZ_p} \bZ_p(m)$, $m \in \bZ$, i.e.\ $E(m)=E$ equipped with $\fG_{\fp}$-action via the $m$-th power of the cyclotomic character $\chi_{\cycl}: \fG_{\fp}\to \Gal(F_{\fp}(\mu_{p^{\infty}})/F_{\fp})\subseteq \bZ_p^*$. The $\fG_{\fp}$-representation $V$ is semistable but not crystalline\footnote{Since $V$ can be realized as a direct summand of $T_p(A)_E$ of an abelian variety $A/F$ with split multiplicative reduction at $\fp$.}. 

By \cite{ribet} (see also \cite{taylor2}, Prop.\ 3.1) the Galois representation $\rho$ is simple and uniquely determined by the properties (i) and (ii). We denote by
\begin{equation*}
\label{tateperiod2}
\xi= \xi(\pi) \in \Ext_{E[\fG_{\fp}]}^1(E(0), E(1)) \,= \,H^1(F_{\fp}, E(1))
\end{equation*}
the class of the extension \eqref{tateperiod}. Here $H^1(F_{\fp}, \wcdot )= H_{\cont}^1(\fG_{\fp}, \wcdot)$ denotes continuous Galois cohomology. Also the local reciprocity map $\rec: F_{\fp}^*\to \Gal(\overline{F_{\fp}}/F_{\fp})^{\ab} \cong \fG_{\fp}^{\ab}$ induces an isomorphism 
\begin{equation*}
\label{reciproc}
H^1(F_{\fp} , E(0)) \, =\, \Hom_{\cont}(\fG_{\fp}, E) \, \lra \, \Hom_{\cont}(F_{\fp}^*, E), \, \varphi\mapsto \varphi\circ \rec.
\end{equation*}
We denote the inverse map by 
\begin{equation*}
\label{recproc2}
\partial: \Hom_{\cont}(F_{\fp}^*, E)\, \lra \,H^1(F_{\fp}, E(0)), \,\, \psi \mapsto \partial(\psi).
\end{equation*}
Following (\cite{grstevens}, Def.\ 3.9) we define

\begin{df}
\label{df:arithmlinv} 
The $\sL$-invariant of $V_{\pi}$ associated to $\psi\in \Hom_{\cont}(F_{\fp}^*, E)$ is the scalar $\sL(V_{\pi}, \psi)\in E$ characterized by
\[
\sL(V_{\pi}, \psi)\, \partial(v_{\fp}) \cup \xi(\pi) \, =\, \partial(\psi) \cup \xi(\pi).
\]
\end{df}

Here the cup-product is Tate's local duality pairing 
\[
H^1(F_{\fp}, E(0))\times H^1(F_{\fp}, E(1))\lra H^2(F_{\fp}, E(1))\cong E.
\]
The fact that $V$ is not crystalline implies $\partial(\ord_{\fp}) \cup \xi(\pi) \ne 0$. Indeed, since the image of $\xi$ under the projection 
$H^1(F_{\fp}, E(1))\to H_{/f}^1(F_{\fp}, E(1)) := H^1(F_{\fp}, E(1))/H_f^1(F_{\fp}, E(1))$ is non-trivial, its cup-product with the $\partial(v_{\fp})\in H_{f}^1(F_{\fp}, E(0))$ is non-trivial as well. 

\begin{remark}
\label{remark:extsteinfunc}
\rm The map
\[
\sL(V_{\pi}, \wcdot): \Hom(F_{\fp}^*, E)\lra E, \,\, \psi \mapsto \sL(V_{\pi}, \psi)
\]
is an $E$-linear functional with $\sL(V_{\pi}, v_{\fp})=1$. \enddemo
\end{remark}

Our main result is

\begin{theo}
\label{theo:linvmttautom}
We have $\sL_{\ep}(\pi, \psi)\, =\, \sL(V_{\pi}, \psi)$ for every $\psi\in \Hom_{\cont}(F_{\fp}^*, E)$ and every character $\ep:\Delta\to \{\pm 1\}$.\footnote{After this work was completed the author was informed that Gehrmann and Rosso have obtained the same result; see \cite{gehrrosso}.}
\end{theo}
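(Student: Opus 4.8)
The plan is to prove $\sL_{\ep}(\pi,\psi) = \sL(V_{\pi},\psi)$ by treating the cases $d = 0$, $d = 1$ and $d > 1$ separately, where $d = \sharp\Sigma$, and in each case reducing to a vanishing statement. Indeed, by Lemma~\ref{lemma:linvlin} and Remark~\ref{remark:extsteinfunc} both $\psi\mapsto\sL_{\ep}(\pi,\psi)$ and $\psi\mapsto\sL(V_{\pi},\psi)$ are $E$-linear functionals on $\Hom_{\cont}(F_{\fp}^{*},E)$ taking the value $1$ at $\psi = v_{\fp}$, so applying the desired identity to $\psi - \sL_{\ep}(\pi,\psi)\,v_{\fp}$ shows that it suffices, for each fixed $\ep$, to prove that $\sL_{\ep}(\pi,\psi) = 0$ implies $\sL(V_{\pi},\psi) = 0$. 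The asserted independence of $\sL_{\ep}(\pi,\psi)$ from $\ep$ is then automatic, since $\sL(V_{\pi},\psi)$ does not involve $\ep$.

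\textbf{The totally definite case $d = 0$.} Here $G(F)$ is finite modulo its centre, and $\cH^{0}(\St_{G_{\fp},\cont}(\cO))_{\pi}$, which together with $\cH^{1}(\cO(0))_{\pi}$ defines $\sL_{\ep}(\pi,\psi)$, unwinds into a space of $E$-valued harmonic cocycles on the Bruhat--Tits tree of $\PGL_{2}(F_{\fp})$; thus $\sL_{\ep}(\pi,\psi)$ is a linear-in-$\psi$ avatar of Teitelbaum's $\sL$-invariant \cite{teitelbaum}. On the other hand $V_{\pi}$ is realised in the $\fp$-adic Tate module of a quotient of the Jacobian of the Shimura curve whose $\fp$-adic uniformisation (\u{C}erednik--Drinfeld for $F = \bQ$, Boutot--Zink for general $F$) is governed by a discrete subgroup of $D^{*}$; as $\pi_{\fp}$ is Steinberg this quotient has split multiplicative reduction at $\fp$. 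Identifying the rigid-analytic uniformisation cocycle with Teitelbaum's harmonic cocycle yields $\sL_{\ep}(\pi,\psi) = \sL(V_{\pi},\psi)$; this is essentially the resolution of the Teitelbaum $\sL$-invariant conjecture, which I would simply invoke.

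\textbf{The case $d = 1$.} This is the heart of the argument. Since $d \ge 1$, Proposition~\ref{prop:linvinlin2} propagates through \eqref{derham5} to the $\sS$-variety $X = X_{0}^{D}(\fn)^{\fp}$: the module $\wV_{\pi} := \bH_{\varpi-\ad}^{1}(X_{\barQ};\wE(\Theta_{\psi}),E)_{\pi}$ carries a continuous $\fG = \Gal(\overline{F}/F)$-action, the $\vep$-multiplication exact sequence $0\to E(0)\to\wE(\Theta_{\psi})\to E(0)\to 0$ of $T_{\fp}$-Banach modules induces a Galois-equivariant long exact sequence, and combining Proposition~\ref{prop:harder}(c) with the identification (via \eqref{derham5}) of the $\pi$-part of the trivial-coefficient cohomology with $V_{\pi}$ shows that $\sL_{\ep}(\pi,\psi) = 0$ holds if and only if the resulting sequence $0\to V_{\pi}\to\wV_{\pi}\to V_{\pi}\to 0$ of $E[\fG]$-modules is exact, i.e.\ $\wV_{\pi}$ is a first-order deformation of $V_{\pi}$ over $\wE = E[\vep]$. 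To deduce $\sL(V_{\pi},\psi) = 0$ from the existence of such a deformation I would invoke the explicit structure of $\wV_{\pi}$ from (\cite{spiess2}, \S\S 5.6--5.7): a two-step $\fG_{\fp}$-stable filtration on $\wV_{\pi}$ whose graded pieces are first-order deformations of $E(1)$ and $E(0)$ on which $\Gal(\barQ_{p}/F_{\fp})$ acts through $\Gal(\barQ_{p}/F_{\fp})^{\ab}$ by characters explicitly given in terms of $\psi$ and the cyclotomic character. Inserting this into the Greenberg--Stevens argument (\cite{grstevens}, proof of Thm.~3.14) --- which expresses the Mazur--Tate--Teitelbaum $\sL$-invariant as a logarithmic derivative of the Frobenius eigenvalue along a deformation, here the $\psi$-direction --- forces the cup-product $\partial(\psi)\cup\xi(\pi)$ of Definition~\ref{df:arithmlinv} to vanish, hence $\sL(V_{\pi},\psi) = 0$. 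I expect the main obstacle to be exactly this last point: one must verify on the nose that the geometric deformation $\wV_{\pi}$ coming from $\sL_{\ep}(\pi,\psi) = 0$ is the deformation whose associated cup-product computes $\sL(V_{\pi},\psi)$, i.e.\ that the description of $\wV_{\pi}|_{\fG_{\fp}}$ in \cite{spiess2} is compatible, term by term, with the Greenberg--Stevens computation.

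\textbf{The case $d > 1$.} I would reduce to the cases $d \le 1$ by the Jacquet--Langlands functoriality \eqref{ljaclang} of automorphic $\sL$-invariants (Theorem~\ref{theo:linvjl}): with $\barD$ the quaternion algebra over $F$ ramified at the same finite places as $D$ and at all but at most one archimedean place, $\bar{d}\in\{0,1\}$, and $\pi'$ the Jacquet--Langlands transfer of $\pi$ to $\barG = \barD^{*}/F^{*}$, one has $\sL_{\ep}(\pi,\psi) = \sL_{\ep}(\pi',\psi)$. For the proof of \eqref{ljaclang} I would compare the $G$-side with the $\barG$-side through the mixed cohomology $\bH_{\varpi-\ad}^{d}((X^{D})^{\an};\fOrd_{\varpi-\ad}^{\bar{d}}((X^{\barD})^{\an},\cO)_{E},E)$, the partial ordinary part of (\cite{spiess2}, \S 4.1) being an admissible $E$-Banach representation of $T_{\fp}$; after localising at the Hecke eigensystem of $\pi$, tracking the connecting maps attached to \eqref{extstein9} on the two sides identifies the $\sL$-invariants. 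Since $V_{\pi'} = V_{\pi}$ as Galois representations, combining with the settled cases $\bar{d} = 0,1$ gives $\sL_{\ep}(\pi,\psi) = \sL_{\ep}(\pi',\psi) = \sL(V_{\pi'},\psi) = \sL(V_{\pi},\psi)$, completing the induction. A secondary technical difficulty here is controlling this mixed cohomology under localisation and matching the relevant extension-class data on both sides.
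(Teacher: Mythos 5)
Your overall architecture coincides with the paper's: the linearity reduction to ``$\sL_{\ep}(\pi,\psi)=0$ implies $\sL(V_{\pi},\psi)=0$'', the case $d=0$ via \u{C}erednik/Boutot--Zink uniformization (the paper carries this out for general $F$ through the Manin--Drinfeld description of the uniformized Jacobian and (\cite{grstevens}, Thm.\ 3.11), rather than quoting Teitelbaum, whose result only covers $F=\bQ$), and the case $d=1$ via the $\wE$-deformation $\wV_{\pi}=\bH^{1}_{\varpi-\ad}(X_{\barQ};\wE(\Theta_{\psi}),E)_{\pi}$, the filtration data of (\cite{spiess2}, \S 5.6--5.7) and the Greenberg--Stevens computation. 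Two small points there: the paper must establish \emph{first} that the vanishing of $\sL_{\ep}(\pi,\psi)$ is independent of $\ep$ (simplicity of $V_{\pi}$ forces the connecting map localized at $\pi$ to be injective or zero), because the Galois deformation lives on the $\pi$-localization rather than the $(\pi,\ep)$-localization; and the split case $F=\bQ$, $D=M_2(\bQ)$ requires passing to interior cohomology. These are fixable details.

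The genuine gap is the case $d\ge 2$. You name the correct object, $\bH^{d}_{\varpi-\ad}((X^{D})^{\an};\fOrd^{\bar d}_{\varpi-\ad}((X^{\barD})^{\an},\cO)_{E},E)$, but ``tracking the connecting maps attached to \eqref{extstein9} on the two sides'' is not an argument: there is no direct comparison map between the $D$-side and $\barD$-side cohomologies of the extensions $\sE(\psi)$, and localizing the mixed cohomology at $\pi$ does not by itself relate the two connecting homomorphisms. What the paper actually does (Lemma \ref{lemma:linvkey}, proved via Lemmas \ref{lemma:piinfx}, \ref{lemma:classicalpoints}, \ref{lemma:jacquetlanglandsfam}) is a $p$-adic family argument over the Iwasawa algebra: one first reduces Theorem \ref{theo:linvjl} to characters $\psi$ with $U_{\fp}/(\Ker(\psi)\cap U_{\fp})\cong\bZ_p$ by a basis argument; then one forms $\cV=\fOrd^{V,\bar d}_{\varpi-\ad}(\barX^{\an},\cO)_{E,\barep}$ as a module over $\La_E$ and the Hecke--Iwasawa algebra $R$, and the hypothesis $\sL(\pi',\psi)=0$ enters \emph{only} through the identification $\cV[\fm_0^{2}]\cong\wE(\Theta_{\psi})$ at the point $x_0$ attached to $\pi'$, at which $\Spec R\to\Spec\La_E$ is \'etale; finally, density of classical points plus semicontinuity show that the $\la$-quotient $\cN$ of $\bH^{d}_{\varpi-\ad}(X^{\an};\cV,E)_{\ep}$ is an invertible sheaf near $x_0$, so that specializing along $R/\fm_0^{2}\cong\La_E/\fa^{2}\cong\wE$ gives that $\bH^{d}_{\varpi-\ad}(X^{\an};\wE(\Theta_{\psi}),E)_{\pi,\ep}$ is $\wE$-free of rank $1$, which by Prop.\ \ref{prop:linvinlin2} is precisely $\sL_{\ep}(\pi,\psi)=0$. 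Without this interpolation mechanism (classical points, \'etaleness at $x_0$, Nakayama) your sketch does not yield \eqref{ljaclang}, and hence does not complete the reduction of $d\ge 2$ to $\bar d\le 1$.
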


\begin{remark}
\label{remark:mainthm}
\rm The equality of $\sL$-invariants appeared as a conjecture in \cite{greenberg} (in the case $D=M_2(F)$ and $F$ has class number 1) and in \cite{gms} (for $D$ a division algebras). Theorem \ref{theo:linvmttautom} has been known previously in the following cases: $F=\bQ$, $D$ a definite division algebra (see \cite{teitelbaum}), $F=\bQ$, $D=M_2(\bQ)$ (by \cite{grstevens}, \cite{darmon} and \cite{dasgupta}), $F=\bQ$, $D$ a definite division algebra (by \cite{dasgreen} and \cite{lrv}) and for $F$ arbitrary, $D=M_2(F)$, $\psi=\log_p\circ \Norm_{F_{\fp}/\bQ_p}$ and $\ep=1$ (by \cite{spiess1}.

Note thought that even in the classical case $F=\bQ$ and $D=M_2(\bQ)$ our proof is new as it does not use big Galois representations associated to Hida families.
\enddemo
\end{remark} 

\section{Proof of the comparison Theorem if $d=0$ and $d= 1$}
\label{section:shimuracurve}

\paragraph{The case $d=0$} The proof of Thm.\ \ref{theo:linvmttautom} in this case can be deduced from $p$-adic uniformization of Shimura curves. If $F=\bQ$ and $\pi$ is defined over $\bQ$ (i.e.\ if $V_{\pi}$ is the Galois representation associated to an elliptic curve $E/\bQ$) then this has been pointed out already in \cite{teitelbaum}. The proof in the general case does not require any new idea. 

Note that $D$ is a totally definite quaternion algebra, $X$ is a 0-dimensional $\sS$-space and $\Delta =1$. Therefore there exists only one automorphic $\sL$-invariant $\sL(\pi, \psi)$ associated with $\pi$ (and $\psi$ and $\fp$) and we can (and will) drop $\ep$ from the notation. 

We fix an archimedian place $v$ of $F$. Let $\barD$ be the quaternion algebra with $\Ram_{\barD} =\Ram_D \setminus \{v\}\cup \{\fp\}$. Let $\barG$ denote the algebraic group corresponding to $\barD^*/F^*$ and fix an Eichler order $\cO_{\barD}$ of level $\fn$ in $\barD$. We choose isomorphism $\barD_\fq\cong D_\fq$ for every nonarchimedean place $\fq\ne \fp$ that respects the local Eichler orders (i.e.\ $\cO_{\barD, \fq}$ is mapped onto $\cO_{D,\fp}$), so that we can (and will) identify 
$G(\bA_f^{\fp})$ the groups $G(\bA_f^{\fp})$ and $\barG(\bA_f^{\fp})$. Let $\barK_v$ be a maximal compact open subgroup of $\barG_v \cong \PGL_2(\bR)$.  

For a $K_f^{\fp} \subseteq G(\bA_f^{\fp})$ we let $Y = Y(K_f^{\fp})/F$ denote the Shimura curve of level $\barG_{\fp} \times K_f^{\fp}$ associated to $\barD$, i.e.\ the associated compact Riemann surface $Y^{\an}=Y(\bC)$ is given by $Y^{\an}= \barG(F) \backslash \left(\barG_v/\barK_v\times \barG(\bA_f^{\fp})/K_f^{\fp}\right)$. We recall the $\fp$-adic uniformization of $Y$ due to \u{C}erednik \cite{cerednik} if $F=\bQ$ and by Boutot-Zink \cite{boutot-zink} for arbitrary $F$. 

Let $\bC_{\fp} = \widehat{\barF}_{\fp} = \bC_p$ be the completion of the algebraic closure of $F_{\fp}$ and let $Y(K_f^{\fp})^{\rig}$ be the rigid analytic $F_{\fp}$-variety associated to $Y(K_f^{\fp})\otimes F_{\fp}$. For $K_f^{\fp}$ sufficiently small there exists a $\Gal(\bC_{\fp}/F_{\fp})$-equivariant isomorphism of rigid analytic $F_{\fp}$-varieties
\begin{equation}
\label{cerednik}
Y(K_f^{\fp})^{\rig}\, =\, G(F) \backslash \left(\cH_{\fp} \times G(\bA_f^{\fp})/K_f^{\fp}\right)
\end{equation}
where $\cH_{\fp} = \bP^1_{/F_{\fp}}\setminus \bP^1(F_{\fp})$ is the $\fp$-adic upper half plane. Thus the curve $Y(K_f^{\fp})^{\rig}$ is a disjoint union of Mumford curves. This implies that 
the Jacobian $J= \Jac(Y(K_f^{\fp}))$ of $Y(K_f^{\fp})$ has split multiplicative reduction at $\fp$. Using the work of Manin and Drinfeld \cite{mandrin} we can deduce from the isomorphism \eqref{cerednik} an explicit description of the rigid analytic $F_{\fp}$-torus $J^{\rig}$ associated to $J\otimes \bF_{\fp}$ hence also of the Tate module
$T_p(J) = H^1(Y(K_f^{\fp})_{\barQ}, \bZ_p(1))$. We use here the reformulation in (\cite{berggehr} and \cite{dasgupta}) of the theorem of Manin-Drinfeld. 

Firstly, note that if $K_f^{\fp}$ is sufficiently small the $\bZ$-module $H_1(G(F), C_c(G(\bA_f)/K_f^{\fp}, \bZ))$ is free of finite rank. Indeed, let $g_1, \ldots, g_h\in G(\bA_f^{\fp})$ be a system of representatives of $G(F)\backslash G(\bA_f^{\fp})/K_f^{\fp}$ and put $\Gamma_i = G(F) \cap g_i K_f^{\fp}$. If $K_f^{\fp}$ is small enough then the groups $\Gamma_1, \ldots, \Gamma_h$ are finitely generated and free hence 
\[
H_1(G(F), C_c(G(\bA_f)/K_f^{\fp}, \bZ)) \, \cong \, \bigoplus_{i=1}^h H_1(\Gamma_i, \bZ)\, \cong\, \bigoplus_{i=1}^h \Gamma_i^{\ab}
\]
is free-abelian of finite rank. Let $\cT/F_{\fp}$ be the split algebraic torus with character group $H_1(G(F), \break C_c(G(\bA_f)/K_f^{\fp}, \bZ)$). Since the map
\begin{equation*}
\label{caph1}
H^1(G(F), C(G(\bA_f)/K_f^{\fp}, F_{\fp}^*)) \lra \Hom(H_1(G(F), C_c(G(\bA_f)/K_f^{\fp}, \bZ), F_{\fp}^*) 
\end{equation*}
induced by the $\cap$-product is an isomorphism we can identify $H^1(G(F), C(G(\bA_f)/K_f^{\fp}, F_{\fp}^*)$ with set of $F_{\fp}$-points of $\cT$. 

The universal extension of the Steinberg representation \eqref{extsteinuniv} induces 
a homomorphism 
\begin{equation}
\label{deltauniv}
j: H^0(G(F), \cA_{\bZ}(\St_{G_{\fp}}(\bZ), K_f^{\fp}, \bZ)) \lra H^1(G(F), C(G(\bA_f)/K_f^{\fp}, F_{\fp}^*))\, =\, \cT(F_{\fp})
\end{equation}
as follows. Firstly, we note that there exists a canonical map 
\begin{eqnarray}
\label{deltauniv2}
&& \cA_{\bZ}(\St_{G_{\fp}}(\bZ), K_f^{\fp}, \bZ) \, =\, \Hom(\St_{G_{\fp}}(\bZ), \Maps(G(\bA_f^{\fp})/K_f^{\fp}, \bZ))\\
&& \lra \, \prolimn  \Hom(\St_{G_{\fp}}(\bZ)\otimes F_{\fp}^*/U_{\fp}^{(n)}, \Maps(G(\bA_f^{\fp})/K_f^{\fp}, \bZ)\otimes F_{\fp}^*/U_{\fp}^{(n)}) \nonumber \\
&& \lra \,  \prolimn \Hom(\St_{G_{\fp}}(F_{\fp}^*/U_{\fp}^{(n)}), \Maps(G(\bA_f^{\fp})/K_f^{\fp}, F_{\fp}^*/U_{\fp}^{(n)})) \nonumber \\
&& \lra \, \Hom(\prolimn \St_{G_{\fp}}(F_{\fp}^*/U_{\fp}^{(n)}), \prolimn \Maps(G(\bA_f^{\fp})/K_f^{\fp}, F_{\fp}^*/U_{\fp}^{(n)})) \nonumber \\
&&  = \, \Hom(\St_{G_{\fp}, \cont}(F_{\fp}^*), C(G(\bA_f^{\fp})/K_f^{\fp}, F_{\fp}^*))\nonumber 
 \end{eqnarray}
(for the second map note that $\St_{G_{\fp}}(\bZ)\otimes A= \St_{G_{\fp}}(A)$ for any discrete abelian group $A$). The extension \eqref{extsteinuniv} yields a short exact sequence
\begin{eqnarray*}
&& 0 \lra C(G(\bA_f^{\fp})/K_f^{\fp}, F_{\fp}^*) \lra \Hom(\sE_{\univ}, C(G(\bA_f^{\fp})/K_f^{\fp}, F_{\fp}^*))\\
&& \hspace{2cm} \lra \Hom(\St_{G_{\fp}, \cont}(F_{\fp}^*), C(G(\bA_f^{\fp})/K_f^{\fp}, F_{\fp}^*))\lra 0
\end{eqnarray*}
and \eqref{deltauniv} is defined as the composite 
\begin{eqnarray}
\label{deltauniv3}
&& H^0(G(F), \cA_{\bZ}(\St_{G_{\fp}}(\bZ), K_f^{\fp}, \bZ)) \lra H^0(G(F), \Hom(\St_{G_{\fp}, \cont}(F_{\fp}^*), C(G(\bA_f^{\fp})/K_f^{\fp}, F_{\fp}^*)))\\
&& \hspace{2cm} \lra H^1(G(F), C(G(\bA_f)/K_f^{\fp}, F_{\fp}^*))\nonumber
\end{eqnarray}
where the first map is induced by \eqref{deltauniv2} and the second is the connecting homomorphism induced by \eqref{deltauniv3}. 

By (\cite{berggehr}, Thm.\ 4.9) and (\cite{dasgupta}, Thm.\ 2.5) we have an isomorphism of rigid analytic $F_{\fp}$-tori
\begin{equation}
\label{mandrin1}
J^{\rig}\, =\, \Jac(Y(K_f^{\fp}))\, \cong \, \cT^{\rig}/j(\Lambda)
\end{equation}
where $\Lambda = H^0(G(F), \cA_{\bZ}(\St_{G_{\fp}}(\bZ), K_f^{\fp}, \bZ))$. 

We fix $\psi\in \Hom_{\cont}(F_{\fp}^*, \cO)$. As in (\cite{grstevens}, \S 3) we can associate an $\sL$-invariant to $\sL(J^{\rig}, \psi)$ to $J^{\rig}$ and $\psi$ as follows. The homomorphisms $v_{\fp}, \psi$ induce homomorphisms 
\[
(v_{\fp})_*, \psi_* : H^1(G(F), C(G(\bA_f)/K_f^{\fp}, F_{\fp}^*))_E \to H^1(G(F), C(G(\bA_f)/K_f^{\fp}, \cO))_E.
\]
The $\sL$-invariant $\sL(J^{\rig}, \psi)$ is defined as the endomorphism 
\begin{equation}
\label{linvabvar}
\sL(J^{\rig}, \psi): H^0(G(F), \cA_{\bZ}(\St_{G_{\fp}}(\bZ), K_f^{\fp}, \bZ))_E \lra H^0(G(F), \cA_{\bZ}(\St_{G_{\fp}}(\bZ), K_f^{\fp}, \bZ))_E
\end{equation}
such that $\psi_* \circ j_E \circ \sL^{\rig}(\psi) = (v_{\fp})_* \circ j_E$. Note that for every $\psi\in \Hom_{\cont}(F_{\fp}^*, \cO)$ the map $\psi_* \circ j_E$ can be identified with the connecting homomorphism $\delta_{\psi}^0$ in \eqref{extdeltapsi}. So by Remark \ref{remark:ramanunjan} the $\sL$-invariant \eqref{linvabvar} can be defined even if $K_f^{\fp}$ is not small enough. In particular for $K_f^{\fp} = K_0(\fn)^{\fp}$ the endomorphism 
\eqref{linvabvar} is Hecke equivariant and localizing at $\pi$ yields 
\[
\sL(J^{\rig}, \psi)_{\pi} \, =\, \sL(\pi, \psi).
\]
On the other hand \eqref{mandrin1} implies that there exists a short exact sequence of $\fG_{\fp}$-module 
\begin{equation}
\label{cerednikext}
0 \lra T_p(\cT) = \La'_E(1) \lra H^1(Y(K_f^{\fp})_{\barQ}, \bZ_p(1))_E \lra \La_E(0)\lra 0
\end{equation}
where $\La' = \Hom(\bG_m, \cT) = H^1(G(F), C(G(\bA_f)/K_f^{\fp}, \bZ))$. Let $\xi\in \Ext_{E[\fG_{\fp}]}^1(\La_E(0), \La'_E(1))$ denote the class of \eqref{cerednikext}. By applying the Yoneda pairing we obtain a map 
\[
\wcdot \cdot \xi: \Lambda \otimes H^1(F_{\fp}, E(0)) = \Ext_{E[\fG_{\fp}]}^1(E(0), \La_E(0)) \lra \Ext_{E[\fG_{\fp}]}^2(E(0), \La'_E(1)) \cong \La'_E.
\]
By (\cite{grstevens}, Thm.\ 3.11) the diagram 
\begin{equation}
\label{lriglgal}
{\xymatrix@+0.5pc{\Lambda_E\ar[dd]_{\sL^{\rig}(\psi)}\ar[drr]^{\la\mapsto (\la \otimes \partial(v_{\fp}))\cdot \xi}\\
&& \Lambda_{E'}\\
\La_E\ar[urr]^{\la \otimes \partial(\psi))\cdot \xi}
}}
\end{equation}
commutes. If $K_0(\fn)^{\fp}$ is sufficiently small the sequence \eqref{cerednikext} localized at $\pi$ can be identified with \eqref{tateperiod}. Hence the commutativity of \eqref{lriglgal} implies 
\[
\sL(\pi, \psi)\, =\, \sL(J^{\rig}, \psi)_{\pi} \, =\,  \sL(V_{\pi}, \psi).
\]
In the general case this equality still holds as can be seen by choosing an open normal subgroup $K_f^{\fp}$ of $K_0(\fn)^{\fp}$ that is sufficiently small and passing in \eqref{cerednikext} and \eqref{lriglgal} to $K_0(\fn)^{\fp}/K_f^{\fp}$-invariants.

\paragraph{The case $d=1$} We first assume that $D$ is a division algebra. The initial step is to show that the $E$-linear functional \eqref{linv3} does not depend on the character $\ep: \Delta\to \{\pm 1\}$. By Lemma \ref{lemma:linvlin} (a) it suffices to see that for $\psi\in \Hom_{\cont}(F_{\fp}^*, \cO)$ the vanishing of $\sL_{\ep}(\pi, \psi)$ does not depend on $\ep$. 
We fix $\psi$ and consider the long exact sequence 
\begin{eqnarray*}
\label{banreppsi}
&&  \ldots \lra \bH_{\varpi-\ad}^n(X_{\barQ}; E(0), E) \lra \bH_{\varpi-\ad}^n(X_{\barQ}; \wE(\Theta_{\psi}), E) \lra \bH_{\varpi-\ad}^n(X_{\barQ}; E(0), E)\\
&& \hspace{3cm} \stackrel{\delta_{\Theta_{\psi}}^n}{\lra}\bH_{\varpi-\ad}^{n+1}(X_{\barQ}; E(0), E)\lra \ldots 
\nonumber
\end{eqnarray*}
associated to the short exact sequence of $0\to E(0)\to \wE(\Theta_{\psi}) \to E(0)\to 0$ of admissible 
$E$-Banach space representations of $T_{\fp}$ (i.e.\ the sequence \eqref{extstein7} tensored with $\otimes_{\cO} E$). By \eqref{derham5} it can be identified with the sequence \eqref{limEmSS4} as a sequence of Hecke modules. 

The localization $\bH_{\varpi-\ad}^n(X_{\barQ}; E(0), E)_{\pi}$ for $n=1,2$ can be identified with the $\fG=\Gal(\barQ/F)$-representation $V:=V_{\pi}$. Indeed, by Prop.\ \ref{prop:harder} and (\cite{spiess2}, Thm.\ 5.26) $\bH_{\varpi-\ad}^n(X_{\barQ}; E(0), E)_{\pi}$ is a twodimensional $E$-vector space equipped with a continuous $E$-linear $\fG$-action unramified outside the set of primes dividing $p\fd\fn$ such that the Eichler-Shimura relations \eqref{galrephmf} hold.
The uniqueness of $V$ thus implies 
\[
\bH_{\varpi-\ad}^1(X_{\barQ}; E(0), E)_{\pi}\, \cong \, V\, \cong \,\bH_{\varpi-\ad}^2(X_{\barQ}; E(0), E)_{\pi}.
\]
By using the fact that the $E[\fG]$-module $V$ is simple we see that 
\begin{equation}
\label{connecet}
(\delta_{\Theta_{\psi}})_{\pi}: \bH_{\varpi-\ad}^1(X_{\barQ}; E(0), E)_{\pi} \lra \bH_{\varpi-\ad}^2(X_{\barQ}; E(0), E)
\end{equation}
is either injective or $=0$. Therefore (in the case $d=1$) the vanishing of \eqref{connecet}, hence also 
of \eqref{connecan} and of $\sL_{\ep}(\pi, \psi)$ is independent of the character $\ep: \Delta\to \{\pm 1\}$ so we will drop $\ep$ from the notation.

Since $\sL(\pi, v_{\fp}) = 1= \sL(V, v_{\fp})$ and $\sL(\pi, \wcdot)$, $\sL(V, \wcdot)$ are $E$-linear functionals to finish the proof it suffices to show that for $\psi\in\Hom_{\cont}(F_{\fp}^*, \cO)$ with $\sL(\pi, \psi)=0$ we also have $\sL(V, \psi)=0$, i.e.\ $\partial(\psi) \cup \xi(V) =0$. Put $\wV = \bH_{\varpi-\ad}^1(X_{\barQ}; \wE(\Theta_{\psi}), E)_{\pi}$, $\wV^0 = \bH_{\varpi-\ad}^1(X_{\barQ}; \wE(\Theta_{\psi}), E)^0_{\pi}$, $\wV^{\et}= \bH_{\varpi-\ad}^1(X_{\barQ}; \wE(\Theta_{\psi}), E)^{\et}_{\pi}$, 
$V^0= \bH_{\varpi-\ad}^1(X_{\barQ}; E(0), E)^0_{\pi}$, $V^{\et}:= \bH_{\varpi-\ad}^1(X_{\barQ}; E(0), E)^{\et}_{\pi}$
so that we obtain a diagram 
\begin{equation}
\label{deformgalrep}
\begin{CD} 
@. 0 @. 0 @. 0 @.\\
@. @VVV @VVV @VVV @.\\
0@>>> V^0 @>>> V @>>> V^{\et} @>>> 0\\
@. @VVV @VVV @VVV @.\\
0@>>> \wV^0 @>>> \wV @>>> \wV^{\et} @>>> 0\\
@. @VVV @VVV @VVV @.\\
0@>>> V^0 @>>> V @>>> V^{\et} @>>> 0\\
@. @VVV @VVV @VVV @.\\
@. 0 @. 0 @. 0 @.\\
\end{CD}
\end{equation}
By (\cite{spiess2}, Thm.\ 5.26) the rows are exact and we have $V^0=E(1)$, $V^{\et} = E(0)$ (i.e.\ the first and third row can be identified with the sequence \eqref{tateperiod}). Moreover $\fG_{\fp}$ acts on the $\wE$-modules $\wV^0$ and $\wV^{\et}$ via the characters $\wchi\cdot \chi_{\cycl}$ and $\wchi^{-1}$ respectively. Here $\wchi: \fG_{\fp} \to \wcO^*$ denotes the character of $\fG_{\fp}$ that corresponds to $\Theta_{\psi}$ under the reciprocity map, i.e.\ we have $\wchi(\sigma)= 1+ (\partial\psi)(\sigma)\ep$ for every $\sigma\in \fG_{\fp}$. 

By Prop.\ \ref{prop:linvinlin2} the $\wE$-module $\wV$ is a free of rank $2$ and the middle column of \eqref{deformgalrep} is exact. We claim that the first and third column is exact as well. If we view them as complexes of $\wE[\fG_{\fp}]$-modules denoted by $C^1_{\bu}$ and $C^3_{\bu}$ then the exactness of the middle column implies 
$H_n(C^3_{\bu})\cong H_{n-1}(C^1_{\bu})$ for every $n$. However the $\fG_{\fp}$-action on $H_n(C^1_{\bu})$ and $H_{n-1}(C^3_{\bu})$ are not compatible: on the first group $\fG_{\fp}$ acts via the character $\wchi\cdot \chi_{\cycl}$ 
and on the second via $\wchi^{-1}$. It follows $H_n(C^3_{\bu})=0= H_{n-1}(C^1_{\bu})$ for every $n$, i.e.\ the rows of  \eqref{deformgalrep} are exact. In particular we see that $\wV^0$ and $\wV^{\et}$ are free of rank 1 as $\wE$-modules hence $\wV^0= \wE(\wchi\cdot \chi_{\cycl})$ and $\wV^0= \wE(\wchi^{-1})$.

By twisting the $\fG_{\fp}$-action of each $\wE[\fG_{\fp}]$-module in \eqref{deformgalrep} with $\wchi^{-1}$ we see that the first and third row remain the same whereas the middle row becomes
\[
0 \lra \wE(1)\lra \wV' := \wV(\wchi^{-1}) \lra \wE(\wchi^{-2})\lra 0.
\]
Now we follow the argument in (\cite{grstevens}, proof of Thm.\ 3.14). Consider the diagram of Galois cohomology groups 
\[
\begin{CD}
@.@. H^0(F_{\fp}, E(0))@.\\
@.@.@VV\delta V@. \\
H^1(F_{\fp}, E(1)) @>>> H^1(F_{\fp}, V) @>> > H^1(F_{\fp}, E(0)) @>> \delta' > H^2(F_{\fp}, E(1)) \\
@VVV@VVV@VV\io_2 V@VV\io_1 V\\
H^1(F_{\fp}, \wE(1)) @>>> H^1(F_{\fp}, \wV') @>>> H^1(F_{\fp}, \wE(0)) @>>> H^2(F_{\fp}, \wE(1)) \\
\end{CD}
\]
induced by diagram \eqref{deformgalrep} (twisted by $\wchi^{-1}$). A simple computation shows that the image of $1\in E=H^0(F_{\fp}, E(0))$ under $\delta$ is equal to $-2 \partial(\psi)$. 

Since $H^2(F_{\fp}, \bQ(1))=\bQ_p$ the map $\io_1$ can be identified with $E \to \wE, x\mapsto x\ep$ hence it is injective. Since $\io_2\circ \delta=0$ we also have $\io_1\circ \delta'\circ \delta =0$ hence $\delta'\circ \delta=0$. It follows 
\[
0\, =\, \delta'\circ \delta(1) \, =\, -2 \,\delta'(\partial(\psi)) \, =\, \pm 2 \,\xi\cup \partial(\psi)
\]
and therefore $\xi\cup \partial(\psi)=0$.

Finally, it remains to consider the case $F=\bQ$ and $D= M_2(F)$. The above proof can be easily adapted to this case using  (\cite{spiess2}, Thm.\ 5.34) instead of (\cite{spiess2}, Thm.\ 5.26) and by noting that
\begin{eqnarray} 
\label{h1modcurve6}
&& \bH_{\varpi-\ad}^1(X_{\barQ}; E(0), E)_{\pi}\, =\, \bH_{\varpi-\ad, !}^1(X_{\barQ}; E(0), E)_{\pi},\\
&& \bH_{\varpi-\ad}^1(X_{\barQ}; \wE(\Theta_{\psi}), E)_{\pi}\, =\, \bH_{\varpi-\ad,!}^1(X_{\barQ}; \wE(\Theta_{\psi}), E)_{\pi}.\nonumber
\end{eqnarray}
Indeed, by (\cite{spiess2}, Prop.\ 5.30) all terms of the sequence 
\[
0 \lra \Hom_{E[T_{\fp}]}(E(0) ,C^1_E)_{\pi} \lra \Hom_{E[T_{\fp}]}(\wE(\Theta_{\psi}) ,C^1_E)_{\pi} \lra \Hom_{E[T_{\fp}]}(E(0) ,C^1_E)_{\pi}
\]
vanish. Therefore by localizing the sequence (see \cite{spiess2}, \S 5.7)
\begin{equation*}
\label{h1modcurve3}
0\lra \bH_{\varpi-\ad, !}^1(X_{\barQ}; A(\chi), E) \lra \bH_{\varpi-\ad}^1(X_{\barQ}; A(\chi), E) \lra \Hom_{E[T_{\fp}]}(A(\chi^{-1}),C^1_E).
\end{equation*}
at $\pi$ for $A(\chi) = E(0)$ and $A(\chi) = \wE(\Theta_{\psi})$ yields \eqref{h1modcurve6}. \enddemo

\section{Jacquet-Langlands functoriality for automorphic $\sL$-invariants}
\label{section:mainthm}

In this section we assume that $d\ge 2$. Up to isomorphism there exists a unique quaternion algebra $\barD$ with 
\begin{equation}
\label{prinunits}
\Ram_{\barD} \, = \, \left\{\begin{array}{cc} \Ram_D\cup \,\Sigma & \mbox{if $d$ is even;}\\
\Ram_D\cup \{\sigma_2, \ldots, \sigma_d\} & \mbox{if $d$ is odd.} 
\end{array}\right.
\end{equation}
Let $\bar{d}$ be the number of archimedean places $v$ that split $\barD$ (thus $\bar{d}\in \{0,1\}$ and $\bar{d}\equiv d \!\!\!\mod 2$).

We denote the algebraic group corresponding to $\barD^*/F^*$ by $\barG$ and put $\barDelta = \barG(F)/\barG(F)_+ \cong \{\pm 1\}^{\bar{d}}$. Let $\cO_{\barD}$ be an Eichler order of level $\fn$ in $\barD$. We choose an isomorphism $\barD_v\cong D_v$ for every nonarchimedean place $v$ that respects the local Eichler orders (i.e.\ $\cO_{\barD, v}$ is mapped onto $\cO_{D,v}$). This allows us to view the level $K_0(\fn)^{\fp}\subseteq G(\bA_f^{\fp})$ as a level in $\barG(\bA_f^{\fp})$ and also to view $\sK$ as a subset of the set of compact open subgroups of $\barG_{\fp}$. The associated $\sS$-space (resp.\ $\sS$-scheme if $\bar{d}= 1$) will be abbreviated by $\barX=X^{\barD}_0(\fn)^{\fp}$. 

By the Jacquet-Langlands correspondence there exists an automorphic representation $\JL(\pi) = \pi' = \otimes_v \, \pi'_v$ such that $\pi'_v\cong \pi_v$ for all places $v$ of $F$ where $\barD_v\cong D_v$ and so that $\pi'_v$ is the trivial representation of $\barG_v$ for the places $v$ where $\barD_v\not\cong D_v$ (i.e.\ for the places in $\Sigma$ resp.\ in $\{\sigma_2, \ldots, \sigma_d\}$ if $\bar{d}=0$ resp.\ if $\bar{d} =1$). The main result in this section is

\begin{theo}
\label{theo:linvjl}
We have $\sL_{\ep}(\pi, \psi) = \sL(\pi', \psi)$ for every $\psi\in \Hom_{\cont}(F_{\fp}^*, \cO)$ and every character $\ep: \Delta\to \{\pm 1\}$.
\end{theo}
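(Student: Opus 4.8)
The plan is to reduce the comparison for $D$ (with $d\ge 2$ archimedean split places) to the comparison for $\barD$ (with $\bar d\in\{0,1\}$), where Theorem \ref{theo:linvmttautom} is already known by the cases $d=0$ and $d=1$ treated in Section \ref{section:shimuracurve}. Concretely, since both $\sL_\ep(\pi,\wcdot)$ and $\sL(\pi',\wcdot)$ are $E$-linear functionals on $\Hom_{\cont}(F_\fp^*,E)$ sending $v_\fp$ to $1$ (Lemma \ref{lemma:linvlin}(a) and Remark \ref{remark:extsteinfunc} applied to $\pi'$), it suffices to compare their kernels; and by Proposition \ref{prop:linvinlin2} the vanishing of $\sL_\ep(\pi,\psi)$ is encoded in the rank of $\cH^n(\Ind_{B_\fp,\cont}^{G_\fp}\wcO(\Theta_\psi))_{\pi,\ep}$ for $n=d,d+1$, and analogously for $\pi'$ in degrees $\bar d,\bar d+1$ over $\barG$. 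So the real task is to produce a Hecke-equivariant comparison between the relevant cohomology groups for $G$ and $\barG$.

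The main tool, as announced in the introduction, is the $\sS$-variety cohomology group
\[
\bH_{\varpi-\ad}^d\big((X^D)^{\an};\ \fOrd_{\varpi-\ad}^{\bar d}((X^{\barD})^{\an},\cO)_E,\ E\big),
\]
where $\fOrd_{\varpi-\ad}^{\bar d}((X^{\barD})^{\an},\cO)_E$ is the admissible $E$-Banach $T_\fp$-representation from (\cite{spiess2}, \S4.1) built from $\barX$. First I would set up a Hochschild--Serre / Leray-type spectral sequence (or composition-of-functors identity, in the style of \eqref{derham3}--\eqref{derham6} and the constructions of \cite{spiess2}, \S4) relating the cohomology of the $d$-dimensional $\sS$-manifold $(X^D)^{\an}$ with coefficients in this ordinary-part Banach representation of $\barX$ to the cohomology of $\barX$ itself. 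The point is that $(X^D)^{\an}$ and $(X^{\barD})^{\an}$ share the same prime-to-$\fp$ adelic data and the same $G_\fp=\PGL_2(F_\fp)=\barG_\fp$-tower, so the two sides are linked through a single $G_\fp$-equivariant coefficient system; taking $\varpi$-adic ordinary parts in the $\bar d$ archimedean directions of $\barD$ and then $\varpi$-adic cohomology in the $d$ archimedean directions of $D$ should recover, after localizing at $\pi$ via Jacquet--Langlands $\la_\pi=\la_{\pi'}$, the group $\cH^{d+\bar d-?}(\cdots)$ computed on either side. Concretely I expect an identification, compatible with the connecting maps $\delta_{\Theta_\psi}$, of the $\pi,\ep$-localized long exact sequence \eqref{limEmSS4} for $D$ with the corresponding $\pi',\ep$-localized sequence for $\barD$; combined with Proposition \ref{prop:linvinlin2} (and its evident $\barD$-analogue) this yields $\sL_\ep(\pi,\psi)=0 \Leftrightarrow \sL(\pi',\psi)=0$, and then $E$-linearity plus the normalization at $v_\fp$ upgrades this to the equality \eqref{ljaclang}.

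In carrying this out, the key intermediate steps, in order, are: (1) recall/establish the precise functoriality of $\fOrd_{\varpi-\ad}^{\bar d}$ and the spectral sequence computing $\bH_{\varpi-\ad}^\bu((X^D)^{\an}; \fOrd_{\varpi-\ad}^{\bar d}((X^{\barD})^{\an},\cO)_E,E)$ in terms of $\bH_{\varpi-\ad}^\bu(X^{\barD}_{\barQ};\cdots,E)$; (2) show that after localizing at $\pi$ (using that $\pi_\fp$ and $\pi'_\fp$ are both Steinberg, that $\la_\pi=\la_{\pi'}$ on the spherical Hecke algebra, and the multiplicity-one/Harder-type vanishing from Proposition \ref{prop:harder} and \cite{spiess2}) the spectral sequence degenerates and isolates a single degree; (3) check Hecke- and $T_\fp$-equivariance so that the isomorphism intertwines the two parabolic-induction long exact sequences and their connecting homomorphisms; (4) feed this into Proposition \ref{prop:linvinlin2} and its $\barD$-analogue. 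The step I expect to be the main obstacle is (1)--(2): controlling the $\pi$-localized ordinary-part cohomology of the $\sS$-variety $X^{\barD}$ well enough to know that the resulting Banach $T_\fp$-representation is, after localization, exactly the $\pi'$-part of $\fOrd$ of a Steinberg-at-$\fp$ form on $\barG$ (so that \eqref{cohomautom1} and \eqref{cohomautom4} apply verbatim on the $\barD$ side), and verifying the degeneration of the spectral sequence — here one must use the vanishing statements of \cite{spiess2} together with the fact that outside degree $\bar d$ the ordinary cohomology of $\barX$ localized at $\pi'$ vanishes, which is the $\barD$-incarnation of Proposition \ref{prop:harder}.
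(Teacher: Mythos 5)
Your reduction (both sides are $E$-linear in $\psi$, both send $v_{\fp}$ to $1$, and Proposition \ref{prop:linvinlin2} converts vanishing of $\sL_{\ep}(\pi,\psi)$ into a rank statement for $\bH_{\varpi-\ad}^{\bu}(X^{\an};\wE(\Theta_{\psi}),E)_{\pi,\ep}$) is on target, and you correctly single out the coefficient object $\fOrd_{\varpi-\ad}^{\bar d}((X^{\barD})^{\an},\cO)_E$ from the introduction. But the comparison mechanism you build on it --- your steps (1)--(2), a Hochschild--Serre/Leray-type spectral sequence or composition-of-functors identity computing $\bH_{\varpi-\ad}^{\bu}((X^{D})^{\an};\fOrd_{\varpi-\ad}^{\bar d}((X^{\barD})^{\an},\cO)_E,E)$ in terms of $\bH_{\varpi-\ad}^{\bu}(X^{\barD};\cdot,E)$ and identifying the $(\pi,\ep)$-localized sequence \eqref{limEmSS4} for $D$ with its $\barD$-analogue --- does not exist. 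The towers $X^{D}$ and $X^{\barD}$ are attached to different groups at the archimedean places; the only bridge between their cohomologies is the Hecke action, i.e.\ Jacquet--Langlands, which is not produced by any functorial composition. If such a degree-by-degree identification were available, the transfer of $\sL$-invariants would be formal; the entire difficulty is that at the point of interest ($\pi_{\fp}$ Steinberg, weight $2$) no direct identification of the two localized long exact sequences can be made, so the step you yourself flag as the main obstacle is in fact a genuine gap rather than a technical verification.

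What the paper does instead is an interpolation argument along a one-parameter family attached to $\psi$. One first restricts to $\psi$ with $V=\Ker(\psi)\cap U_{\fp}$ satisfying $U_{\fp}/V\cong\bZ_p$ (Lemma \ref{lemma:linvkey}; the general case follows by a basis argument choosing $\psi_i$ with $\sL(\pi',\psi_i)=0$ --- a reduction absent from your plan). Then $\cV=\fOrd_{\varpi-\ad}^{V,\bar d}(\barX^{\an},\cO)_{E,\barep}$ is a module over the Iwasawa algebra $\La_E\cong E\otimes\cO[\![T]\!]$, $R$ is the image of $\bT_E\otimes\La_{\cO}(\barT_{\fp})_E$ in $\End(\cV)$, and $\fX=\Spec R$ is finite flat over $\Spec\La_E$. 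Lemma \ref{lemma:piinfx} produces a point $x_0$ attached to $\pi'$ at which $\fX\to\Spec\La_E$ is {\'e}tale and --- this is where the hypothesis $\sL(\pi',\psi)=0$ enters, via the $\barD$-side instance of Proposition \ref{prop:linvinlin2} --- an isomorphism $\cV[\fm_0^2]\cong\wE(\Theta_{\psi})$. One then studies $\bH_{\varpi-\ad}^{d}(X^{\an};\cV,E)_{\ep}$ as a coherent module on $\fX$: its $\la$-quotient $\cN$ has fibre dimension $\ge 1$ at classical points (dense by Lemma \ref{lemma:classicalpoints}; this is where classical Jacquet--Langlands is used, at principal-series points, not at $\pi$ itself) and $\le 1$ at $x_0$, so by upper semicontinuity it is invertible near $x_0$ (Lemma \ref{lemma:jacquetlanglandsfam}). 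This yields $\bH_{\varpi-\ad}^{d}(X^{\an};\wE(\Theta_{\psi}),E)_{\pi,\ep}\cong\cN\otimes_R R/\fm_0^2\cong\La_E/\fa^2\cong\wE$, and Proposition \ref{prop:linvinlin2} then gives $\sL_{\ep}(\pi,\psi)=0$. So the missing idea in your proposal is precisely this deformation/rigidity mechanism (spread out over weight space, input only at classical points, conclude infinitesimally at $x_0$ via {\'e}taleness); without it, and with the nonexistent spectral sequence in its place, the argument cannot be completed.
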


In particular we see that the automorphic $\sL$-invariant $\sL_{\ep}(\pi, \psi)$ is independent of the choice of the character $\ep$. In the previous section we have shown that Theorem \ref{theo:linvmttautom} holds for $\pi'$ (since $\bar{d} \le 1$). Since the $\varpi$-adic Galois representation attached to $\pi$ and $\pi'$ are the same this implies $\sL_{\ep}(\pi, \psi)= \sL(V_{\pi}, \psi)$ thus completing the proof of Thm.\ \ref{theo:linvmttautom}.

Theorem \ref{theo:linvjl} follows from the following seemingly weaker assertion 

\begin{lemma}
\label{lemma:linvkey}
Let $\psi\in \Hom_{\cont}(F_{\fp}^*, \cO)$ and let $V= \ker(\psi)\cap U_{\fp}$. Assume that 
$\sL(\pi', \psi) =0$ and $U_{\fp}/V\cong \bZ_p$.
Then we have $\sL_{\ep}(\pi, \psi)=0$ for every character $\ep: \Delta\to \{\pm 1\}$.
\end{lemma}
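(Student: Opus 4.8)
The plan is to reduce the assertion, via Proposition~\ref{prop:linvinlin2}, to a freeness question over the ring of dual numbers $\wE$, and then to match that question on the $D$-side and the $\barD$-side by means of the cohomology of $X^D$ with coefficients built out of the ordinary cohomology of $\barX=X^{\barD}$. First, since $\pi'$ satisfies all the running hypotheses of Section~\ref{section:linv} (in particular $\pi'_\fp\cong\St_{\barG_\fp}(\bC)$, because $\barD_\fp\cong D_\fp$), Proposition~\ref{prop:linvinlin2} applies verbatim with $D$ replaced by $\barD$ and $\pi$ by $\pi'$: it shows that $\sL_{\ep'}(\pi',\psi)=0$ — equivalently $\sL(\pi',\psi)=0$, the character $\ep'$ playing no role since $\bar d\le1$ and that case has already been treated — is equivalent to $\bH^{\bar d}_{\varpi-\ad}((X^{\barD})^{\an};\wE(\Theta_\psi),E)_{\pi',\ep'}$ being free of rank $1$ over $\wE$. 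Dually, by Proposition~\ref{prop:linvinlin2} for $(D,\pi)$ it suffices to prove that $\bH^{d}_{\varpi-\ad}((X^D)^{\an};\wE(\Theta_\psi),E)_{\pi,\ep}$ is free of rank $1$ over $\wE$ for every character $\ep\colon\Delta\to\{\pm1\}$.

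The hypothesis $U_\fp/V\cong\bZ_p$ (with $V=\ker(\psi)\cap U_\fp$) enters in the construction of the bridge between these two freeness statements: writing $\barT=T_\fp/V$, it guarantees that $\wcO(\Theta_\psi)$ descends to a module over $\Lambda_\cO(\barT)\cong\cO[\![U_\fp/V]\!][t_0^{\pm1}]$ that is cyclic, so that the ordinary-parts formalism of \cite{spiess2}, \S4 can be applied to coefficient systems twisted by $\Theta_\psi$ while keeping the relevant cohomology admissible. One then works with the admissible $E$-Banach representation $\fOrd^{\bar d}_{\varpi-\ad}((X^{\barD})^{\an},\cO)_E$ of $T_\fp$ and with the mixed cohomology groups $\bH^{\bu}_{\varpi-\ad}((X^D)^{\an};\fOrd^{\bar d}_{\varpi-\ad}((X^{\barD})^{\an},\cO)_E\,\widehat{\otimes}_E\,W,E)$ for $W\in\{E(0),\wE(\Theta_\psi)\}$. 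Using the adjunction between continuous parabolic induction and ordinary parts, together with the concentration‑in‑degree results (Proposition~\ref{prop:harder} for $X^D$ and its analogue for $\barX$, the latter accounting for the $d-\bar d$ archimedean places that become ramified in $\barD$), one aims to obtain a Hecke- and $\Delta$-equivariant isomorphism
\[
\bH^{d}_{\varpi-\ad}((X^D)^{\an};W,E)_{\pi}\;\cong\;E[\{\pm1\}^{\,d-\bar d}]\otimes_E\bH^{\bar d}_{\varpi-\ad}((X^{\barD})^{\an};W,E)_{\pi'}
\]
for $W\in\{E(0),\wE(\Theta_\psi)\}$, compatible with the connecting homomorphisms attached to $\Theta_\psi$ and intertwining the $\Delta$-action on the left with the action of $\{\pm1\}^{\,d-\bar d}\times\barDelta\cong\Delta$ on the right.

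Granting this, the proof concludes quickly: localizing the displayed isomorphism at a character $\ep$ of $\Delta$ and the induced character $\ep'$ of $\barDelta$ removes the factor $E[\{\pm1\}^{\,d-\bar d}]$ and yields an isomorphism of $\wE$-modules $\bH^{d}_{\varpi-\ad}((X^D)^{\an};\wE(\Theta_\psi),E)_{\pi,\ep}\cong\bH^{\bar d}_{\varpi-\ad}((X^{\barD})^{\an};\wE(\Theta_\psi),E)_{\pi',\ep'}$; since freeness of rank $1$ over $\wE$ is preserved, the first paragraph shows that $\sL(\pi',\psi)=0$ forces the left‑hand side to be free of rank $1$, i.e.\ $\sL_\ep(\pi,\psi)=0$, for every $\ep$.

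The main obstacle is the construction and analysis of the bridge itself. One must introduce the mixed cohomology $\bH^{\bu}_{\varpi-\ad}((X^D)^{\an};\fOrd^{\bar d}_{\varpi-\ad}((X^{\barD})^{\an},\cO)_E\,\widehat{\otimes}_E\,W,E)$ and establish the admissibility and finiteness needed to make sense of it (this is the point where $U_\fp/V\cong\bZ_p$ is used), set up a composite‑functor spectral sequence relating it to the cohomology of $\barX$ with parabolically induced coefficients, verify its degeneration on $\pi$-isotypic components by means of the two versions of Proposition~\ref{prop:harder}, and — the most delicate part — keep track throughout of compatibility with the twist by $\Theta_\psi$, with the connecting maps $\delta_{\Theta_\psi}$, and with the bookkeeping of the $\Delta$- and $\barDelta$-actions identifying $\{\pm1\}^{\,d-\bar d}\times\barDelta$ with $\Delta$.
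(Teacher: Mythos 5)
Your opening reduction is the same as the paper's: Proposition~\ref{prop:linvinlin2}, applied once for $(\barD,\pi')$ in degree $\bar d$ and once for $(D,\pi)$ in degree $d$, converts the hypothesis and the conclusion into freeness-of-rank-one statements over $\wE$ for $\bH^{\bar d}_{\varpi-\ad}(\barX^{\an};\wE(\Theta_\psi),E)_{\pi'}$ and $\bH^{d}_{\varpi-\ad}(X^{\an};\wE(\Theta_\psi),E)_{\pi,\ep}$, and you correctly guess that the bridge involves the mixed cohomology $\bH^{\bu}_{\varpi-\ad}(X^{\an};\fOrd^{\bar d}_{\varpi-\ad}(\barX^{\an},\cO)_E,E)$. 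But the central step of your plan --- the Hecke- and $\Delta$-equivariant isomorphism $\bH^{d}_{\varpi-\ad}(X^{\an};W,E)_{\pi}\cong E[\{\pm1\}^{d-\bar d}]\otimes_E\bH^{\bar d}_{\varpi-\ad}(\barX^{\an};W,E)_{\pi'}$ for $W=\wE(\Theta_\psi)$, compatible with the connecting maps --- is exactly the content to be proved, and the mechanism you propose for it (an adjunction between continuous parabolic induction and ordinary parts plus a ``composite-functor spectral sequence relating it to the cohomology of $\barX$'') does not exist: there is no geometric morphism or formal functorial relation between the two $\sS$-spaces $X^D$ and $X^{\barD}$, and the only available comparison is the classical Jacquet--Langlands correspondence, which applies to classical (locally algebraic) specializations, not to the non-classical coefficient $\wE(\Theta_\psi)$ for a general $\psi$. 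Note also that if your displayed isomorphism held for all $\psi$ it would immediately yield Theorem~\ref{theo:linvjl} in full, which is strictly stronger than the lemma; this is a sign that it cannot be produced by soft arguments.

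This is precisely why the paper's proof goes through a $p$-adic family rather than a fixed-weight comparison. The hypothesis $U_\fp/V\cong\bZ_p$ is not there to keep $\wcO(\Theta_\psi)$ admissible or cyclic; it makes $\La_E=\cO[\![U_\fp/V]\!]_E$ a one-variable Iwasawa algebra (a PID), so that $\fX=\Spec R$, with $R$ generated by the Hecke and torus actions on $\cV=\fOrd^{V,\bar d}_{\varpi-\ad}(\barX^{\an},\cO)_{E,\barep}$, is finite flat over a regular curve. The paper then (i) uses $\sL(\pi',\psi)=0$, via Proposition~\ref{prop:linvinlin2} on the $\barD$-side, to produce an embedding $\wE(\Theta_\psi)\cong\cV[\fm_0^2]$ at the point $x_0$ attached to $\pi'$, where $\fX\to\Spec\La_E$ is \'etale (Lemma~\ref{lemma:piinfx}); (ii) proves density of classical points (Lemma~\ref{lemma:classicalpoints}); (iii) shows that the $D$-side cohomology of the whole family, $\cN=(\bH^d_{\varpi-\ad}(X^{\an};\cV,E)_{\ep})_{\la}$, is an invertible sheaf near $x_0$, by comparing fibre dimensions at classical points --- where the genuine Jacquet--Langlands correspondence and the classical computation of Proposition~\ref{prop:harder}-type give the lower bound --- together with semicontinuity and the upper bound at $x_0$ (Lemma~\ref{lemma:jacquetlanglandsfam}); and only then (iv) specializes modulo $\fm_0^2$ to transport the infinitesimal deformation to the $D$-side and conclude that $\bH^d_{\varpi-\ad}(X^{\an};\wE(\Theta_\psi),E)_{\pi,\ep}\cong R/\fm_0^2\cong\wE$ is free of rank one. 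Your proposal is missing this interpolation-from-classical-points argument, and without it the bridge you need cannot be established.
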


\begin{proof}[Proof of Thm.\ \ref{theo:linvjl} assuming Lemma \ref{lemma:linvkey}] We fix a  character $\ep: \Delta\to \{\pm 1\}$. Note that the $E$-vector space $\Hom_{\cont}(F_{\fp}^*, E)$ has dimension $r+1$ where $r:= [F_{\fp}:\bQ_p] = \dim(U_{\fp}^{(1)}\otimes_{\bZ_p}\bQ_p)$. We choose a specific basis of $\Hom_{\cont}(F_{\fp}^*, E)$ as follows. Let $V_1, \ldots, V_r$ be closed subgroups of $U_{\fp}$ with $U_{\fp}/V_i\cong \bZ_p$ and so that $V_1\cap\ldots \cap V_r$ is the torsion subgroup of $U_{\fp}$. We can find $\psi_1', \ldots, \psi_r' \in \Hom_{\cont}(F_{\fp}^*, \cO)$ so that $\Ker(\psi_i') \cap U_{\fp}= V_i$ for $i=1, \ldots, r$. Then $\psi_1', \ldots, \psi_r', v_{\fp}$ form a basis of $\Hom_{\cont}(F_{\fp}^*, E)$. If we put $\psi_i = \psi_i' - \sL(\pi', \psi_i')\cdot v_{\fp}$ then we have 
 $\Ker(\psi_i) \cap U_{\fp}= \Ker(\psi_i') \cap U_{\fp}= V_i$ and
 \[
\sL(\pi', \psi_i) \, =\, \sL(\pi', \psi_i') - \sL(\pi', \psi) \cdot \sL(\pi', v_{\fp}) \, =\, 0
\]
for $i=1, \ldots, r$ by Lemma \ref{lemma:linvlin}. Therefore $\psi_1, \ldots, \psi_r, v_{\fp}$ form a basis of $\Hom_{\cont}(F_{\fp}^*, E)$ so that $\sL_{\ep}(\pi, \psi_i) =0 = \sL(\pi', \psi_i)$ for $i=1, \ldots, r$ and $\sL_{\ep}(\pi, v_{\fp}) = \sL(\pi', v_{\fp}) = 1$
by Lemmas \ref{lemma:linvlin} (a) and \ref{lemma:linvkey}. Using the fact that both types of $\sL$-invariants are linear as functions of $\psi$ we obtain $\sL_{\ep}(\pi, \psi) = \sL(\pi', \psi)$ for every $\psi\in\Hom_{\cont}(F_{\fp}^*, E)$.
\end{proof}

The proof of Lemma \ref{lemma:linvkey} requires some preparation. We put $\barT_{\fp} = T_{\fp}/V$ and 
consider the Iwasawa algebra 
\begin{equation}
\label{iwasawa}
\La \, =\,  \cO[\![U_{\fp}/V]\!] \, =\, \prolim_n \cO[U_{\fp}/V U_{\fp}^{(n)}].
\end{equation}
The augmentation ideal of $\La_E$, i.e.\ the kernel of the canonical projection $\aug: \La_E\to E$, will be denote by $\fa$. Since $U_{\fp}/V\cong \bZ_p$ we see that $\La$ is non-canonical isomorphic to the power series ring $\cO[\![T]\!]$ in one variable over $\cO$. In particular we note that $\La_E$ is a principal ideal domain. 

We fix a character $\barep: \barDelta\to \{\pm 1\}$ and consider the following $E[\barT_{\fp}]$-module
\begin{equation}
\label{modforms1}
\cV\, =\, \fOrd_{\varpi-\ad}^{V, \bar{d}}(\barX^{\an}, \cO)_{E, \barep}. 
\end{equation}
By (\cite{spiess2}, Prop.\ 5.19 (b) and Prop.\ 5.20) the vector space $\cV$ is an admissible $E$-Banach space representation of $\barT_{\fp}$ and its dual $\cM =\cD(\cV)\in \Mod_E^{\fgaug}(\barT_{\fp})$ is free and of finite rank as a $\La_E$-module. The $\La_{\cO}(\barT_{\fp})_E$-module $\cV$ is equipped additionally with an action of the 
Hecke algebra $\bT_E$ commuting with the $\La_{\cO}(\barT_{\fp})_E$-action. 
Let 
\begin{equation}
\label{heckeimage}
R\, =\, \Image(\bT_E\otimes_E \La_{\cO}(\barT_{\fp})_E \lra \End_{\Ban_E^{\adm}(\barT_{\fp})}(\cV)).
\end{equation}
If $r_0=\rank_{\La_E} \cM$ then using the duality functor \eqref{pontrajagin4} yields
\[
R\, \subseteq \,\End_{\Ban_E^{\adm}(\barT_{\fp})}(\cV) \, \cong \, \End_{\Mod_{E}^{\fgaug}(\barT_{\fp})}(\cM)\subseteq \End_{\La_E}(\cM) \, \cong  \, \La_E^{r_0^2}.
\]
Hence $R$ is a finite and torsionfree, hence flat $\La_E$-algebra. We put $\fX = \Spec R$ and let $q: \fX\to \Spec \La_E$ be the induced finite flat morphism. We denote by 
\begin{equation}
\label{heckeimage2}
\la: \bT_E \lra R_E 
\end{equation}
the canonical $E$-algebra homomorphism and by 
\begin{equation}
\label{heckeimage3}
\chi: F_{\fp}^* \, \cong \,T_{\fp} \lra \barT_{\fp} \lra R^*
\end{equation}
the canonical (continuous) character. Let $|\fX|$ be the set of closed points of $\fX$. For $x\in |\fX|$ with residue field $k(x)$ we put 
\[
\begin{CD}
\la_x : =\, \la \!\! \mod x:  \bT_E@>\la >> R @> \!\mod x >> k(x)
\end{CD} 
\]
and 
\[
\begin{CD}
\chi_x : =\, \chi \!\! \mod x:  \bT_E@>\chi >> R @> \! \mod x >> k(x)^*.
\end{CD} 
\]

\begin{lemma}
\label{lemma:piinfx}
(a) There exists a point $x_0=\fm_0\in |\fX|$ that is associated to $\pi$, i.e\ we have, $\chi_{x_0} = 1$, $\la_{x_0}=\la_{\pi}$ and $k(x_0) = E$. The point $x_0$ lies above $\fa$ and $\fX\to \Spec \La_E$ is {\'e}tale in $x_0$. Moreover the $R_{\fm_0}$-module $\cM_{\fm_0}$ is free of rank 1.

\noi (b) We have $\cV[\fm_0^2]\, \cong \, \wE[\Theta_{\psi}]$ as $E$-Banach space representations of $\barT_{\fp}$.
\end{lemma}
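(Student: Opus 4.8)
The plan is to realise $x_0$ as the point of $\fX$ cut out by the eigensystem of $\pi'$ inside the ``boundary'' fibre of the family $\cV$, and then to deduce étaleness and the freeness assertions from multiplicity one. First I would invoke the control/base-change properties of $\fOrd^{\bar{d}}_{\varpi-\ad}$ from \cite{spiess2}: on the dual side $\cM=\cD(\cV)$ is $\La_E$-free of rank $r_0$, and its specialisation $\cM/\fa\cM=\cD(\cV[\fa])$ at the augmentation ideal is identified, Hecke- and $\barT_\fp$-equivariantly, with the dual of the $\fp$-ordinary part of the weight-$2$ cohomology of $\barX$ at level $K_0(\fn)^{\fp}$. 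Because $\pi'_\fp\cong\St_{G_\fp}(\bC)$ the eigensystem of $\pi'$ is $\fp$-ordinary, so by Proposition \ref{prop:harder} applied to $\barD$ (legitimate since $\bar{d}\le 1$) the $(\pi',\barep)$-part of $\cV[\fa]$ is one-dimensional over $E$. This one-dimensional piece corresponds to a closed point $x_0=\fm_0\in|\fX|$ lying over $\fa$ with $k(x_0)=E$ and $\la_{x_0}=\la_{\pi'}=\la_\pi$; and $\chi_{x_0}=1$ because lying over $\fa$ trivialises the tautological part of $\chi$ on $U_\fp$, while the $\fp$-ordinary line in $\St_{G_\fp}$ has trivial $t_0$-action in the normalisation of \cite{spiess2}.

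For the remaining parts of (a) I would work with $R_{\fm_0}$ and $\cM_{\fm_0}$. The computation above gives $\dim_E\cM_{\fm_0}/\fm_0\cM_{\fm_0}=1$, so by Nakayama $\cM_{\fm_0}$ is cyclic over $R_{\fm_0}$, say $\cM_{\fm_0}\cong R_{\fm_0}/I$; it is also $\La_E$-torsion-free (a localisation of the free module $\cM$) and finite over the discrete valuation ring $\La_{E,\fa}$, hence $\La_{E,\fa}$-free, of rank $\dim_E\cM_{\fm_0}/\fa\cM_{\fm_0}=1$. Since $R$ acts faithfully on $\cM$, $R_{\fm_0}$ acts faithfully on $\cM_{\fm_0}$, so $R_{\fm_0}\hookrightarrow\End_{\La_{E,\fa}}(\cM_{\fm_0})\cong\La_{E,\fa}$; this embedding contains the image of the structural map $\La_{E,\fa}\to R_{\fm_0}$, so it is an isomorphism. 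Thus $R_{\fm_0}=\La_{E,\fa}$, $q$ is étale at $x_0$, and $\cM_{\fm_0}$ is free of rank $1$ over $R_{\fm_0}$.

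To prove (b), note that since $\fm_0 R_{\fm_0}=\fa R_{\fm_0}$ and $R_{\fm_0}=\La_{E,\fa}$ we get $\cM_{\fm_0}/\fm_0^2\cM_{\fm_0}\cong R_{\fm_0}/\fm_0^2R_{\fm_0}=\La_E/\fa^2$, and the isomorphism $U_\fp/V\cong\bZ_p$ fixed in Lemma \ref{lemma:linvkey} identifies $\La_E/\fa^2$ with the dual numbers $\wE=E[\vep]$ (a topological generator going to $1+\vep$). Dualising, $\cV[\fm_0^2]\cong\wE(\theta_0)$ as $E$-Banach representations of $\barT_\fp$, where $\theta_0:=(\chi\bmod\fm_0^2)\colon F_\fp^*\to\wE^*$ and the $\wE$-structures match. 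Writing $\theta_0=1+\ell\vep$ with $\ell\in\Hom_{\cont}(F_\fp^*,E)$, its restriction to $U_\fp$ is the tautological first-order deformation coming from $\La=\cO[\![U_\fp/V]\!]$, so $\ell|_{U_\fp}=\psi|_{U_\fp}$ for the compatible normalisation of $\vep$, whence $\ell=\psi+b\,v_\fp$ for some $b\in E$. It remains to see $b=0$. Here I would use the standing hypothesis $\sL(\pi',\psi)=0$: by Proposition \ref{prop:linvinlin2} applied to $\barX$ (again $\bar{d}\le 1$), the group $\bH^{\bar{d}}_{\varpi-\ad}(\barX^{\an};\wE(\Theta_\psi),E)_{\pi',\barep}$ is $\wE$-free of rank one; using \eqref{derham4} for $\barX$ together with the description in \cite{spiess2} of this group in terms of $\cV$ (continuous parabolic induction being adjoint to $\fOrd^{\bar{d}}_{\varpi-\ad}$), a direct computation on the two-dimensional space $\cV[\fm_0^2]\cong\wE(\theta_0)$ shows that this module can be $\wE$-free of rank one only when $\theta_0=\Theta_\psi$ (the normalisation pinning down the sign of $\vep$ and excluding the only other candidate $\theta_0=\Theta_\psi^{-1}$). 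Hence $\cV[\fm_0^2]\cong\wE(\Theta_\psi)$.

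The hard part will be this last identification $\theta_0=\Theta_\psi$. Its restriction to $U_\fp$ is forced by the choice of Iwasawa algebra $\La=\cO[\![U_\fp/V]\!]$, but the value of $\theta_0$ on the uniformiser $t_0$ — i.e.\ the first-order variation of the $U_\fp$-eigenvalue along the $\psi$-direction — is genuine arithmetic information, and it is precisely the vanishing of $\sL(\pi',\psi)$, transported through Proposition \ref{prop:linvinlin2} for $\barX$ and the $\fOrd$-adjunction of \cite{spiess2}, that makes it come out right. By contrast, the control statement identifying $\cV[\fa]$ with classical weight-$2$ cohomology, and the flatness/étaleness bookkeeping in (a), are routine once the cited results of \cite{spiess2} are in hand.
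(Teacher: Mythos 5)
Your overall strategy coincides with the paper's: identify $x_0$ inside $\cV[\fa]$, which is recognized as the ordinary part of the weight-two cohomology of $\barX$ at level prime to $\fp$; get freeness of $\cM_{\fm_0}$ from Nakayama plus faithfulness of the $R$-action; and prove (b) by combining Prop.\ \ref{prop:linvinlin2} for $\pi'$ (using the standing hypothesis $\sL(\pi',\psi)=0$) with the adjunction between continuous parabolic induction and $\fOrd$ and the dimension count $\dim_E\cV[\fm_0^2]=2$. Your version of (b) is slightly more roundabout than the paper's (the paper simply produces an injection $\wE(\Theta_{\psi})\hookrightarrow\cV_{\fm_0}$ from the rank-one freeness of the Hom-module and lands in $\cV[\fm_0^2]$ by $\La_E$-linearity, rather than first writing $\cV[\fm_0^2]\cong\wE(\theta_0)$ and pinning down $\theta_0$), but the explicit Hom computation you sketch does work and uses the same inputs.

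There is, however, a genuine gap in (a), exactly at the step that produces \'etaleness. What your multiplicity-one argument gives is $\dim_E\cM_{\fm_0}/\fm_0\cM_{\fm_0}=1$ (the honest eigenspace $\cV[\fm_0]$ is one-dimensional). But your computation of the $\La_{E,\fa}$-rank of $\cM_{\fm_0}$ asserts $\dim_E\cM_{\fm_0}/\fa\cM_{\fm_0}=1$, i.e.\ that the full localization $\cV[\fa]_{\fm_0}$ (the generalized eigenspace for the Hecke \emph{and} torus action) is one-dimensional. Since $\fa\subseteq\fm_0$, the second statement is strictly stronger than the first, and the passage from one to the other is precisely the content of unramifiedness of $q$ at $x_0$: a nontrivial Jordan block in $\cV[\fa]$ above the eigensystem $(\la_{\pi},\chi=1)$ would make $\cM_{\fm_0}/\fa\cM_{\fm_0}$ bigger and $q$ ramified, with no contradiction to eigenspace multiplicity one. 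The paper closes this by proving that the $R$-action on $\cV[\fa]$ is \emph{semisimple}: it decomposes $H^{\bar d}(\barX^{\an}_{K_0(1)},\cO)^{\ord}_{E,\barep}$ into automorphic eigensystems via (\cite{spiess2}, Lemma 3.25 and Prop.\ 2.22), each contributing a character $(\la_{\wpi},\chi_{\wpi})$ of $\bT_E\otimes\La_{\cO}(\barT_{\fp})_E$, and uses that $\la_{\wpi}$ determines $\wpi$; this yields $\cV[\fa]_{\fm}=\cV[\fm]$ and hence $\fa\cM_{\fm_0}=\fm_0\cM_{\fm_0}$. Your citation of Prop.\ \ref{prop:harder} ``applied to $\barD$'' does not supply this: that proposition computes the localization of cohomology with coefficients in $\St_{G_{\fp},\cont}(\cO)$, not the ordinary part of prime-to-$\fp$-level cohomology together with its $\barT_{\fp}$-module structure, and relating the two at the localized level is essentially the automorphic-decomposition argument you are trying to avoid. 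So you should replace that appeal by the decomposition of $\cV[\fa]$ into eigensystems (or some equivalent semisimplicity statement); once that is in place, the rest of your (a) and your (b) go through.
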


\begin{proof} (a) Since the anti-equivalence of categories $\cD: \Ban_E^{\adm}(\barT_{\fp})\to \Mod_{E}^{\fgaug}(\barT_{\fp})$ is $\La_{\cO}(\barT_{\fp})_E$-linear we have
\[
\cM/\fa \cM \, =\, \cM\otimes_{\La_E, \aug} E \, \cong \, \cD(\cV[\fa]).
\]
By (\cite{spiess2}, Prop.\ 5.19 (e)) we have 
\begin{eqnarray*}
\fOrd_{\varpi-\ad}^{V, \bar{d}}(\barX^{\an}, \cO)_E[\fa] & = & \Hom_{E[\barT_{\fp}]}(\cInd_{\barT_{\fp}^0}^{\barT_{\fp}} E, \fOrd_{\varpi-\ad}^{V, \bar{d}}(\barX^{\an}, \cO)_E)\\
&= &\bH_{\varpi-\ad}^{\bar{d}}(\barX^{\an}; \cInd_{\barT_{\fp}^0}^{\barT_{\fp}} E, E)\\
&=& \Hom_{E[\barT_{\fp}]}(\cInd_{\barT_{\fp}^0}^{\barT_{\fp}} E, \fOrd_{\varpi-\ad}^{T_{\fp}^0, \bar{d}}(\barX^{\an}, \cO)_E) = H^{\bar{d}}(\barX_{K_0(1)}^{\an}, \cO)^{\ord}_E.
\end{eqnarray*}
Let $\barbD$ be $=\barD$ if $\bar{d}=0$ and let $\barbD$ be the totally ramified incoherent quaternion algebra over $\bA$ (in the sense \cite{yuanzhangzhang}) with ramification set $\Ram_{\barbD} = \Ram_{\barD} \cup \{\sigma_1\} = S_{\infty}\cup \Ram_D$ if $\bar{d}=1$. Following (\cite{yuanzhangzhang}, bottom of page 70) for a subfield  $E$ of $\bC$ we define the set $\cA(\barbD^*/\bA^*, E)$ of automorphic representations of $\barbD^*/\bA^*$ over $E$ as the set isomorphism classes of irreducible representations of $\barbD^*/\bA^*$ such that $\pi\otimes_E \bC$ is a sum of automorphic representations of $\barbD^*/\bA^*$ of weight $0$. As in the proof (\cite{spiess2}, Lemma 3.25) we have a decomposition
\begin{equation}
\label{jacshimuraord}
\cV[\fa]\,=\,H^{\bar{d}}(\barX_{K_0(1)}^{\an}, \cO)^{\ord}_{E, \barep}\, =\, \bigoplus_{\wpi\in \cR} \, M_{\wpi, \fp}^{\ord} \otimes_{\cO_{\wpi}} (\wpi^{\fp})^{K_0(\fn)^{\fp}}
\end{equation} 
where $\cR$ denotes the set of $\wpi\in \cA(\barbD^*/\bA^*, E)$ such that $\wpi_{\fp}^{K(0)}\ne 0 \ne (\wpi^{\fp})^{K_0(\fn)^{\fp}}$
and $M_{\wpi, \fp}^{\ord}\ne 0$. Here -- as in the proof of (\cite{spiess2}, Lemma 3.25) -- $M_{\wpi, \fp}$ denotes a $T_{\fp}^+$-stable $\cO_{\wpi}$-lattice in $\wpi_{\fp}^{K_1(n)}$. Moreover, by (\cite{spiess2}, Prop.\ 2.22) there exists an unramified quasicharacter $\chi_{\wpi}: F_{\fp}^* \to \cO^*$ such that $M_{\wpi, \fp}^{\ord} \cong \cO_{\wpi}(\chi_{\wpi})$ for every $\wpi\in \cR$. Thus we get 
\begin{equation}
\label{jacshimuraord2}
\cV[\fa]\,=\, \bigoplus_{\wpi\in \cR} \, E_{\wpi}(\chi_{\wpi}) \otimes_{E_{\wpi}} (\wpi^{\fp})^{K_0(\fn)^{\fp}}.
\end{equation} 
In particular the action of $\bT_E\otimes_E \La_{\cO}(\barT_{\fp})_E$, hence also of $R$, on $\cV[\fa]$ is semisimple. Since $R/\fa R$ is a finite $\Lambda_E/\fa=E$-algebra we conclude 
\[
\cV[\fa] \, \cong \, \bigoplus_{\fm\in \Spec R/\fa R} \cV[\fm]
\]
(here and in the following we view $\Spec R/\fa R$ as a subset of $|\fX|$). In particular the localization $\cV[\fa]_{\fm}$ is equal to $\cV[\fm]$ for every $\fm\in \Spec R/\fa R$. 
Applying again the duality functor $\cD$ yields
\[
\cM_{\fm}/\fa \cM_{\fm} \,= \, \cD(\cV[\fa])_{\fm} \, =\, \cD(\cV[\fm])\, =\, \cM_{\fm}/\fm \cM_{\fm} \, = \, \cM\otimes_R R/\fm
\]
hence
\begin{equation}
\label{semisimplepi}
\fa \cM_{\fm} \,=\, \fm \cM_{\fm}
\end{equation}
for every $\fm \in \Spec R/\fa R\subset |\fX|$. Since $\ff(\pi') =\ff(\pi) = \fp\fn$ and $\pi'_{\fp} = \pi_{\fp} = \St_{G_{\fp}}(E)$ the automorphic representation $\pi$ corresponds to an element of $\cR$ that (by abuse of notation) will also be denoted by $\pi$. Since $\dim_E((\pi^{\fp})^{K_0(\fn)^{\fp}})=1$ the $R$-submodule 
\[
\{ v\in \cV[\fa]\mid t\cdot v= \la_{\pi}(h)v, \, t \cdot v = v \,\forall \, h\in \bT_E,t\in \barT_{\fp}\}
\] 
of $\cV[\fa]$ is onedimensional, so it is of the form $\cV[\fm_0]$ for a unique ideal $\fm_0 \in \Spec R/\fa R$, i.e.\ a unique point $x_0\in \fX$ lying above $\fa$. It follows $k(x_0) = E$ and $\dim_E(\cM\otimes_R R/\fm_0) = 1$. By Nakayama's Lemma we obtain
\[
\cM_{\fm_0} \cong \, R_{\fm_0}/\Ann_{R_{\fm_0}}(\cM_{\fm_0})\, \cong \, (R/\Ann_R(\cM))_{\fm_0}.
\]
However since $\cM$ is a faithful $R$-module we have $\Ann_R(\cM)=0$, hence $\cM_{\fm_0}$ is a free $R_{\fm_0}$-module of rank $1$. The equality \eqref{semisimplepi} for $\fm =\fm_0$ now yields $\fa R_{\fm_0} = \fm_0 R_{\fm_0}$, i.e.\ $\fX \to \Spec \La_E$ is unramified, hence {\'e}tale in $\fm_0=x_0$.

(b) Firstly, we remark that
\[
\Hom_{E[\barT_{\fp}]}( \wE(\Theta_{\psi}), \fOrd_{\varpi-\ad}^{V, \bar{d}}(\barX^{\an}, \cO)_E)\, \cong\, \bH_{\varpi-\ad}^{\bar{d}}(\barX^{\an}; \wE(\Theta_{\psi}), E)
\]
by (\cite{spiess2}, Prop.\ 3.19 (e)). Together with Prop.\ \ref{prop:linvinlin2} it follows that 
$\Hom_{E[\barT_{\fp}]}( \wE(\Theta_{\psi}), \cV_{\fm_0})$
is a free $\wE$-module of rank 1, so there exists a monomorphism of $E[\barT_{\fp}]$-modules
\begin{equation}
\label{linvdeform}
\wE(\Theta_{\psi}) \lra \cV_{\fm_0}.
\end{equation}
The source carries a canonical $\La_E$-module structure and it is easy to see that \eqref{linvdeform} is $\La_E$-linear and that $\fa^2 \wE(\Theta_{\psi})=0$. It follows that the image of \eqref{linvdeform} is contained in $\cV[\fa^2]_{\fm_0}$. By (a) we have 
\[
\cM_{\fm_0}/\fa^2\cM_{\fm_0} = \cM_{\fm_0}/\fm_0^2\cM_{\fm_0}= \cM/\fm_0^2\cong R/\fm_0^2 \cong \La_E/\fa^2
\]
hence $\cV_{\fm_0}[\fa^2]\, =\, \cV[\fm_0^2]$ and $\dim_E(\cV[\fm_0^2]) =2$. Therefore \eqref{linvdeform} can be viewed as an isomorphism $\wE(\Theta_{\psi}) \lra \cV[\fm_0^2]$. 
\end{proof}

A point $x\in |\fX|$ will be called {\it classical} if $\chi_x\ne 1$ is a quasicharacter and if 
there exists a $\pi(x)\in \cA(\bD^*/\bA^*, E)$
\medskip

\noi (i) The  $\pi(x)$ is defined over $k(x)$, i.e.\ there exists $\iota: E_{\pi(x)}:=\End_E(\pi(x))\cong k(x)$;
\medskip

\noi (ii) The conductor of $\pi(x)^{\fd\fp, \infty}$ divides $\fn$;
\medskip

\noi (iii) $\pi(x)_{\fp}$ is the principal series representation $\pi(x)_{\fp} = \pi(\chi_x^{-1}| \wcdot |_{\fp}^{-1/2}, \chi_x| \wcdot |_{\fp}^{1/2})$;
\medskip

\noi (iv) The Hecke eigenvalue homomorphism $\la_{\pi(x)}: \bT \to E_{\pi(x)}\stackrel{\iota}{\cong} k(x)$ is equal to $=\la_x$.

The set of classical points of $\fX$ will be denoted by $\fX_{\cl}$.

\begin{lemma}
\label{lemma:classicalpoints}
$\fX_{\cl}$ is dense in $\fX$.
 \end{lemma}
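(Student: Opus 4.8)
The plan is to show that classical points are Zariski-dense in $\fX = \Spec R$ by a standard density argument for eigenvarieties, exploiting that $\fX$ is finite flat over the one-dimensional regular base $\Spec \La_E$ with $\La_E \cong \cO[\![T]\!]_E$ a principal ideal domain, so that $\fX$ is a curve whose irreducible components each surject onto $\Spec \La_E$. Since on a Noetherian scheme a subset is dense iff it meets every irreducible component in a dense (equivalently, infinite, since the components are one-dimensional) subset, it suffices to produce infinitely many classical points on each component, or more simply to show that the non-classical locus is contained in a proper closed (hence finite) subset.

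First I would identify, for a point $x \in |\fX|$ lying above a point $\chi \in \Spec \La_E$ corresponding to a continuous character $\chi \colon U_\fp/V \to k(x)^*$ (extended to $F_\fp^*$ via the chosen splitting and an unramified part coming from $R$), the Banach representation $\cV[\fm_x]$ or rather the relevant Hecke eigenspace, and relate it via the duality functor $\cD$ and (\cite{spiess2}, Prop.\ 5.19) to a space of the form $\bH_{\varpi-\ad}^{\bar d}(\barX^{\an}; E(\chi_x^{\pm 1}), k(x))$-type cohomology. The key input is a classicality/control theorem: for all but finitely many $\chi$ — concretely, excluding $\chi = 1$ and the characters through which the action is ``too singular'' — this cohomology computes, via an $\fOrd$-vs-$H^\ast$ comparison as in the displayed computation in the proof of Lemma \ref{lemma:piinfx}(a), the ordinary part of the cohomology of a Shimura variety $\barX_{K}$ at a finite level $K$ depending on the conductor of $\chi$, and the latter decomposes (exactly as in \eqref{jacshimuraord}, \eqref{jacshimuraord2}) as a direct sum over automorphic representations $\wpi$ with $\wpi_\fp$ a ramified principal series $\pi(\chi^{-1}|\cdot|^{-1/2}, \chi|\cdot|^{1/2})$. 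Each such $\wpi$ occurring in the eigenspace cut out by a point $x$ above $\chi$ then furnishes the datum $\pi(x)$ satisfying (i)--(iv), so $x$ is classical. I would phrase this as: the set of $\chi \in \Spec \La_E$ over which every point of $\fX$ is classical contains the complement of a finite set.

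Granting this, density follows: let $Z \subseteq \fX$ be the Zariski closure of $\fX_{\cl}$. If $Z \ne \fX$, then since $\fX$ is equidimensional of dimension $1$ (being finite flat over the $1$-dimensional $\Spec \La_E$ with no embedded components, as $R$ is a torsionfree hence flat $\La_E$-module that is even $\La_E$-free on components), $Z$ would miss at least one irreducible component $\fX_0$, or meet it in finitely many points. But $q\colon \fX \to \Spec \La_E$ is finite, so $q(\fX_0)$ is a closed, hence cofinite, subset of $\Spec \La_E$; it therefore meets the cofinite ``good'' locus above which all points are classical, and above any such good point $\chi$ the fiber $q^{-1}(\chi) \cap \fX_0$ is a nonempty (finite flat fibers are nonempty) set of classical points. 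Hence $\fX_{\cl} \cap \fX_0$ is infinite, contradicting $\dim \fX_0 = 1$ and $Z \cap \fX_0$ finite. Therefore $Z = \fX$.

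The main obstacle is the classicality step: one must control, uniformly in the varying ramified character $\chi$, the interpolation of the $\fOrd$-cohomology by finite-level ordinary cohomology and identify the local component at $\fp$ of the contributing automorphic representations as the expected principal series rather than (a twist of) Steinberg or supercuspidal — this is where the hypothesis $U_\fp/V \cong \bZ_p$ and the structure of $\cV$ as a finite flat $\La_E$-module (Prop.\ 5.19(b), 5.20 of \cite{spiess2}) are essential, and where I would lean on the $p$-adic Langlands/classicality results packaged in \cite{spiess2} together with the argument pattern already used for the point $x_0$ in Lemma \ref{lemma:piinfx}. The bookkeeping of which finitely many $\chi$ must be excluded (those where $\fX \to \Spec \La_E$ ramifies, or where the local representation degenerates) is routine once the control theorem is in place.
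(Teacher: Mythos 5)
Your density skeleton (each irreducible component of $\fX$ is one-dimensional and, by finiteness and torsionfreeness of $R$ over the principal ideal domain $\La_E$, dominates $\Spec \La_E$; hence infinitely many classical points on each component force density) is sound and is essentially the unwinding of the paper's one-line density assertion. The genuine gap is in your classicality input. You claim that ``the set of $\chi\in\Spec\La_E$ over which every point of $\fX$ is classical contains the complement of a finite set,'' equivalently that the non-classical locus is contained in a proper closed subset. This is false. A classical point $x$ requires an automorphic representation $\pi(x)$ whose component at $\fp$ is the \emph{smooth} principal series $\pi(\chi_x^{-1}|\wcdot|_{\fp}^{-1/2},\chi_x|\wcdot|_{\fp}^{1/2})$, so $\chi_x$ must be a smooth quasicharacter, i.e.\ its restriction to $U_{\fp}$ must have open kernel, i.e.\ be of finite order on $U_{\fp}/V\cong\bZ_p$. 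The closed points of $\Spec\La_E$ corresponding to finite-order characters form a countable (dense, but far from cofinite) subset; over every infinite-order character no point of $\fX$ can be classical. Consequently the non-classical locus is itself dense, and no control theorem of the shape you propose (interpolation by finite-level ordinary cohomology over all but finitely many $\chi$, with level ``depending on the conductor of $\chi$'') can exist: an infinite-order character has no conductor and no associated finite level.

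The argument is repaired by running your component-counting density step over the correct infinite family of good weights rather than a cofinite locus, which is exactly what the paper does. For $n\ge 1$ let $U_n\subseteq T_{\fp}^0$ be the open subgroup of index $p^n$ containing $V$ and $\fa_n=\Ker(\La_E\to \cO[T_{\fp}^0/U_n]_E)$; since $\bigcap_n\fa_n=0$, the fibers of $q$ over the (finite-order) points of $V(\fa_n)$, $n\ge1$, are infinite in number on each component, hence dense in $\fX$ even after removing the finitely many points above $\fa$. Over such a weight one has literally $\cV[\fa_n]=H^{\bar{d}}(\barX^{\an}_{K_{U_n}(n)},\cO)^{\ord}_{E,\barep}$, which decomposes into automorphic eigenspaces as in \eqref{jacshimuraord2}; the $R$-action on it is semisimple, so $\cV[\fa_n]\cong\bigoplus_{\fm\in\Spec R/\fa_n R}\cV[\fm]$, and each $\cV[\fm]\ne0$ because $\cM$ is a faithful $R$-module. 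A point $\fm$ above $\fa_n$ but not above $\fa$ therefore singles out a unique $\wpi$ with $\chi_{\wpi}$ ramified, and (\cite{spiess2}, Prop.\ 2.22) then forces $\wpi_{\fp}$ to be the ramified principal series, excluding the Steinberg/special possibility that can occur only over $\fa$ (as at $x_0$); hence $\fm$ is classical. Your proposal gestures at the right ingredients (the $\fOrd$ versus finite-level comparison and the eigenspace decomposition from the proof of Lemma \ref{lemma:piinfx}(a)), but as stated the quantifier ``all but finitely many $\chi$'' is not just unproved, it is wrong, and fixing it requires replacing the cofinite good locus by the countable set of finite-order weights as above.
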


\begin{proof} For $n\ge 1$ let $U_n$ be the open subgroups of $T_{\fp}^0$ of index $p^n$ containing $V$. Put $\La_n = \cO[T_{\fp}^0/U_n]$ and $\fa_n :=\Ker(\La_E\to (\La_n)_E)$. Since $\bigcap_{n\ge 1} \fa_n = 0$, the set $(\bigcup_{n \ge 1}\Spec R/\fa_n R)\setminus \Spec R/\fa R$ is dense in $\fX$, so it suffices to see that it consists of classical points. 
Similar to \eqref{jacshimuraord}, \eqref{jacshimuraord2} one shows that for fixed $n\ge 1$ we have
\begin{equation*}
\label{jacshimuraord3}
\cV[\fa_n]\,=\,  H^{\bar{d}}(\barX_{K_{U_n}(n)}^{\an}, \cO)^{\ord}_{E, \barep}\, \cong \, \bigoplus_{\wpi\in \cR} \, E_{\wpi}(\chi_{\wpi}) \otimes_{E_{\wpi}} (\wpi^{\fp})^{K_0(\fn)^{\fp}}
\end{equation*} 
where $\cR$ denotes the set of $\wpi\in \cA(\bD^*/\bA^*, E)$ such that $\wpi_{\fp}^{K_{U_n}(n)}\ne 0 \ne (\wpi^{\fp})^{K_0(\fn)^{\fp}}$ and $M_{\wpi, \fp}^{\ord}\ne 0$. As in the proof of Lemma \ref{lemma:piinfx} (a) this implies that the 
action of $R$ on $\cV[\fa_n]$ is semisimple and that 
\[
\cV[\fa] \, \cong \, \bigoplus_{\fm\in \Spec R/\fa_n R} \cV[\fm].
\]
Let $\fm \in \Spec R/\fa_n R\setminus \Spec R/\fa R$. We have $\cV[\fm]\ne 0$ (since $\Ann_R(\cM)=0$ hence $\cD(\cV[\fm])=\cM/\fm\cM\ne 0$), so there exists a unique\footnote{Since the Hecke eigenvalue homomorphism $\la_{\wpi}$ determines $\wpi$ uniquely.} $\wpi \in \cR$ such that 
\[
\cV[\fm]= E_{\wpi}(\chi_{\wpi}) \otimes_{E_{\wpi}} (\wpi^{\fp})^{K_0(\fn)^{\fp}}=\{ v\in \cV[\fa]\mid t\cdot v= \la_{\wpi}(h)v, \, t \cdot v = \chi_{\wpi}(t)v \,\forall \, h\in \bT_E,\, t\in \barT_{\fp}\}.
\] 
The condition $\fm \not \in \Spec R/\fa R$ implies that the quasicharacter $\chi_{\wpi}$ is ramified hence $\wpi_{\fp}=\Ind_{B_{\fp}}^{G_{\fp}} \chi_{\wpi}^{-1} = \pi(\chi_{\wpi}^{-1}| \wcdot |_{\fp}^{-1/2}, \chi_{\wpi}| \wcdot |_{\fp}^{1/2})$ by (\cite{spiess2}, Prop. 2.22). It follows that $\fm$ is classical.
\end{proof}

We fix a character $\ep:\Delta\to\{\pm 1\}$ and consider the cohomology group $\bH^d(X^{\an}; \cV, E)_{\ep}$. It is a finitely generated augmented $E[\barT_{\fp}]$-module equipped with a $\bT_R= \bT_E\otimes_E R$-action (the $R$-action is induced by the $R$-action on $\cV$). For a $\bT_R$-module $N$ we let $N_{\la}$ be the maximal quotient of $N$ where $\bT_E$ acts via $\la:\bT_E\to R$, i.e.\ $N_{\la} = N\otimes_{\bT_R, \la\otimes \id_R} R$. 

\begin{lemma}
\label{lemma:jacquetlanglandsfam}
Let $\cN \colon \,=\, (\bH_{\varpi-\ad}^d(X^{\an}; \cV, E)_{\ep})_{\la}$.

\noi (a) The $R$-module $\cN$ is finitely generated.

\noi (b) There exists an open neighbourhood $\fU$ of $x_0$ in $\fX$ such that $\fU \hra \fX \to \Spec \La_E$ is {\'e}tale and $\wcN|_{\fU}$ is an invertible $\cO_{\fU}$-module.\footnote{Recall that $\widetilde{M}$ denotes the quasicoherent $\cO_{\fX}$-module associated to an $R$-module $M$.}

\noi (c) The canonical map $(\bH_{\varpi-\ad}^d(X^{\an}; \cV, E)\otimes_R k(x_0))_{\pi, \ep}\to \cN\otimes_R k(x_0)$ is an isomorphism of onedimensional $E$-vector spaces. Moreover we have $\bH_{\varpi-\ad}^{d+1}(X^{\an}; \cV, E)[\fm_0]_{\pi, \ep}=0$. \end{lemma}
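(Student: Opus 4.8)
The plan is a control/interpolation argument over $\fX$ in the spirit of Hida theory: I will show that $\cN$ is coherent over $\fX$, invertible near $x_0$, and that its fibre at $x_0$ recovers the one-dimensional $(\pi,\ep)$-part of the degree-$d$ cohomology. The inputs are the finiteness of the cohomology of the $\sS$-variety $X$ with coefficients in the ordinary family $\cV$, the local structure of $\fX$ at $x_0$ (Lemma \ref{lemma:piinfx}), Proposition \ref{prop:harder}, and the density of the classical locus $\fX_{\cl}$ (Lemma \ref{lemma:classicalpoints}). For part (a): since $\cM=\cD(\cV)$ is free of finite rank over $\La_E$ and $\bH_{\varpi-\ad}^{\bu}(X^{\an};\cV,E)$ is computed by a finite complex of augmented $E[\barT_{\fp}]$-modules built out of $\cM$ (so, by the finiteness results of \cite{spiess2}, it lies in $\Mod_E^{\fgaug}(\barT_{\fp})$), and since the action of $\La_{\cO}(\barT_{\fp})_E$ on it factors through a finite $\La_E$-algebra (a quotient of $R$), each $\bH_{\varpi-\ad}^n(X^{\an};\cV,E)$ is finitely generated over $\La_E$, hence over $R$; the functors $(-)_{\ep}$ and $(-)_{\la}$ are quotients, so $\cN$ is finitely generated over $R$.

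For (b), by Lemma \ref{lemma:piinfx}(a) the morphism $q\colon\fX\to\Spec\La_E$ is {\'e}tale at $x_0$, so I fix a connected affine open $\fU_0\ni x_0$ with $q|_{\fU_0}$ {\'e}tale; then $R(\fU_0)$ is a one-dimensional regular domain, $R_{\fm_0}$ is a discrete valuation ring whose maximal ideal is generated by $\fa$, and $R/\fm_0^n\cong\La_E/\fa^n$ for all $n$. Over the reduced Noetherian scheme $\fU_0$ it then suffices to check that the generic rank of $\cN$ equals the fibre dimension $\dim_{k(x_0)}(\cN\otimes_R k(x_0))$ and that this common value is $1$. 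The fibre at $x_0$ is the content of (c). The generic rank I would compute by evaluating $\cN$ at classical points $x\in\fU_0$ (available by density): using the multiplicity-one decomposition \eqref{jacshimuraord2} together with the base-change statement below, $\cN\otimes_R k(x)$ is the $(\pi(x),\ep)$-part of the cohomology of $X$ with coefficients in the ramified principal series $\Ind_{B_{\fp}}^{G_{\fp}}\chi_x^{-1}$, which is one-dimensional because $\pi(x)$ contributes a one-dimensional space at the relevant $\fp$-level and its contribution to degree $d$ of the associated $d$-dimensional arithmetic quotient is one-dimensional. Hence the generic rank is $1$ and $\wcN$ is invertible on a neighbourhood $\fU\subseteq\fU_0$ of $x_0$, which is then the required open set.

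For (c), I would first run a dévissage: the exact sequences $0\to\cV[\fm_0]\to\cV[\fm_0^n]\to\cV[\fm_0^{n-1}]\to0$, the identifications $\cV[\fm_0]\cong E(0)$ and $\cV[\fm_0^2]\cong\wE(\Theta_{\psi})$ of Lemma \ref{lemma:piinfx}, and Proposition \ref{prop:harder}(c) (which gives $\bH_{\varpi-\ad}^i(X^{\an};E(0),E)_{\pi,\ep}=E$ for $i=d,d+1$ and $0$ otherwise) show by induction that $\bH_{\varpi-\ad}^i(X^{\an};\cV[\fm_0^n],E)_{\pi,\ep}=0$ for $i<d$. Feeding this into the base-change (control) exact sequence for the complex of $\La_E$-modules computing $\bH_{\varpi-\ad}^{\bu}(X^{\an};\cV,E)$ identifies $\bigl(\bH_{\varpi-\ad}^d(X^{\an};\cV,E)\otimes_R k(x_0)\bigr)_{\pi,\ep}$ with $\bH_{\varpi-\ad}^d(X^{\an};E(0),E)_{\pi,\ep}=E$, provided one knows $\bH_{\varpi-\ad}^{d+1}(X^{\an};\cV,E)[\fm_0]_{\pi,\ep}=0$. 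This last vanishing I would obtain by interpolation: $\bH_{\varpi-\ad}^{d+1}(X^{\an};\cV,E)_{\pi,\ep}$ is finitely generated over $R$ and has vanishing $\fm_x$-torsion at every classical point $x$ (there $\cV[\fm_x]$ is a ramified principal series of $T_{\fp}$, so the cohomology sits on a genuine $d$-dimensional arithmetic quotient on which a parallel-weight-$2$ cuspidal contribution is concentrated in degree $d$), and combining density with the dévissage bound $\dim_E\bigl(\bH_{\varpi-\ad}^d(X^{\an};\cV[\fm_0^n],E)_{\pi,\ep}\bigr)\le n$ excludes torsion at $x_0$, so $\bH_{\varpi-\ad}^{d+1}(X^{\an};\cV,E)_{\pi,\ep}$ is $\La_E$-torsion free near $x_0$. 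Finally, $k(x_0)=E$ and $\la_{x_0}=\la_{\pi}$ make the functors $(-)_{\la}\otimes_R k(x_0)$ and $(-\otimes_R k(x_0))_{\pi}$ agree on finite $R$-modules, turning the canonical surjection into the asserted isomorphism of one-dimensional $E$-vector spaces.

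The hard part will be the control statement used in (c): that forming the fibre at $x_0$ commutes with base change, equivalently that $\bH_{\varpi-\ad}^{d+1}(X^{\an};\cV,E)[\fm_0]_{\pi,\ep}=0$. Finite generation alone does not give this; one must propagate the degree-$(d+1)$ vanishing from the dense classical locus (Lemma \ref{lemma:classicalpoints}, using the concentration of discrete-series cohomology in degree $d$ together with the dévissage bound above), or else appeal to a Poincar{\'e} self-duality of the middle-degree cohomology of $X$ after localization at the cuspidal Hecke eigensystem of $\pi$, which would force that cohomology to be reflexive, hence free, over the discrete valuation ring $R_{\fm_0}$.
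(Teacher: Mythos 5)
Your part (a) is fine, and your general framework (control sequences over the one-dimensional regular base near $x_0$, classical points via Lemma \ref{lemma:classicalpoints}, dimension counts at $x_0$ via Proposition \ref{prop:harder}) is the right one. But the logical core of your argument has a genuine gap, precisely at the point you flag yourself: the vanishing $\bH_{\varpi-\ad}^{d+1}(X^{\an};\cV,E)[\fm_0]_{\pi,\ep}=0$. Your proposed ``interpolation'' cannot produce it. The module $\bH_{\varpi-\ad}^{d+1}(X^{\an};\cV,E)_{\pi,\ep}$ is localized at the eigensystem $\la_{\pi}$, and since $\pi_{\fp}$ is Steinberg while every classical point $x$ has $\chi_x\neq 1$ ramified (so $\la_x\neq\la_{\pi}$), this module has trivial fibre at every classical point for trivial reasons; its torsion could perfectly well be supported exactly at $x_0$, and density of $\fX_{\cl}$ gives no information about that. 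The d\'evissage bound $\dim_E\bH_{\varpi-\ad}^d(X^{\an};\cV[\fm_0^n],E)_{\pi,\ep}\le n$ only excludes torsion at $x_0$ if you also have a lower bound of the form $\dim_E(\bH_{\varpi-\ad}^d(X^{\an};\cV,E)\otimes_R R/\fm_0^n)_{\pi,\ep}\ge n$, i.e.\ a free rank-one piece through $\fm_0$ --- which is exactly part (b). Since in your write-up (b) is deduced from (c) (``the fibre at $x_0$ is the content of (c)''), this route is circular. Your fallback, a Poincar\'e self-duality forcing reflexivity over $R_{\fm_0}$, is not available in the framework of \cite{spiess2} for these $\varpi$-adic cohomology groups, and in any case freeness of the degree-$d$ module does not by itself bound the cokernel $\bH_{\varpi-\ad}^{d+1}(X^{\an};\cV,E)[\fm_0]$. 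A further, more minor, gap: to pin the generic rank of $\cN$ you assert that the fibre at a classical point is exactly one-dimensional (concentration of the $(\pi(x),\ep)$-part in degree $d$ with ramified principal-series coefficients); the paper only needs, and only proves, the lower bound $\ge 1$ there (via \eqref{derham6} and \cite{spiess2}, Prop.\ 5.13).

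The way out, which is how the paper argues, is to invert your order of deduction so that the $H^{d+1}$-vanishing is an output, not an input. One proves (b) first: after shrinking $\fU$ one kills $\bH_{\varpi-\ad}^{d+1}(X^{\an};\cV,E)[x]$ for all $x\in\fU\setminus\{x_0\}$ simply because this module is finitely generated (so its torsion is supported at finitely many points), giving $\cN\otimes_R k(x)\cong\bH_{\varpi-\ad}^d(X^{\an};k(x)(\chi_x),E)\otimes_{\bT_E,\la_x}k(x)$ and hence $\dim\ge 1$ at classical points; at $x_0$ one uses only the \emph{injectivity} half of the control sequence, $(\bH_{\varpi-\ad}^d(X^{\an};\cV,E)\otimes_R k(x_0))_{\pi,\ep}\hookrightarrow\bH_{\varpi-\ad}^d(X^{\an};\cV[x_0],E)_{\pi,\ep}\cong E$, to get the upper bound $\le 1$ without any statement about $H^{d+1}$ at $x_0$; semicontinuity of fibre dimension plus density of $\fX_{\cl}$ then gives constant fibre dimension $1$, hence invertibility of $\wcN|_{\fU}$. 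Then (c), including $\bH_{\varpi-\ad}^{d+1}(X^{\an};\cV,E)[\fm_0]_{\pi,\ep}=0$, follows from a commutative square comparing the two rows above: the top map is injective with cokernel $\bH_{\varpi-\ad}^{d+1}(X^{\an};\cV,E)[\fm_0]_{\pi,\ep}$, the right map is an isomorphism and the left map is a surjection of one-dimensional spaces by (b), so all maps are isomorphisms and the cokernel vanishes. If you reorganize your argument along these lines (and drop the unneeded exact-multiplicity claim at classical points), the rest of your d\'evissage and base-change bookkeeping goes through.
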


\begin{proof} (a) follows immediately from the fact that $\bH_{\varpi-\ad}^d(X^{\an}; \cV, E)$ is finitely generated as a $\La_E$- hence also as an $R$-module. 

(b) By Lemma \ref{lemma:piinfx} (a) there exists an open neighbourhood $\fU\subseteq \fX$ of $x_0$ such that $\fU \hra \fX \to \Spec \La_E$ is {\'e}tale and $\wcM|_{\fU}$ is an invertible $\cO_{\fU}$-module. Note that for $x\in |\fU|$ we have 
$\dim_{k(x)} \cV[x]=\dim_{k(x)} \cD(\cV[x]) =  \dim_{k(x)}(\cM\otimes_R k(x)) =1$, hence $\cV(x) = k(x)(\chi_x)$ as $\barT_{\fp}$-representation.

Since $\fU$ is regular of dimension 1, we have an exact sequence 
\begin{equation}
\label{jlf1}
0\to \bH_{\varpi-\ad}^d(X^{\an}; \cV, E)\otimes_R k(x) \to  \bH_{\varpi-\ad}^d(X^{\an}; \cV[x], E) \to \bH_{\varpi-\ad}^{d+1}(X^{\an}; \cV, E)[x]\to 0
\end{equation}
for every $x\in |\fU|$ (as shown in appendix (A3) of \cite{spiess2}). Since the $R$-module $\bH^{d+1}(X^{\an}; \cV, E)$ is finitely generated we may assume -- after shrinking $\fU$ if necessary -- that $\bH^{d+1}(X^{\an}; \cV, E)[x]=0$ for all $x\in \fU-\{x_0\}$. So by applying the functor $N\mapsto N_{\la}$ to the sequence \eqref{jlf1} we get 
\begin{equation*}
\label{jlf2}
\cN\otimes_R k(x) \, \cong \,  \bH_{\varpi-\ad}^d(X^{\an}; k(x)(\chi_x), E)\otimes_{\bT_E, \la_x} k(x) 
\end{equation*}
for all $x\in |\fU|-\{x_0\}$. Let $x\in \fU_{\cl} = \fU \cap \fX_{\cl}$, put $E'=k(x)$ and let $\cO'$ be the valuation ring of $E'$. We claim that 
\begin{equation}
\label{jlf3}
\dim(\cN\otimes_R k(x)) \, \ge \, 1 
\end{equation}
Indeed, by \eqref{derham6} and (\cite{spiess2}, Lemma 3.45) we have
\begin{equation*}
\bH_{\varpi-\ad}^d(X^{\an}; \cV[x], E)_{\ep} \, = \, \bH_{\cO}^d(X^{\an}; \cO'(\chi_x), \cO)_{E, \ep}\, =\, \\
\bH_{\cO'}^d(X^{\an}; \cO'(\chi_x), \cO')_{E', \ep}.
\end{equation*}
so by applying $\wcdot \otimes_{\bT_{E'}, \la_{\pi(x)}} E'$ we obtain \eqref{jlf3} by (\cite{spiess2}, Prop.\ 5.13).

For the point $x_0$ we have 
\begin{equation}
\label{jlf4}
\dim_{k(x_0)}(\cN\otimes_R k(x_0)) \, \le \, 1.
\end{equation}
For that note that since $\bH_{\varpi-\ad}^d(X^{\an}; \cV[x_0], E)\cong H_{\cO}^d(X^{\an}; \cO(0), \cO)_E$, the $E$-vector space
\begin{equation*}
\label{jlf5}
\bH_{\varpi-\ad}^d(X^{\an}; \cV[x_0], E)_{\pi, \ep} \, =\, (\bH_{\varpi-\ad}^d(X^{\an}; \cV[x_0], E)_{\ep})\otimes_{\bT_E, \la_{\pi}} E
\end{equation*}
is onedimensional by. Since the sequence \eqref{jlf1} for $x=x_0$ remains exact after localizing with respect to $\fm_{\pi} = \Ker(\la_{\pi})$ it follows that
\begin{equation}
\label{jlf6}
\left(\bH_{\varpi-\ad}^d(X^{\an}, \cV, E)\otimes_R k(x_0)\right)_{\pi, \ep}
\end{equation}
is an $E$-vector space of dimension $\le 1$. This implies \eqref{jlf4} since $\cN\otimes_R k(x_0)$ is the maximal semistable quotient of \eqref{jlf6}.

The two inequalities \eqref{jlf3} and \eqref{jlf4} combined with the facts that $x\mapsto \dim_{k(x)} \cN\otimes_R k(x)$ is upper semi-continuous and $\fU_{\cl}$ is dense in $\fU$ yields -- after shrinking $\fU$ if necessary -- that $\dim_{k(x)} (\cN\otimes_R k(x))=1$ for every $x\in \fU$. Hence $\wcN|_{\fU}$ is an invertible $\cO_{\fU}$-module.

For (c) consider the diagram 
\[
\begin{CD}
\left( \bH_{\varpi-\ad}^d(X^{\an}; \cV, E)\otimes_R k(x_0)\right)_{\pi, \ep} @>>> \bH_{\varpi-\ad}^d(X^{\an}; \cV[x], E)_{\pi, \ep}\\
@VVV@VVV\\
\cN\otimes_R k(x_0) @>>> \bH_{\varpi-\ad}^d(X^{\an}, \cV[x], E)\otimes_{\bT_E, \la_{\pi}} E.
\end{CD}
\]
The upper horizontal map is injective with cokernel $\cong \bH_{\varpi-\ad}^{d+1}(X^{\an}; \cV, E)[\fm_0]_{\pi, \ep}$. 
As already seen in the proof of (b) the first vertical map is surjective and the second is an isomorphism. Hence the  first vertical map is injective as well. The fact that $\dim_{k(x_0)}(\cN\otimes_R k(x_0))=1=\dim_E(\bH_{\varpi-\ad}^d(X^{\an}; \cV[x], E)_{\pi, \ep})$ implies that all maps are isomorphisms and $\bH_{\varpi-\ad}^{d+1}(X^{\an}; \cV, E)[\fm_0]_{\pi, \ep}\break =0$.
\end{proof}

\begin{proof}[Proof of Lemma \ref{lemma:linvkey}] There exists an exact sequence
\begin{eqnarray*}
&& 0\lra \left(\bH_{\varpi-\ad}^d(X^{\an}; \cV, E)\otimes_R R/\fm_0^2\right)_{\pi, \ep} \lra  \bH_{\varpi-\ad}^d(X^{\an}; \cV[\fm_0^2], E)_{\pi, \ep}\hspace{2cm}\\
&& \hspace{4cm} \lra \bH_{\varpi-\ad}^{d+1}(X^{\an}; \cV, E)[\fm_0^2]_{\pi, \ep}\lra 0.
\end{eqnarray*}
(see \cite{spiess2}, appendix (A3)). The third group vanishes by Lemma \ref{lemma:jacquetlanglandsfam} (c). Together with Lemma \ref{lemma:piinfx} (b) we obtain
\[
\bH_{\varpi-\ad}^d(X^{\an}; \wE[\Theta_{\psi}], E)_{\pi, \ep}\, \cong\, \left( \bH_{\varpi-\ad}^d(X^{\an}; \cV, E)\otimes_R R/\fm_0^2\right)_{\pi, \ep}.
\]
Thus by Prop.\ \ref{prop:linvinlin2} it suffices to see that the group on the right 
is a free $\wE$-module of rank 1. For that let $a\in \La_E$ be a generator of the prime ideal $\fa$, put $\cN'= \bH_{\varpi-\ad}^d(X^{\an}; \cV, E)$
and consider the diagram 
\[
\begin{CD}
@. (\cN'\otimes_R R/\fm_0)_{\pi, \ep} @> a\cdot >> (\cN'\otimes_R R/\fm_0^2)_{\pi, \ep} @> \pr >> (\cN'\otimes_R R/\fm_0)_{\pi, \ep} @>>> 0\\
@. @VVV @VVV  @VVV @.\\
0@>>> \cN\otimes_R R/\fm_0 @> a\cdot >> \cN\otimes_R R/\fm_0^2 @> \pr >> \cN\otimes_R R/\fm_0 @>>> 0
\end{CD}
\]
where the vertical maps are induced by the canonical projection $\cN' \to \cN'_{\la} =\cN$. 
By Lemma \ref{lemma:piinfx} (a) and Lemma \ref{lemma:jacquetlanglandsfam} (b), (c) the rows are exact and 
the first and third vertical maps are isomorphisms. Hence the middle vertical map is an isomorphism and we get
\[
\left( \bH_{\varpi-\ad}^d(X^{\an}; \cV, E)\otimes_R R/\fm_0^2\right)_{\pi, \ep}\, \cong\, \cN\otimes_R R/\fm_0^2\, \cong \, R/\fm_0^2\, \cong \, \La_E/\fa^2\,\cong\, \wE.
\]
\end{proof}

 \textsc{Fakult{\"a}t f{\"u}r Mathematik, Universit{\"a}t Bielefeld, Germany} 

 \textit{E-mail} \text{mspiess@math.uni-bielefeld.de}

\end{document}